\newcommand*{\dd}{\mathop{}\!\mathrm{d}}
\newcommand\R{\mathbb{R}}
\newcommand\N{\mathbb{N}}
\newcommand\soe{\geqslant}
\newcommand\ioe{\leqslant}
\newcommand{\fsL}{\textnormal{L}} 
\newcommand{\fsH}{\textnormal{H}} 
\newcommand{\fsW}{\textnormal{W}} 
\newcommand{\init}{\mathrm{in}} 
\newcommand\Ccal{\ensuremath{\mathcal C}}
\newcommand\Ecal{\ensuremath{\mathcal E}}
\renewcommand\a{\alpha}
\renewcommand\phi{\varphi}
\newcommand\eps{\varepsilon}
\newtheorem{theo}{Theorem}
\newtheorem{coro}{Corollary}
\newtheorem{lemm}{Lemma}
\newtheorem{prop}{Proposition}
\newtheorem{rema}{Remark}
\newtheorem*{theo*}{Theorem}
\newtheorem*{coro*}{Corollary}
\newtheorem*{lemm*}{Lemma}
\newtheorem*{conj*}{Conjecture}
\newtheorem*{prop*}{Proposition}
\newtheorem*{defi*}{Definition}
\newtheorem*{rema*}{Remark}
\author{Hector Bouton
  \thanks{Email: \href{bouton@imj-prg.fr}
    {\texttt{bouton@imj-prg.fr}}\\
  Université Paris Cité and Sorbonne Université, CNRS, IMJ-PRG, F-75013 Paris, France}}
\title{Improved convergence rates in the fast-reaction approximation of the triangular Shigesada-Kawasaki-Teramoto system}
\date{}
\begin{document}
\maketitle

\begin{abstract}
  We consider the fast-reaction approximation to the triangular Shigesada-Kawasaki-Teramoto model on a bounded domain in the physical dimension $d\ioe 3$. We provide explicit convergence rates on the whole domain in $\fsL^\infty\fsL^2\cap\fsL^2\fsH^1$ and in the interior we prove convergence with an explicit rate in any $\fsL^\infty\fsH^l$ for all $l>0$.
\end{abstract}

\section{Introduction}
This article deals with the Shigesada-Kawasaki-Teramoto (SKT) system introduced in \cite{shigesada1979} to model two species with competitive interactions. One of its main characteristics is the inclusion of cross-diffusion effects, to take into account the interaction between the two species at the level of diffusion. More precisely, we consider a triangular form of this system that writes down as
\begin{equation}\label{eq:SKT}
  \left\{
  \begin{aligned}
    \partial_t u - \Delta[ (d_u +\sigma v) u ] = f_u (u,v)u, \\
    \partial_t v - d_v\,\Delta v = f_v (u,v)v,               \\
  \end{aligned}
  \right.
\end{equation}
on a smooth bounded domain $\Omega$ of $\R^d$, $d\soe 1$. The quantities $u(t, x), v(t, x)\soe 0$ represent number densities. The parameters $d_u, d_v, \sigma > 0$ are diffusion and cross-diffusion rates. The reaction terms are given by Lotka-Volterra type terms, modelling competition (between the two species and inside each species), they write as
\begin{align}
  f_u(u,v) & = r_u - d_{11}\, u - d_{12}\, v, \\ \label{SKTge2}
  f_v(u,v) & = r_v - d_{21}\, u - d_{22}\, v,
\end{align}
where $r_u, r_v \soe 0$ are reproduction rates and $d_{ij} \soe 0$ for $i, j \in \{1, 2\}$ are parameters measuring the intensity of the competition.\medskip

We complete the system with Neumann boundary conditions
\begin{equation}\label{eq:SKTbc}
  \nabla v \cdot n(x) = 0 \quad \text{ and } \nabla [(d_1+\sigma v) u ] \cdot  n(x) = 0  \quad  \quad \text{ for } (t,x) \in \R_+ \times \partial\Omega,
\end{equation}
and initial  data:
\begin{equation}\label{eq:SKTic}
  u(0, \cdot) = u_\init , \qquad \qquad v(0, \cdot) = v_\init.
\end{equation}

The SKT system has attracted a lot of attention from the mathematical community. It is known to exhibit Turing patterns, see for instance \cite{iidamimura2006}. The presence of cross-diffusion leads to several mathematical challenges.

It is known that this system admits a global solution at least in dimension $d\ioe 4$. More precisely we recall
\begin{prop}[Theorem 1 of \cite{boutondesvillettes2025}]\label{thm:globalSKT}
  Let $d \ioe 4$ and $\Omega \subset \R^d$ be a smooth bounded domain, and let $d_u,d_v, \sigma >0$, $r_u,r_v\ge 0$, $d_{ij} > 0$ for $i,j=1,2$.  We suppose that $u_\init, v_\init \ge 0$ are initial data which lie in $\Ccal^2(\overline{\Omega})$ and are compatible with the homogeneous Neumann boundary conditions.

  Then, there exists a global (defined on $\R_+ \times \Omega$) strong (that is, all terms appearing in the equation are defined a.e.) solution to system \eqref{eq:SKT}--\eqref{eq:SKTbc}.\medskip

  More precisely, for any $T>0$ we have $u\in\fsL^\infty([0, T], \fsL^q(\Omega))\cap\fsL^2([0, T], \fsH^1(\Omega))$ and $v\in\fsL^\infty([0, T], \fsW^{1, \infty}(\Omega))\cap\fsL^q([0, T], \fsW^{2, q}(\Omega))\cap\fsW^{1, q}([0, T], \fsL^{q}(\Omega))$ for every $1 < q <\infty$.
\end{prop}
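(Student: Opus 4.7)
The plan is to build the solution by a regularization/approximation procedure, derive a priori bounds that are uniform in the regularization parameter, and pass to the limit by compactness. A natural approximate system is obtained by truncating the cross-diffusion coefficient (replace $\sigma v$ by $\sigma \min(v,N)$, or use a semi-discrete-in-time scheme), smoothing the data, and solving the resulting (now semilinear, with bounded coefficients) triangular system by standard parabolic theory: solve first the $v$-equation, and then the $u$-equation, which is linear once $v$ is known.

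Second, I would establish non-negativity and an $\fsL^\infty$ bound on $v$. Non-negativity of $u$ and $v$ follows from the structure of the Lotka--Volterra reaction terms via the maximum principle. For $v$, since $u \soe 0$ we have $f_v(u,v) \ioe r_v - d_{22} v$, so the scalar maximum principle applied to the $v$-equation gives $\|v(t)\|_{\fsL^\infty} \ioe \max(\|v_\init\|_{\fsL^\infty}, r_v / d_{22})$ uniformly in time and in the regularization.

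Third, and this is the technical heart of the proof, I would derive the key a priori estimate on $u$. Writing the first equation in the divergence form $\partial_t u - \nabla\cdot\bigl((d_u+\sigma v)\nabla u + \sigma u \nabla v\bigr) = f_u(u,v)u$, the coefficient $d_u + \sigma v$ is bounded from above and below thanks to Step~2. A Pierre-type duality argument (multiplying $u$, or rather $u+v$, by the solution of a dual backward parabolic problem and using elliptic regularity with $\fsL^\infty$ coefficients) combined with an entropy estimate based on $u\log u$ gives uniform bounds of the form $u \in \fsL^\infty_t \fsL^q_x \cap \fsL^2_t \fsH^1_x$ for every $q<\infty$, where the restriction $d \ioe 4$ enters through the Sobolev embedding $\fsH^1 \hookrightarrow \fsL^{2d/(d-2)}$ needed to close the bootstrap. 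Here the Lotka--Volterra damping term $-d_{11} u^2$ in $f_u u$ provides an additional absorbing contribution that helps control the entropy.

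Fourth, once $u$ is known to lie in $\fsL^q_t \fsL^q_x$ for large $q$, the $v$-equation is a linear heat equation with right-hand side $f_v(u,v) v$ in $\fsL^q$, so parabolic maximal regularity yields $v \in \fsL^q_t \fsW^{2,q}_x \cap \fsW^{1,q}_t \fsL^q_x$, and Sobolev embedding then gives $v \in \fsL^\infty_t \fsW^{1,\infty}_x$ (using $d\ioe 4$ so that $\fsW^{2,q} \hookrightarrow \fsW^{1,\infty}$ for $q$ sufficiently large). Finally, these uniform bounds together with Aubin--Lions compactness allow one to pass to the limit in the regularized system and recover a strong solution of \eqref{eq:SKT}--\eqref{eq:SKTbc} with the claimed regularity. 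The main obstacle I expect is the duality-entropy step producing the $\fsL^\infty_t \fsL^q_x$ bound on $u$: the cross-diffusion term forbids a direct maximum principle on $u$, and one really has to exploit both the triangular structure (boundedness of $v$) and the competition reaction to close the estimate in the physical range $d \ioe 4$.
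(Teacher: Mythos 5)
This statement is quoted from \cite{boutondesvillettes2025} rather than proved here, but the present paper reproduces the same proof strategy for the $\eps$-dependent system in \cref{thm:DGNM-bound}, so a direct comparison is possible. Your Steps 1, 2 and 4 match that strategy (approximation, maximum principle for $v$, maximal regularity plus mixed space-time interpolation for $v$ — note that $\fsL^q_t\fsW^{2,q}_x\cap\fsW^{1,q}_t\fsL^q_x$ gives $\fsL^\infty_t\fsW^{1,\infty}_x$ only after an interpolation in the time variable, which the paper carries out explicitly). The genuine gap is in Step 3: the claim that a Pierre-type duality argument combined with the $u\log u$ (or more generally $u^p$) entropy closes the bootstrap in dimension $d\ioe 4$. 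The improved duality lemma yields only $u\in\fsL^{2+\delta}(\Omega_T)$ for some small $\delta>0$ depending on the ratio of the diffusion coefficients — not on an arbitrary exponent. The entropy estimate (\cref{thm:entropy-estimate}) upgrades $u\in\fsL^{p+1}(\Omega_T)$ to $u\in\fsL^{p\frac{d+2}{d}}(\Omega_T)$, which is an improvement only when $p\frac{d+2}{d}>p+1$, i.e.\ $p>\frac d2$. In dimension $d=4$ this means the iteration only starts once $u\in\fsL^{3+}(\Omega_T)$, so there is an unbridged interval between the $\fsL^{2+}$ output of duality and the $\fsL^{3+}$ input needed by the entropy bootstrap.

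Bridging that interval is precisely the content of Theorem 1 of \cite{boutondesvillettes2025} and is what the restriction $d\ioe 4$ is really calibrated to. The mechanism is not a Sobolev embedding alone: one introduces the time-integrated quantity $w=\int_0^t(d_u+\sigma v)u$ (in the present paper, $w^\eps=\int_0^t(d_Au_A^\eps+(d_A+d_B)u_B^\eps+m^\eps)$), shows via a De Giorgi--Nash--Moser argument for the rough-coefficient equation $a\,\partial_t w-\Delta w=f$ (\cref{thm:DGNM}) that $w\in\Ccal^{0,\alpha}$ uniformly, and then applies the one-sided interpolation inequality of \cref{thm:interpolation} between $\|w\|_{\Ccal^{0,\alpha}}$ and $\|\nabla u\|_{\fsL^2}$ (the latter controlled by the entropy with $p=1$) to land in $\fsL^{2\frac{3-\alpha}{2-\alpha}}(\Omega_T)\subset\fsL^{3+}(\Omega_T)$. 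Without this De Giorgi plus one-sided interpolation step, your sketch does not close in $d=4$ (and is already delicate in $d=3$, since the duality exponent need not reach $\frac52$); this is the missing idea rather than a presentational omission.
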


Since the seminal work of \textcite{iidamimura2006}, it is well-known that the system \eqref{eq:SKT} can be derived at the formal level as the singular limit $\eps\to 0$ of a microscopic semi-linear parabolic system on $\Omega\times[0, T]$, namely the system
\begin{equation} \label{eq:SKT-eps}
  \left\{\begin{aligned}
     & \partial_{t} u_{A}^\epsilon - d_A \,\Delta_x u_{A}^\epsilon = f_u(u_A^\eps+u_B^\eps, v^\eps)\,u_{A}^\epsilon + \frac{1}{\epsilon}\,[k(v^\epsilon)\,u_{B}^\epsilon-h(v^\epsilon)\,u_{A}^\epsilon],     \\
     & \partial_{t} u_{B}^\epsilon - (d_A+d_B)\, \Delta_x u_{B}^\epsilon = f_u(u_A^\eps+u_B^\eps, v^\eps)u_{B}^\epsilon - \frac{1}{\epsilon}\,[k(v^\epsilon)\,u_{B}^\epsilon-h(v^\epsilon)\,u_{A}^\epsilon], \\
     & \partial_{t} v^\epsilon - d_v \,\Delta_x v^\epsilon = f_v(u_A^\eps+u_B^\eps, v^\eps)v^\eps,\\
     & \nabla_x u_A^\epsilon \cdot n = \nabla_x u_B^\epsilon \cdot n = \nabla_x v^\epsilon \cdot n = 0  \quad \text{on } [0, T]\times\partial\Omega,
  \end{aligned}
  \right.
\end{equation}
where
\begin{align*}
  f_u(u_A^\eps+u_B^\eps, v^\eps) & = r_u-d_{11}(u_{A}^\epsilon+u_{B}^\epsilon)-d_{12}\,v^\epsilon,     \\
  f_v(u_A^\eps+u_B^\eps, v^\eps) & = r_v -d_{21}(u_{A}^\epsilon +u_{B}^\epsilon) - d_{22}\,v^\epsilon,
\end{align*}

the quantities $u_A^\eps, u_B^\eps, v^\eps$ are nonnegative. The coefficients $d_A, d_B > 0$ and the smooth functions $h, k \soe 0$ are such that for some constants $S, h_0 > 0$:
\begin{equation}\label{eq:hk-relation}
  d_A + d_B\frac{h(v)}{h(v)+k(v)} = d_u + \sigma v, \quad h, k > h_0, \quad\text{and} \quad h(v) + k(v) = S.
\end{equation}
The existence of $d_A, d_B, h$ and $k$ such that \eqref{eq:hk-relation} holds is provided by \cite{desvillettestrescases2015}. We quickly recall their argument. An application of the maximum principle shows that $\|v^\eps\|_{\fsL^\infty(\Omega_T)} \ioe \max(\|v_\init \|_{\fsL^\infty(\Omega_T)}, \frac{r_v}{2d_{22}}) =: A$. Then, one takes $d_A := \frac{d_u}{2}, d_B := d_u + \sigma A + 1$, $\phi$ a smooth function such that $\phi(x) = \sigma x$ for $0 \ioe x \ioe A$ and $-\frac{d_u}{4} \ioe \phi (x) \ioe \sigma A + 1$ otherwise. Then one defines $h:= \frac{d_u}{2} + \phi$ and $k := \frac{d_u}{2} + \sigma A + 1 - \phi$. One can check that the hypothesis are satisfied. We notice that on $[0, A]$, $h$ and $k$ can be taken affine, however we will only assume this hypothesis sometimes for simplicity.\medskip

Then, for every $\eps > 0$, \eqref{eq:SKT-eps} admits a unique global solution in $\Ccal^2(\R_+\times\overline\Omega)$ (see for instance \cite[Proposition 3.2]{desvillettes2007} and standard manipulations), and at the formal level, we have $u_A^\eps + u_B^\eps \to u$ and $v^\eps\to v$, where $u, v$ are given by \cref{thm:globalSKT}.\medskip

This model is also biologically meaningful. Indeed one can consider $u_A$ as the number density of the individuals of the species $u$ that are not stressed and diffuse slowly, whereas the number density $u_B$ represents the individuals, who are stressed by the presence of the individuals of the species $v$, and diffuse faster. From a mathematical perspective, understanding the emergence of cross-diffusion from semi-linear equations is also challenging, see for instance \cite{dausdesvillettes2020}.

\subsection{Main results and key ideas}
In this article, we mainly focus on quantifying the convergence of the solution to \eqref{eq:SKT-eps} toward the solution to \eqref{eq:SKT} in dimension $d\ioe 3$. More precisely, we define
\begin{equation}\label{eq:def}
  Q^\eps := k(v^\epsilon)\,u_{B}^\epsilon-h(v^\epsilon)\,u_{A}^\epsilon, \quad u^\eps := u_A^\eps + u_B^\eps, \quad U^\eps : = u^\eps - u, \quad V^\eps := v^\eps - v,
\end{equation}
and we show the
\begin{theo}\label{thm:conv}
  Let  $d\ioe 4$, $T > 0$, let $\Omega$ be a smooth  bounded domain of $\R^d$, $d_1,d_2,\sigma >0$, $r_u,r_v\ge 0$, $d_{ij} > 0$ for $1\ioe i, j \ioe 2$ and $d_A, d_B, h, k$ satisfying \eqref{eq:hk-relation}. Let $u_{A, \init}, u_{B, \init}, v_\init \soe 0$ belong to $\Ccal^{2, \alpha}(\overline\Omega)$ for some $\alpha > 0$ and compatible with the homogeneous Neumann boundary conditions. \medskip
  
  Let $u^\eps_A, u^\eps_B, v^\eps$ be the unique global strong solution (in $\Ccal^2(\R_+\times\overline\Omega)$) to \eqref{eq:SKT-eps} (with initial data $u_{A, \init}, u_{B, \init}, v_\init$), and $u, v$ be a solution to \eqref{eq:SKT} given by \cref{thm:globalSKT} (with initial data $u_\init := u_{A, \init} + u_{B, \init}, v_\init$). We recall that $U^\eps, V^\eps, Q^\eps$ are defined by \eqref{eq:def}.\medskip

  Then there exists a constant $C_0$ that depends only on the parameters of \eqref{eq:SKT-eps}, $d, T, \Omega$ and the norm of $u_{A, \init}, u_{B, \init}, v_\init$ in $\Ccal^{2, \alpha}(\overline\Omega)$, but not on $\eps$, such that
  \begin{equation*}
    \|U^\eps\|_{\fsL^\infty([0, T], \fsL^2(\Omega))} + \|U^\eps\|_{\fsL^2([0, T], \fsH^1(\Omega))} \ioe C_0\eps^\frac12 ,
  \end{equation*}
  and 
  \begin{equation*}
    \|V^\eps\|_{\fsL^\infty([0, T], \fsL^2(\Omega))} + \|V^\eps\|_{\fsL^2([0, T], \fsH^1(\Omega))} \ioe C_0\eps^\frac12.
  \end{equation*}

  Furthermore we also have an improved rate of convergence
    \begin{equation*}
    \|U^\eps\|_{\fsL^2([0, T], \fsL^2(\Omega))}^2 + \|V^\eps\|_{\fsL^2([0, T], \fsL^2(\Omega))}^2 \ioe C_0\left(\int_\Omega |Q^\eps(0)|^2\eps + \eps^2\right).
  \end{equation*}

  Let $l \soe 0$, we furthermore assume that $d\ioe 3$ and that $u_{A, \init}, u_{B, \init}, v_\init$ are in $\Ccal^\infty(\overline{\Omega})$ and compatible with the homogeneous Neumann boundary conditions. We also assume that $h, k$ are affine on $[0, \sup_{\eps > 0} \|v^\eps\|_{\fsL^\infty(\Omega_T)}]$. Let $\tilde\Omega \Subset \Omega$, and $\eps_{\init, l}^2 := \sup_{k\ioe l + 1} \int_\Omega |\nabla^{\otimes k} Q^\eps(0)|^2$. Then there exists a constant $C_l$ that depends only on the parameters of \eqref{eq:SKT-eps}, $d, T, l, \tilde\Omega$ and the norm of the initial data $u_{A, \init}, u_{B, \init}, v_\init$ in a Sobolev space $\fsH^k(\Omega)$ for $k$ large enough, but not on $\eps$, such that
  \begin{equation*}
    \|U^\eps\|_{\fsL^\infty([0, T], \fsH^l(\tilde{\Omega}))} + \|V^\eps\|_{\fsL^\infty([0, T], \fsH^l(\tilde{\Omega}))}
    \ioe C_l(\eps_{\init, l}\eps^\frac12 + \eps).
  \end{equation*}
\end{theo}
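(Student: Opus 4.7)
The plan is to proceed by induction on $l$, with the base case $l=0$ derived from an interior cutoff version of the improved $\fsL^2\fsL^2$ rate of the previous part of the theorem. Summing the $u_A^\eps$- and $u_B^\eps$-equations in \eqref{eq:SKT-eps} and using \eqref{eq:hk-relation} together with $h+k = S$ yields the reformulation
\begin{equation*}
\partial_t u^\eps - \Delta\bigl[(d_u + \sigma v^\eps)\,u^\eps\bigr] = f_u(u^\eps, v^\eps)\,u^\eps + \tfrac{d_B}{S}\,\Delta Q^\eps,
\end{equation*}
so that subtracting \eqref{eq:SKT} gives a parabolic equation for $U^\eps$ whose only new forcing is $\tfrac{d_B}{S}\Delta Q^\eps$; meanwhile $V^\eps$ solves a heat equation with Lipschitz source in $(U^\eps, V^\eps)$. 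The defect $Q^\eps$ itself satisfies a damped equation
\begin{equation*}
\partial_t Q^\eps + \tfrac{S}{\eps}\,Q^\eps = \Psi^\eps,
\end{equation*}
where $\Psi^\eps$ is polynomial in $u_A^\eps, u_B^\eps, v^\eps$ and their derivatives up to order $2$; crucially, the fact that $h+k$ is constant cancels an otherwise singular $\tfrac{1}{\eps}\cdot(\text{diffusion})$ term at this step.

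I would then establish uniform-in-$\eps$ interior Sobolev bounds of arbitrary order for $(u_A^\eps, u_B^\eps, v^\eps)$. Starting from the $\fsL^\infty$ bound on $v^\eps$ (maximum principle) and on $u^\eps$ (from \cref{thm:globalSKT} combined with a bootstrap), interior parabolic regularity applied to \eqref{eq:SKT-eps} with $d\ioe 3$ (so that $\fsH^2\hookrightarrow\fsL^\infty$ absorbs the Lotka-Volterra nonlinearities) yields uniform bounds on $\|u_A^\eps\|_{\fsL^\infty\fsH^k(\tilde\Omega')}$, $\|u_B^\eps\|_{\fsL^\infty\fsH^k(\tilde\Omega')}$ and $\|v^\eps\|_{\fsL^\infty\fsH^k(\tilde\Omega')}$ for every $k$ and every $\tilde\Omega\Subset\tilde\Omega'\Subset\Omega$, the singular term $\tfrac{1}{\eps}Q^\eps$ being controlled by exploiting its damped structure rather than ignored. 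Applying $\partial^\alpha$ to the $Q^\eps$-equation for $|\alpha|\ioe l+1$ and using Duhamel on the resulting ODE (pointwise in $x$), the affine assumption on $h, k$ ensures that no higher derivatives of $h, k$ appear, and one obtains
\begin{equation*}
\int_0^T \|\nabla^{l+1}Q^\eps\|_{\fsL^2(\tilde\Omega')}^2\,dt \ioe C_l\bigl(\eps\,\eps_{\init, l}^2 + \eps^2\bigr).
\end{equation*}

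To close the induction, for each $|\alpha|\ioe l$ I would apply $\partial^\alpha$ to the equation for $U^\eps$ and test against $\chi^2\partial^\alpha U^\eps$, with $\chi\in\Ccal_c^\infty(\Omega)$ equal to $1$ on $\tilde\Omega$ and supported in $\tilde\Omega'$. The $\Delta Q^\eps$ contribution becomes, after integration by parts, $-\tfrac{d_B}{S}\int\nabla(\chi^2\partial^\alpha U^\eps)\cdot\nabla\partial^\alpha Q^\eps$, absorbed for a small fraction into the dissipation $\int\chi^2(d_u + \sigma v^\eps)|\nabla\partial^\alpha U^\eps|^2$ via Young's inequality, leaving a forcing controlled by $\|\nabla^{l+1}Q^\eps\|_{\fsL^2(\tilde\Omega')}^2$. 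The reaction and drift terms (including the cross-coupling $\sigma u V^\eps$) are Lipschitz perturbations closed by the induction hypothesis on $\|U^\eps\|_{\fsH^{l-1}(\tilde\Omega'')}$ and $\|V^\eps\|_{\fsH^{l-1}(\tilde\Omega'')}$ together with the uniform interior bounds above. An analogous but simpler estimate for $V^\eps$ (no $\Delta Q^\eps$ source) and a Gronwall closure then yield the stated bound.

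The main obstacle is the uniform-in-$\eps$ interior high-order regularity for $(u_A^\eps, u_B^\eps, v^\eps)$: the singular reaction $\tfrac{1}{\eps}Q^\eps$ rules out any naive parabolic bootstrap, and the estimates must be closed level by level by exploiting the damping structure of $Q^\eps$ to absorb the $\eps^{-1}$ factor. The dimensional restriction $d\ioe 3$ and the affine assumption on $h, k$ both serve to make this bootstrap technically manageable, the latter in particular preventing high-order derivatives of $h, k$ from accumulating when $\partial^\alpha$ hits $Q^\eps$.
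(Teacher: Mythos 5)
Your overall architecture matches the paper's: the damped equation $\partial_t Q^\eps + \frac{S}{\eps}Q^\eps = R^\eps$, the rewriting of the $u^\eps$-equation so that the only singular forcing in the $U^\eps$-equation is $\frac{d_B}{S}\Delta Q^\eps$ (equivalently, the decomposition $U_B^\eps = \frac{1}{S}(Q^\eps + H^\eps u + h(v^\eps)U^\eps)$ used in the paper), and an induction on $l$ with nested cutoffs. However, there is a genuine gap at the step you yourself flag as "the main obstacle": the uniform-in-$\eps$ interior regularity of $(u_A^\eps, u_B^\eps, v^\eps)$ and the bound $\int_0^T\|\nabla^{\otimes(l+1)}Q^\eps\|_{\fsL^2(\tilde\Omega')}^2 \ioe C(\eps\,\eps_{\init,l}^2+\eps^2)$. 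Your proposed mechanism --- interior parabolic regularity plus "Duhamel on the ODE for $\partial^\alpha Q^\eps$" --- is circular as stated: the source $R^\eps$ contains $\Delta u_A^\eps$ and $\Delta u_B^\eps$, so controlling $\partial^\alpha R^\eps$ in $\fsL^2$ for $|\alpha|=l+1$ requires $u_C^\eps$ bounded in $\fsH^{l+3}$, which is precisely what the parabolic bootstrap on the $u_C^\eps$-equations cannot deliver without first absorbing the $\frac{1}{\eps}\partial^\alpha Q^\eps$ term. Saying the damping "controls" this term does not break the circle.

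The paper breaks it with a specific algebraic device you do not have: the corrected energy functionals $\Ecal_A^\eps = \int h(v^\eps)|\nabla u_A^\eps - \frac{u^\eps}{S}\nabla k(v^\eps)|^2$ (and their higher-order analogues built on $X_A = \partial^\alpha u_A^\eps - \frac{1}{S}\sum_{\gamma>0}\binom{\alpha}{\beta}\partial^\gamma \tilde k^\eps\,\partial^\beta u^\eps$). Differentiating these in time, the two $\frac{1}{\eps}$-contributions from $A$ and $B$ combine \emph{exactly} into $-\frac{1}{\eps}\int\chi^2|\partial^\alpha Q^\eps|^2 \ioe 0$, using $\partial^\gamma h(v^\eps)+\partial^\gamma k(v^\eps)=0$; this simultaneously yields the dissipation $\int|\nabla(\chi\partial^\alpha u_C^\eps)|^2$ and the bound $\frac{1}{\eps}\int\int|\nabla^{\otimes l}Q^\eps|^2\ioe C$ that feeds the convergence proof. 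The corrector $\frac{u^\eps}{S}\nabla k(v^\eps)$ is not optional: $\nabla u_A^\eps$ alone is \emph{not} uniformly bounded, since the fast relaxation creates a gradient of $u_A^\eps$ instantaneously wherever $v$ is inhomogeneous, so an energy estimate on $\int|\nabla^{\otimes l}u_A^\eps|^2$ itself would fail. Only once this regularity is in hand does the paper run the $Q^\eps$-Duhamel step you describe (with $R^\eps$ now known to be bounded), then the duality argument with $(\Delta - I)^{-1}$ for the global $\fsL^2\fsL^2$ rate $\eps_{\init,-1}^2\eps+\eps^2$ (which your proposal also omits), and finally the localized Gronwall induction. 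Your closing energy estimate for $U^\eps$ would then go through essentially as you describe.
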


\begin{rema}
  We remark that the bound $\|U^\eps\|_{\fsL^2([0, T], \fsL^2(\Omega))}^2 \lesssim \int_\Omega |Q^\eps(0)|^2\eps + \eps^2$ can be written as $\|U^\eps\|_{\fsL^2([0, T], \fsL^2(\Omega))}^2 \lesssim \eps_{\init, -1}^2\eps + \eps^2$, where the notation $\eps_{\init, -1}$ is consistent with the definition of $\eps_{\init, l}$.  
\end{rema}

\begin{rema}
  The same result also holds if we only assume that $v_\init := v_\init^\eps$ and $u_\init := u_\init^\eps := u_{A,\init}^\eps + u_{B,\init}^\eps$ are independent of $\eps$, and that $u_{A, \init}^\eps, u_{B, \init}^\eps, v_\init^\eps$ are uniformly bounded in $\eps$ in $\Ccal^{2, \alpha}(\overline\Omega)$ (respectively in $\fsH^{k}(\Omega)$ for $k$ large enough).
\end{rema}
 
\begin{rema}
If we work with a fixed $l$, one can assume less regularity on the initial data $u_{A, \init}, u_{B, \init}, v_\init$. Moreover in the first part of the Proposition, the assumption $u_{A, \init}, u_{B, \init}, v_\init\in \Ccal^{2, \alpha}(\overline\Omega)$ is only made to have a simpler proof of a sufficient regularity on the limit problem. Finer arguments should allow to reduce it to a bound in $\Ccal^2(\overline\Omega).$
\end{rema}

\begin{rema}
  Since global solutions of \eqref{eq:SKT-eps} are unique in $\Ccal^2(\R_+\times\overline\Omega)$, we obtain as a by-product of \cref{thm:conv} a uniqueness result for \eqref{eq:SKT} amongst the class of solutions satisfying the regularity estimates in the conclusion of \cref{thm:globalSKT}.
\end{rema}

\begin{rema}
  The assumption that $h, k$ should be affine on $[0, \sup_{\eps > 0} \|v^\eps\|_{\fsL^\infty(\Omega_T)}]$ is only made for simplicity and can be removed thanks to the use of the Faa Di Bruno formula to control the derivatives of $h(v^\eps)$ and $k(v^\eps)$.
\end{rema}

We highlight that we neither assume any smallness condition on the cross-diffusion $d_B$ nor any extra a priori bound on $U^\eps$ or $V^\eps$. To prove this result, we need to show that $u^\eps$ and $v^\eps$ enjoy some uniform in $\eps$ space regularity, hence we first prove the

\begin{prop}\label{thm:SKTeps-regu}
  Let $T>0$, $d\ioe 4$, $\Omega$ be a smooth  bounded domain of $\R^d$,
  $d_1,d_2,\sigma >0$, $r_u,r_v\ge 0$, $d_{ij} > 0$ and $d_A, d_B, h, k$ satisfying \eqref{eq:hk-relation}. Let $u_{A, \init}, u_{B, \init}, v_\init \soe 0$ belong to $\Ccal^2(\overline\Omega)$ and compatible with the homogeneous Neumann boundary conditions. Let $u^\eps_A, u^\eps_B, v^\eps$ be the unique global strong solution (in $\Ccal^2(\R_+\times\overline\Omega)$) to \eqref{eq:SKT-eps} (with initial data $u_{A, \init}, u_{B, \init}, v_\init$).

  Then there exists a constant $C_0$ that depends only on the parameters of \eqref{eq:SKT-eps}, $d, T, \Omega$ and the norm of $u_{A, \init}, u_{B, \init}, v_\init$ in $\Ccal^2(\overline\Omega)$, but not on $\eps$, such that
  \begin{equation*}
    \|\nabla u_A^\eps \|_{\fsL^{\infty}([0, T], \fsL^{2}(\Omega))} + \|\nabla u_B^\eps \|_{\fsL^{\infty}([0, T], \fsL^{2}(\Omega))}\ioe C_0,
  \end{equation*}
  \begin{equation*}
    \|u_A^\eps\|_{\fsL^2([0, T], \fsH^2(\Omega))} + \|u_B^\eps\|_{\fsL^2([0, T], \fsH^2(\Omega))} \ioe C_0,
  \end{equation*}
  and
  \begin{equation}\label{eq:estimateQ}
    \int_0^T\int_\Omega \frac{1}{\eps}|\nabla Q^\eps|^2 \ioe C_0.
  \end{equation}\\

  Let $\tilde\Omega\Subset\Omega$ and $l > 0$, we furthermore assume that $d\ioe 3$, that $u_{A, \init}, u_{B, \init}$ belong to $\fsH^{l}(\Omega)$, $v_\init$ belongs to $\fsW^{l+2, \infty}(\Omega)$ and that $h, k$ are affine on $[0, \sup_{\eps > 0} \|v^\eps\|_{\fsL^\infty(\Omega_T)}]$. Then there exists a constant $C_l$ that depends only on the parameters of \eqref{eq:SKT-eps}, $d, T, l, \tilde\Omega$ and the norm of $u_{A, \init}, u_{B, \init}$ in $\Ccal^2(\overline\Omega)\cap \fsH^{l}(\Omega)$, the norm of $v_\init$ in $\fsW^{l+2, \infty}(\Omega)$, but not on $\eps$, such that
   \begin{align*}
    \|\nabla^{\otimes l}u_A^\eps\|_{\fsL^\infty([0, T], \fsL^2({\tilde\Omega}))} + \|\nabla^{\otimes (l+1)}u_A^\eps\|_{\fsL^2({\tilde\Omega\times[0, T]})} \ioe C_l,\\
    \|\nabla^{\otimes l}u_B^\eps\|_{\fsL^\infty([0, T], \fsL^2({\tilde\Omega}))} + \|\nabla^{\otimes (l+1)}u_B^\eps\|_{\fsL^2({\tilde\Omega\times[0, T]})} \ioe C_l,\\
    \|\nabla^{\otimes (l - 1)}v^\eps\|_{\fsL^{\infty}(\tilde\Omega\times[0, T])} + \|\nabla^{\otimes l}v^\eps\|_{\fsL^{10}(\tilde\Omega\times[0, T])}\ioe C_l,\\
    \|\partial_t\nabla^{\otimes (l-1)}v^\eps\|_{\fsL^{\frac{10}{3}}(\tilde\Omega\times[0, T])} + \|\nabla^{\otimes (l+1)}v^\eps\|_{\fsL^\frac{10}{3}(\tilde\Omega\times[0, T])}\ioe C_l,
  \end{align*}
  and
  \begin{equation*}
    \frac{1}{\eps}\int_0^t\int_{\tilde\Omega} |\nabla^{\otimes l} Q|^2 \ioe C_l.
  \end{equation*}
\end{prop}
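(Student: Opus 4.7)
My plan is to split the proof along the two statements: the global $\fsL^\infty\fsL^2\cap\fsL^2\fsH^1$ bounds first, then the interior higher-order bounds by induction on $l$, both driven by energy estimates that carefully exploit the sign of the stiff $\tfrac{1}{\eps}$ reaction together with parabolic bootstrap for $v^\eps$.

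For the global estimates, I would first apply the maximum principle to the $v^\eps$ equation (which decouples from $u^\eps$ up to a reaction term of the right sign) to obtain $\|v^\eps\|_{\fsL^\infty} \ioe A$ uniformly in $\eps$. The $\fsL^p$ control on $u^\eps=u_A^\eps+u_B^\eps$ follows from summing the first two equations of \eqref{eq:SKT-eps} and using the algebraic identity $d_A u^\eps + d_B u_B^\eps = (d_u+\sigma v^\eps)u^\eps + \tfrac{d_B}{S} Q^\eps$ (which relies on $h+k = S$) to obtain
\[\partial_t u^\eps - \Delta\bigl[(d_u+\sigma v^\eps)u^\eps\bigr] = \tfrac{d_B}{S}\,\Delta Q^\eps + f_u(u^\eps,v^\eps)\,u^\eps,\]
from which standard duality/Moser arguments in dimension $d\ioe 4$ give $u^\eps$ uniformly bounded in $\fsL^p$ for large $p$. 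Maximal parabolic regularity then upgrades $v^\eps$ to a uniform $\fsL^\infty(\fsW^{1,\infty})\cap \fsL^p(\fsW^{2,p})$ bound.

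For the dissipation estimate \eqref{eq:estimateQ}, I would derive the equation for $Q^\eps$ by combining the first two equations of \eqref{eq:SKT-eps}, obtaining
\[\partial_t Q^\eps + \tfrac{S}{\eps}\,Q^\eps = k(d_A+d_B)\Delta u_B^\eps - h\,d_A\,\Delta u_A^\eps + (k'u_B^\eps - h'u_A^\eps)\,\partial_t v^\eps + f_u Q^\eps.\]
Testing this first against $Q^\eps$ yields $\|Q^\eps\|_{\fsL^2\fsL^2}^2 \lesssim \eps$. Testing next against $-\Delta Q^\eps$ turns the damping into $\tfrac{S}{\eps}\int|\nabla Q^\eps|^2$, so the bound \eqref{eq:estimateQ} reduces to controlling the Laplacians of $u_A^\eps, u_B^\eps$ in $\fsL^2\fsL^2$. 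To close the loop, I would simultaneously perform the $\fsH^1$ energy estimate on $u_A^\eps,u_B^\eps$: test the $u_A^\eps$ (resp.\ $u_B^\eps$) equation by $-h\,\Delta u_A^\eps$ (resp.\ $-k\,\Delta u_B^\eps$), so that the stiff terms $\mp\tfrac{1}{\eps}Q^\eps$ combine to produce precisely the good sign $+\tfrac{1}{\eps}\int Q^\eps\Delta Q^\eps / (h+k)$-type contribution after rearrangement — yielding a coupled Grönwall inequality for $\|\nabla u_A^\eps\|_{\fsL^2}^2+\|\nabla u_B^\eps\|_{\fsL^2}^2+\tfrac{1}{\eps}\|\nabla Q^\eps\|_{\fsL^2\fsL^2}^2$. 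The $\fsL^2\fsH^2$ bound then follows from elliptic regularity applied to $\Delta u_{A/B}^\eps = \tfrac{1}{d_A}(\partial_t u_{A/B}^\eps - f_u u_{A/B}^\eps \mp \tfrac{1}{\eps}Q^\eps)$, where $\tfrac{1}{\eps}Q^\eps$ is controlled in $\fsL^2\fsL^2$ through the same dissipation identity.

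For the interior higher-order bounds in $d\ioe 3$, I would proceed by induction on $l$, using a nested family of cutoffs $\chi_j\in \Ccal_c^\infty(\Omega)$ with $\chi_{l}\equiv 1$ on $\tilde\Omega$ and $\supp\chi_{j+1}\subset\{\chi_j\equiv 1\}$, and slightly shrinking the support at each induction step. Assuming the bounds at level $l-1$, I would apply $\partial^\alpha$ with $|\alpha|=l$ to each of the three equations, multiply the $u$-equations by $\chi_l^2\partial^\alpha u_{A/B}^\eps$ and combine them with weights $h,k$ exactly as in the first part to obtain the interior counterparts of the coupled energy estimates, together with the key interior dissipation $\tfrac{1}{\eps}\int_{\tilde\Omega}|\nabla^{\otimes l}Q^\eps|^2\ioe C_l$. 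The affinity of $h, k$ makes $\partial^\alpha h(v^\eps)$ and $\partial^\alpha k(v^\eps)$ collapse to simple multiples of $\partial^\alpha v^\eps$, so commutators reduce to products of derivatives of $v^\eps$ with derivatives of $u_A^\eps,u_B^\eps$ of order $\ioe l$. For $v^\eps$, parabolic maximal regularity in the $\fsL^{10/3}$ scale (the Sobolev-critical scale for $d=3$) converts the $\fsL^{10}$ control on $\nabla^{\otimes l}v^\eps$ coming from $\fsH^1\hookrightarrow\fsL^6$ and one more derivative into the stated $\fsL^{10/3}$ bounds on $\partial_t\nabla^{\otimes(l-1)}v^\eps$ and $\nabla^{\otimes(l+1)}v^\eps$, which is precisely what is needed to feed the $(l+1)$-th induction step.

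The main obstacle will be preventing artificial $\tfrac{1}{\eps^2}$ blow-ups when differentiating the stiff source $\tfrac{1}{\eps}Q^\eps$: each commutator $[\partial^\alpha,\chi_l^2]$ and each lower-order factor arising from Leibniz must be absorbed via the induction hypothesis and the fact that $\nabla^{\otimes k}Q^\eps=O(\sqrt\eps)$ in $\fsL^2\fsL^2$ for $k<l$. The restriction $d\ioe 3$ is crucial here through the embedding $\fsH^2\hookrightarrow \fsL^\infty$, which allows the nonlinear products in the commutators to be controlled by just one $\fsL^\infty$ factor and one $\fsL^2$ factor; in higher dimension, some products would be borderline and the induction would not close. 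Apart from that difficulty, the proof is bookkeeping-heavy but conceptually the same at each step.
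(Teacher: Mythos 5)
Your overall architecture (uniform $\fsL^p$ bounds by duality/Moser, a weighted $\fsH^1$ energy exploiting the sign of the stiff term, then an interior induction on $l$ with cutoffs and parabolic bootstrap for $v^\eps$) matches the paper's, but the central step of your energy estimate does not close as written, and the missing ingredient is precisely the paper's key idea. Testing the $u_A^\eps$-equation by $-h(v^\eps)\Delta u_A^\eps$ and the $u_B^\eps$-equation by $-k(v^\eps)\Delta u_B^\eps$ is (up to lower-order commutators) the energy estimate for $\int h(v^\eps)|\nabla u_A^\eps|^2+\int k(v^\eps)|\nabla u_B^\eps|^2$, and the stiff terms then combine into $\frac{2}{\eps}\int\nabla Q^\eps\cdot\big(h(v^\eps)\nabla u_A^\eps-k(v^\eps)\nabla u_B^\eps\big)$. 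Since $\nabla Q^\eps=k(v^\eps)\nabla u_B^\eps-h(v^\eps)\nabla u_A^\eps+u_B^\eps\nabla k(v^\eps)-u_A^\eps\nabla h(v^\eps)$ and $\nabla h(v^\eps)=-\nabla k(v^\eps)$, one has $h(v^\eps)\nabla u_A^\eps-k(v^\eps)\nabla u_B^\eps=-\nabla Q^\eps+u^\eps\nabla k(v^\eps)$, so the stiff contribution is $-\frac{2}{\eps}\int|\nabla Q^\eps|^2+\frac{2}{\eps}\int u^\eps\,\nabla k(v^\eps)\cdot\nabla Q^\eps$. The leftover term is \emph{not} the harmless ``$+\frac1\eps\int Q^\eps\Delta Q^\eps$-type contribution'' you describe: after Young's inequality it costs $\frac{C}{\eps}\int |u^\eps\nabla k(v^\eps)|^2\sim C/\eps$, which blows up (this is the analytic manifestation of the fact that $u_A^\eps$ relaxes to $\frac{k(v^\eps)}{S}u^\eps$, whose gradient is $O(1)$ even for well-prepared data). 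The paper's fix is to control instead $\Ecal_A^\eps=\int h(v^\eps)\big|\nabla u_A^\eps-\frac{u^\eps}{S}\nabla k(v^\eps)\big|^2$ and $\Ecal_B^\eps=\int k(v^\eps)\big|\nabla u_B^\eps-\frac{u^\eps}{S}\nabla h(v^\eps)\big|^2$: the correction is exactly what makes the weighted difference of the two bracketed quantities equal $-\nabla Q^\eps$, so the stiff contribution is exactly $-\frac{2}{\eps}\int|\nabla Q^\eps|^2\ioe 0$ with no remainder. The same correction (in the form $\partial^\alpha u_A^\eps-\frac1S\sum_{\gamma+\beta=\alpha,\,|\gamma|>0}\binom{\alpha}{\beta}\partial^\gamma k(v^\eps)\partial^\beta u^\eps$, so that $\partial^\alpha Q^\eps=k(v^\eps)X_B-h(v^\eps)X_A$) is needed at every level of the induction; your plan to multiply by $\chi^2\partial^\alpha u_{A/B}^\eps$ ``with weights $h,k$ exactly as in the first part'' inherits the same uncontrolled $O(1/\eps)$ remainder.

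Two secondary issues. First, in the duality step you rewrite the sum of the two equations as $\partial_t u^\eps-\Delta[(d_u+\sigma v^\eps)u^\eps]=\frac{d_B}{S}\Delta Q^\eps+f_u u^\eps$; the source $\Delta Q^\eps$ is not controlled at that stage, so the improved duality lemma does not apply. One must instead keep the full operator $\Delta(\nu^\eps u^\eps)$ with the bounded measurable coefficient $\nu^\eps=\frac{d_Au_A^\eps+(d_A+d_B)u_B^\eps}{u^\eps}$ (the paper additionally introduces an auxiliary quantity $m^\eps$ and a De Giorgi--Nash--Moser/interpolation step, since in the critical dimension $d=4$ duality alone gives only $\fsL^{2+}$, which is not enough to start the entropy bootstrap that requires $p>d/2$). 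Second, your route to \eqref{eq:estimateQ} via testing the $Q^\eps$-equation against $-\Delta Q^\eps$ produces cross terms $\int\big(k(v^\eps)(d_A+d_B)\Delta u_B^\eps-h(v^\eps)d_A\Delta u_A^\eps\big)\Delta Q^\eps$ whose quadratic form in $(\Delta u_A^\eps,\Delta u_B^\eps)$ is indefinite unless $d_B$ is small, so it cannot be absorbed by the $\fsH^2$ dissipation without a smallness assumption the theorem does not make; in the paper \eqref{eq:estimateQ} comes for free from the corrected energy identity above.
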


\begin{rema}
The dimension $d=4$ is the critical dimension in our proof, for instance the regularity estimate $\|\nabla^{\otimes (l - 1)}v^\eps\|_{\fsL^{\infty}(\tilde\Omega\times[0, T])} \ioe C$ does not hold anymore in dimension $d=4$ and we have to deal with the weaker $\|\nabla^{\otimes (l - 1)}v^\eps\|_{\fsL^{q}(\tilde\Omega\times[0, T])} \ioe C_q$ for all $q<\infty$. However, it is probably possible to consider the dimension $d=4$ thanks to a finer analysis.
\end{rema}

In the proof of \cref{thm:SKTeps-regu}, our main idea consists in introducing new energy functionals
\begin{align*}
  \Ecal^\eps_A(t):= \int_\Omega h(v^\eps)|\nabla u^\eps_A - \frac{(u^\eps_A+u^\eps_B)}{S}\nabla (k(v^\eps))|^2,\\
  \Ecal^\eps_B(t):= \int_\Omega k(v^\eps)|\nabla u^\eps_B - \frac{(u^\eps_A+u^\eps_B)}{S}\nabla (h(v^\eps))|^2.
\end{align*}
We show that this energy and higher-order variants of itself can be controlled.\\

Those regularity estimates allow to understand the precise behavior of the solution to \eqref{eq:SKT-eps}. As an example, one can consider the apparition of an initial layer, that becomes small after a time $O(\eps|\ln\eps|)$. More precisely, one shows the

\begin{coro}\label{thm:initiallayer}
  Let  $d\ioe 4$, $T > 0$, $\Omega$ be a smooth  bounded domain of $\R^d$, $d_1,d_2,\sigma >0$, $r_u,r_v\ge 0$, $d_{ij} > 0$ for $1\ioe i, j \ioe 2$ and $d_A, d_B, h, k$ satisfying \eqref{eq:hk-relation}. Let $u_{A, \init}, u_{B, \init}, v_\init \soe 0$ belong to $\Ccal^2(\overline\Omega)$ and compatible with the homogeneous Neumann boundary conditions. \medskip
  
  Let $u^\eps_A, u^\eps_B, v^\eps$ be the unique global strong solution (in $\Ccal^2(\R_+\times\overline\Omega)$) to \eqref{eq:SKT-eps} (with initial data $u_{A, \init}, u_{B, \init}, v_\init$).\medskip 

  Then there exists a constant $C_0$ that depends only on the parameters of \eqref{eq:SKT-eps}, $d, T, \Omega$ and the norm of $u_{A, \init}, u_{B, \init}, v_\init$ in $\Ccal^2(\overline\Omega)$, but not on $\eps$, such that for $t_\eps :=\frac{\eps\lvert\ln\eps\rvert}{2S}$
  \begin{equation*}
    \|Q^\eps(t_\eps)\|_{\fsL^2(\Omega)}\ioe C_0\eps^{\frac 12}.
  \end{equation*}

  Let $\tilde\Omega\Subset\Omega$, we furthermore assume that $d\ioe 3$, that $v_\init$ belongs to $\fsW^{4, \infty}(\Omega)$ and that $h, k$ are affine on $[0, \sup_{\eps > 0} \|v^\eps\|_{\fsL^\infty(\Omega_T)}]$. Then there exists a constant $C_1$ that depends only on the parameters of \eqref{eq:SKT-eps}, $d, T, \tilde\Omega$ and the norm of $u_{A, \init}, u_{B, \init}$ in $\Ccal^2(\overline\Omega)$, the norm of $v_\init$ in $\fsW^{4, \infty}(\Omega)$, but not on $\eps$, such that for $t'_\eps := \frac{\eps\lvert\ln\eps^2\rvert}{2S}$, we have
  \begin{equation*}
    \|Q^\eps(t'_\eps)\|_{\fsL^2(\tilde\Omega)} \ioe C_1\eps \lvert\ln\eps\rvert^\frac12.
  \end{equation*}
\end{coro}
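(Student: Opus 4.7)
The idea is to exploit the fast-dissipation structure of the evolution equation satisfied by $Q^\eps$. Combining the equations for $u_A^\eps$ and $u_B^\eps$ in \eqref{eq:SKT-eps} with the identity $h(v)+k(v)=S$, one computes
\[
  \partial_t Q^\eps + \frac{S}{\eps}Q^\eps = F^\eps,
\]
where
\[
  F^\eps := (k'(v^\eps)u_B^\eps - h'(v^\eps)u_A^\eps)\,\partial_t v^\eps + k(v^\eps)(d_A+d_B)\,\Delta u_B^\eps - h(v^\eps)\,d_A\,\Delta u_A^\eps + f_u(u^\eps, v^\eps)\, Q^\eps.
\]
Multiplying by $Q^\eps$, integrating on $\Omega$, applying Young's inequality with parameter $S/\eps$, absorbing the $f_u Q^\eps$ contribution into a harmless $e^{CT}$ Gronwall prefactor, and extracting a square root yields the Duhamel-type bound
\[
  \|Q^\eps(t)\|_{L^2(\Omega)} \lesssim e^{-St/\eps}\|Q^\eps(0)\|_{L^2(\Omega)} + \int_0^t e^{-S(t-s)/\eps}\|F^\eps(s)\|_{L^2(\Omega)}\,ds.
\]

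For the first estimate I would evaluate the display above at $t_\eps = \eps|\ln\eps|/(2S)$, so that $e^{-St_\eps/\eps} = \eps^{1/2}$ and the initial-datum term contributes $O(\eps^{1/2})$. Cauchy--Schwarz in time bounds the forcing integral by $\sqrt{\eps/(2S)}\,\|F^\eps\|_{L^2([0,T], L^2(\Omega))}$. Proposition \ref{thm:SKTeps-regu} controls $u_A^\eps, u_B^\eps$ uniformly in $L^2([0,T], H^2(\Omega))$, while the $v^\eps$-equation combined with the maximum-principle bound on $v^\eps$ gives $\partial_t v^\eps$ and $\Delta v^\eps$ uniformly in $L^2_{t,x}(\Omega)$. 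Thus $\|F^\eps\|_{L^2_{t,x}(\Omega)} \leq C$, and the bound $\|Q^\eps(t_\eps)\|_{L^2(\Omega)} \leq C_0\eps^{1/2}$ follows.

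For the interior estimate I would localize via a cutoff $\chi \in \Ccal^\infty_c(\Omega)$ with $\chi\equiv 1$ on $\tilde\Omega$ and $\supp\chi \Subset \Omega' \Subset \Omega$; the same energy argument applied to $\chi Q^\eps$ gives the Duhamel bound with $\|\chi Q^\eps\|_{L^2}$ on the left and $\|\chi F^\eps\|_{L^2}$ as forcing. Under the additional hypothesis $v_\init \in \fsW^{4,\infty}(\Omega)$, the second part of \cref{thm:SKTeps-regu} (applied with $l=2$ on $\Omega'$) yields uniform $\fsL^\infty([0,T], \fsL^2(\Omega'))$ bounds on $\Delta u_A^\eps$ and $\Delta u_B^\eps$ and, through the $v^\eps$-equation, on $\partial_t v^\eps$, so $\|\chi F^\eps\|_{\fsL^\infty_t \fsL^2_x} \leq C$. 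At $t'_\eps = \eps|\ln\eps|/S$ one has $e^{-St'_\eps/\eps} = \eps$, producing $O(\eps)$ from the initial data. On the short initial layer $[0, t'_\eps]$ of length $O(\eps|\ln\eps|)$,
\[
  \|\chi F^\eps\|_{\fsL^2([0, t'_\eps], \fsL^2(\Omega))} \leq \sqrt{t'_\eps}\,\|\chi F^\eps\|_{\fsL^\infty_t \fsL^2_x} \leq C\sqrt{\eps|\ln\eps|},
\]
so Cauchy--Schwarz against the exponential kernel gives a forcing contribution of $\sqrt{\eps/(2S)}\cdot C\sqrt{\eps|\ln\eps|} = O(\eps|\ln\eps|^{1/2})$, and summing with the initial-datum term yields $\|Q^\eps(t'_\eps)\|_{\fsL^2(\tilde\Omega)} \leq C_1\eps|\ln\eps|^{1/2}$.

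\textbf{Main obstacle.} The delicate step is to secure the uniform-in-time $\fsL^2(\Omega')$ control of $\partial_t v^\eps$ (and thus of $F^\eps$) from only $v_\init \in \fsW^{4,\infty}$: the interior regularity of \cref{thm:SKTeps-regu} with $l=2$ provides primarily $\fsL^{10}_{t,x}$ control on $\Delta v^\eps$, so this step may need a short bootstrap via the $v^\eps$-equation and the lower-order interior estimates on $\nabla v^\eps$ to upgrade to an $\fsL^\infty_t\fsL^2_x$ bound on a slightly shrunken subdomain containing $\supp\chi$. The $|\ln\eps|^{1/2}$ loss in the second part is the natural cost of Cauchy--Schwarz against the exponential kernel on an initial layer of length $O(\eps|\ln\eps|)$; conversely, the absence of a log factor in the first estimate reflects that one only needs the global $\fsL^2_{t,x}(\Omega)$ regularity of $F^\eps$ and the full $\eps^{1/2}$ exponential decay at the shorter time $t_\eps$.
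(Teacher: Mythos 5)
Your proposal is correct and follows essentially the same route as the paper: both exploit the damped equation $\partial_t Q^\eps+\frac{S}{\eps}Q^\eps=R^\eps$, use the global $\fsL^2_{t,x}$ bound on the forcing for the first estimate, and for the interior estimate upgrade the forcing to $\fsL^\infty_t\fsL^2_x$ via the $l=2$ interior regularity of \cref{thm:SKTeps-regu} plus exactly the bootstrap on $\Delta v^\eps$ through the $v^\eps$-equation that you flag as the delicate step (the paper does this with an energy estimate for $\chi\Delta v^\eps$). The only caveat is presentational: the Duhamel-type bound with the factor $e^{-St/\eps}$ on the initial datum does not follow from the ``multiply by $Q^\eps$ and apply Young'' argument you describe (which loses the exponential decay); it should instead be obtained from the variation-of-constants formula, or, as the paper does, by subtracting $e^{-St/\eps}Q^\eps(0,\cdot)$ and running the energy estimate on the remainder.
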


\subsection{Review of the literature}
The derivation of various cross-diffusion systems from microscopic systems has attracted a lot of attention, we refer to \cite{soresina2023,tang2024,desvillettes2025a} for recent results in chemical or prey-predator models that study quantitative convergence rates.\\
Concerning the triangular SKT system, \textcite{iidamimura2006} show convergence of $u_A^\eps + u_B^\eps$ towards $u$ at rate $O(\eps)$ in $\fsL^2(\Omega_T)$ provided the initial data are well prepared (in the sense that $Q^\eps(t = 0) = 0$), under the assumption that the cross-diffusion is small and assuming some bounds on $u^\eps$. \textcite{confortodesvillettes2014} and later \textcite{desvillettestrescases2015} also consider this derivation but they do not quantify the rate of convergence and they assume some restrictive hypothesis either on the dimension $d=1$ or on $f_u, f_v$.
\textcite{morgansoresina2026} consider a predator-prey system that shares a lot of mathematical similarity with \eqref{eq:SKT}. In dimension $d=2$ and under some minor assumptions on the coefficients of their model, they derive a convergence result in $\fsL^2([0, T], \fsL^2(\Omega))$ at rate $O(\eps^\frac 12)$. \textcite{brocchieri2025} consider a variant of \eqref{eq:SKT-eps}, where $h, k$ are replaced by some functions $\psi(u_A^\eps, u_B^\eps, v^\eps), \phi(u_A^\eps, u_B^\eps, v^\eps)$ of prescribed form. Using the explicit dependency of $\psi, \phi$ with respect to $u_A^\eps$ and $u_B^\eps$, they derive a convergence result in $\fsL^\infty([0, T], \fsL^2(\Omega))$ and in $\fsL^2([0, T], \fsH^1(\Omega))$ under the hypothesis that $u^\eps$ is uniformly bounded in $\fsL^\infty([0, T], \fsL^\infty(\Omega))$ and under a smallness assumption on $d_B$. They provide a rate of convergence $O(\eps^\frac 12\eps_{\init, 0} + \eps^2)$ in the whole domain.\\
Variants of the SKT system have also been considered, for instance \cite{confortodesvillettes2018} and \cite{desvillettesfiorentino2025} consider different logistic terms, but no explicit convergence rate is provided. We also refer to \cite{murakawa2012} and \cite{desvillettessoresina2019} where fast-reaction approximations of non-triangular predator-prey or competitive systems are considered.

\subsection{Notations}
We introduce here some notations that will be used throughout the whole article. For a domain $\Omega$ and some $T > 0$, we denote $\Omega_T := \Omega \times[0, T]$. We also denote $\N^* := \N\setminus\{0\}$. For some bounded domain $Q$ and $1\ioe p < + \infty$, we denote $\fsL^{p+}(Q) := \cup_{q > p} \fsL^q(Q)$, similarly for $1\ioe p \ioe + \infty$, we define $\fsL^{p-}(Q) := \cap_{q < p} \fsL^q(Q)$.\\

We will write $u^\eps_C$ for either $u^\eps_A$ or $u^\eps_B$. We introduce $u_A := \frac{k(v)}{S}u$ and $u_B := \frac{h(v)}{S}u$ (hence $u_A + u_B = u$). We also introduce $\tilde h = h(v)$, $\tilde k = k(v)$, and $\tilde h^\eps = h(v^\eps)$, $\tilde k^\eps = k(v^\eps)$.

\subsection{Organization of the paper}
In \cref{sec:regularity} we prove \cref{thm:SKTeps-regu}. We divide the proof in two parts: we first consider the rate of convergence $O(\eps^\frac 12)$ for $l=0$ and then we deal with the improved rate of convergence $\eps_{\init, l}\eps^\frac12 + \eps$. Those two proofs are very similar, but they are presented independently since some arguments are different. In \cref{sec:convergence}, we prove \cref{thm:conv}, we have similarly separated the proof in two cases. Finally, we prove \cref{thm:initiallayer} in \cref{sec:initial}. \medskip

\section{Regularity estimates : proof of \texorpdfstring{\cref{thm:SKTeps-regu}}{Theorem 2}}\label{sec:regularity}
\subsection{Initial regularity}
We first start by showing that the estimates on $u^\eps$ and $v^\eps$ obtained in \cite{boutondesvillettes2025} hold uniformly in $\eps$. More precisely we show the

\begin{prop}\label{thm:DGNM-bound}
  Let $T>0$, $d\ioe 4$, $\Omega$ be a smooth bounded domain of $\R^d$, $d_1, d_2, \sigma >0$, $r_u,r_v\ge 0$, $d_{ij} > 0$ and $d_A, d_B, h, k$ satisfying \eqref{eq:hk-relation}. Let $u_{A, \init}, u_{B, \init}, v_\init \soe 0$ belong to $W^{2, \infty}(\Omega)$ and compatible with the homogeneous Neumann boundary conditions.

  Then, there exists a strong solution $u_A^\eps, u_B^\eps$ and $v^\eps$ to \eqref{eq:SKT-eps} on $\Omega_T$ such that
  \begin{align}
     & \forall 1 \ioe q < +\infty, \exists C_q >0, \|u^\eps_A\|_{\fsL^{\infty}([0, T], \fsL^q(\Omega))}, \|u^\eps_B\|_{\fsL^{\infty}([0, T], \fsL^q(\Omega))} \ioe C_q,\label{eq:initial-estimates-uLq} \\
     & \forall 1 \ioe p < + \infty, \exists C_p >0, \|\nabla((u^\eps_A)^p)\|_{\fsL^2(\Omega_T)}, \|\nabla((u^\eps_B)^p)\|_{\fsL^2(\Omega_T)} \ioe C_p, \label{eq:initial-estimates-nablau}                  \\
     & \forall 1 \ioe q < +\infty, \exists C_q>0, \|\partial_t v^\eps\|_{\fsL^q(\Omega_T)}, \|\nabla^2 v^\eps\|_{\fsL^q(\Omega_T)} \ioe C_q,\label{eq:initial-estimates-v}                              \\
     & \exists C>0, \|v^\eps\|_{\fsL^\infty(\Omega_T)}, \|\nabla v^\eps\|_{\fsL^\infty(\Omega_T)} \ioe C.\label{eq:initial-estimates-vinf}
  \end{align}
  All the constants above are independent of $\eps$.
\end{prop}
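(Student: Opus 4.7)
The strategy is to rerun, uniformly in $\eps$, the proof of Theorem~1 of \cite{boutondesvillettes2025} (recalled here as \cref{thm:globalSKT}), tracking the $\eps$-independence of every constant. The guiding observation is that summing the two stiff equations of \eqref{eq:SKT-eps} cancels the $\eps^{-1}$ exchange terms: writing $u^\eps := u_A^\eps + u_B^\eps$, addition yields
\[\partial_t u^\eps - \Delta\bigl(d_A\, u_A^\eps + (d_A + d_B)\, u_B^\eps\bigr) = f_u(u^\eps, v^\eps)\, u^\eps,\]
a scalar parabolic equation whose effective diffusion coefficient $a^\eps := (d_A u_A^\eps + (d_A+d_B) u_B^\eps)/u^\eps$ lies in $[d_A, d_A+d_B]$ uniformly in $\eps$. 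The $v^\eps$-equation contains no stiff term at all.

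I would proceed in four steps. \emph{First}, the scalar maximum principle applied to the $v^\eps$-equation (using $f_v v^\eps \ioe (r_v - d_{22}v^\eps)v^\eps$, thanks to $u^\eps\soe 0$) gives the uniform bound $\|v^\eps\|_{\fsL^\infty(\Omega_T)} \ioe A := \max(\|v_\init\|_{\fsL^\infty(\Omega)}, r_v/d_{22})$. \emph{Second}, the Pierre--Schmitt duality method applied to the summed $u^\eps$-equation yields a uniform $\fsL^p(\Omega_T)$-bound on $u^\eps$: the dual backward problem $-\partial_t w - a^\eps\Delta w = g$ with Neumann data and $w(T)=0$ enjoys maximal $\fsL^p$-regularity with constants depending only on $d_A, d_A+d_B$, and $\Omega$. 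Combined with the absorbing structure of $f_u$ (the $-d_{11} u^\eps$ contribution) and a DGNM-type iteration on the summed equation, as in the proof of \cref{thm:globalSKT} (where the restriction $d\ioe 4$ enters through Sobolev embedding), one reaches $\|u^\eps\|_{\fsL^\infty([0, T], \fsL^q(\Omega))} \ioe C_q$ for every $q<\infty$. Since $0 \ioe u_A^\eps, u_B^\eps \ioe u^\eps$ by nonnegativity and the sum relation, this immediately implies \eqref{eq:initial-estimates-uLq}. \emph{Third}, with $u^\eps \in \fsL^q(\Omega_T)$ uniformly, the $v^\eps$-equation is a linear parabolic equation with Neumann data and right-hand side in $\fsL^q$; parabolic Calder\'on--Zygmund delivers \eqref{eq:initial-estimates-v}, while Sobolev embedding of $\fsW^{2, q}(\Omega)$ into $\Ccal^1(\overline\Omega)$ for $q>d$ upgrades the $\fsL^\infty$-bound on $v^\eps$ to the full \eqref{eq:initial-estimates-vinf}.

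\emph{Fourth}, for the gradient estimate \eqref{eq:initial-estimates-nablau}, the naive test function $(u_A^\eps)^{2p-1}$ against the $u_A^\eps$-equation leaves an uncontrollable stiff contribution $\eps^{-1}\int k(v^\eps) u_B^\eps (u_A^\eps)^{2p-1}$. The fix is to test the $u_A^\eps$-equation against $(h(v^\eps) u_A^\eps)^{2p-1}$ and the $u_B^\eps$-equation against $(k(v^\eps) u_B^\eps)^{2p-1}$ and sum: the stiff contributions combine into
\[-\frac{1}{\eps}\int_\Omega \bigl(h(v^\eps) u_A^\eps - k(v^\eps) u_B^\eps\bigr)\bigl((h(v^\eps) u_A^\eps)^{2p-1} - (k(v^\eps) u_B^\eps)^{2p-1}\bigr) \ioe 0\]
thanks to monotonicity of $x\mapsto x^{2p-1}$ on $[0,+\infty)$, while the diffusion terms, after integration by parts, furnish the coercive contribution $c_p h_0^{2p-1}\int (|\nabla(u_A^\eps)^p|^2 + |\nabla(u_B^\eps)^p|^2)$. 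The remainder terms involving derivatives of the weights $h(v^\eps), k(v^\eps)$ (both in time and space) are controlled by the $\fsL^\infty$-bounds on $v^\eps$ and $\nabla v^\eps$ and the $\fsL^q$-bound on $\partial_t v^\eps$ established in Steps~1 and~3. Integrating in time yields \eqref{eq:initial-estimates-nablau} uniformly in $\eps$. The main obstacle is precisely this fourth step: the individual equations carry $\eps^{-1}$ zeroth-order potentials, so one cannot hope for gradient estimates by treating them as perturbations; the appropriate weighted test functions turning the stiff cross term into a nonpositive quantity via monotonicity are the essential ingredient preserving $\eps$-uniformity.
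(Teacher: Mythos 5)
Your proposal follows essentially the same route as the paper: sum the two stiff equations so the $\eps^{-1}$ exchange terms cancel, bound $v^\eps$ by the maximum principle, obtain the $\fsL^q$ bounds on $u^\eps$ by duality plus the De Giorgi--Nash--Moser/interpolation bootstrap of \cite{boutondesvillettes2025}, and kill the stiff terms in the gradient estimate by testing with monotone functions of the weighted quantities $h(v^\eps)u_A^\eps$ and $k(v^\eps)u_B^\eps$. Your fourth step is in fact a re-derivation of the entropy inequality of Lemma~3 of \cite{desvillettestrescases2015}, which the paper simply cites (\cref{thm:entropy-estimate}); note, however, that the paper uses that entropy inequality \emph{inside} the $\fsL^q$ bootstrap (at each level $p$ it supplies the bound on $\|\nabla((u_C^\eps)^{p/2})\|_{\fsL^2}$ in terms of $\|u^\eps\|_{\fsL^{p+1}}$ that feeds the interpolation of \cref{thm:interpolation}), so your Steps~2 and~4 must be interleaved rather than performed sequentially --- this is harmless since your weighted testing at level $p$ only requires $\fsL^q$ information already available at that stage. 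One minor imprecision: the spatial embedding $\fsW^{2,q}(\Omega)\hookrightarrow\Ccal^1(\overline\Omega)$ applied at fixed time only yields $\nabla v^\eps\in\fsL^q([0,T],\fsL^\infty(\Omega))$, not the uniform-in-time bound \eqref{eq:initial-estimates-vinf}; one needs an anisotropic (space-time) embedding or the interpolation-in-time argument the paper gives.
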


To prove those estimates, we rely on a crucial entropy estimate

\begin{lemm}[Direct consequence of the proof of Lemma 3 from \cite{desvillettestrescases2015}]\label{thm:entropy-estimate}
  Let $\Omega$ be a $\Ccal^2$ bounded domain of $\R^d$, $d_1,d_2,\sigma >0$, $r_u,r_v\ge 0$, $d_{ij} > 0$, for $i,j=1,2$. Let $p>1$, let $u_{A, \init}, u_{B, \init} \soe 0$ in $\fsL^{p}(\Omega)$, let $v_\init\soe 0$ in $\fsL^\infty(\Omega)$. Then there exists a constant $C_{T, p}$ that may depend on $p, T, \Omega, \|v_\init\|_{\fsL^\infty}$, but that is independent of $\eps$, such that
  \begin{align*}
     & \|u_A^\epsilon\|_{\fsL^\infty([0, T], \fsL^p(\Omega))}+ \|u_B^\epsilon\|_{\fsL^\infty([0, T], \fsL^p(\Omega))}                                                      \\
     & + \|\nabla((u_A^\eps)^\frac p2)\|_{\fsL^2(\Omega_T)}^2+ \|\nabla((u_B^\eps)^\frac p2)\|_{\fsL^2(\Omega_T)}^2                                                        \\
     & +\frac{1}{\epsilon}\| (h(v^\epsilon)u_A^\epsilon)^{p/2} - (k(v^\epsilon)u_B^\epsilon)^{p/2} \|_{\fsL^2(\Omega_T)}^2                                                 \\
     & \ioe C_{T, p}(1 + \|u_{A, \init}^\eps\|_{\fsL^p(\Omega)} + \|u_{B, \init}^\eps\|_{\fsL^p(\Omega)}+ \|u_A^\eps + u_B^\eps\|_ {\fsL^{p + 1}(\Omega_T)}^{p + 1}).
  \end{align*}
\end{lemm}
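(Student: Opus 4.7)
The plan is to adapt the entropy-dissipation argument of \cite[Lemma 3]{desvillettestrescases2015} to arbitrary $p > 1$ by introducing the weighted $\fsL^p$ functional
\[E_p^\eps(t) := \frac{1}{p}\int_\Omega h(v^\eps)^{p-1}(u_A^\eps)^p + k(v^\eps)^{p-1}(u_B^\eps)^p,\]
and computing $\partial_t E_p^\eps$. This is done by testing the two equations for $u_A^\eps$ and $u_B^\eps$ in \eqref{eq:SKT-eps} against $h(v^\eps)^{p-1}(u_A^\eps)^{p-1}$ and $k(v^\eps)^{p-1}(u_B^\eps)^{p-1}$ respectively and summing. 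The weights $h^{p-1}, k^{p-1}$ are the algebraic choice that makes the singular couplings $\pm Q^\eps/\eps$ combine into
\[-\frac{1}{\eps}\int_\Omega \bigl((hu_A^\eps)^{p-1}-(ku_B^\eps)^{p-1}\bigr)\bigl(hu_A^\eps-ku_B^\eps\bigr),\]
which by the elementary inequality $(a^{p-1}-b^{p-1})(a-b) \soe c_p (a^{p/2}-b^{p/2})^2$ for $a, b \soe 0$ is bounded above by $-c_p\eps^{-1}\|(hu_A^\eps)^{p/2}-(ku_B^\eps)^{p/2}\|_{\fsL^2(\Omega)}^2$, yielding the third term of the conclusion.

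After Neumann integration by parts, the diffusion terms contribute the nonnegative quadratic forms $d_A(p-1)\int h^{p-1}(u_A^\eps)^{p-2}|\nabla u_A^\eps|^2$ and $(d_A+d_B)(p-1)\int k^{p-1}(u_B^\eps)^{p-2}|\nabla u_B^\eps|^2$, which via the chain-rule identity $|\nabla (u_C^\eps)^{p/2}|^2 = (p/2)^2 (u_C^\eps)^{p-2}|\nabla u_C^\eps|^2$ and the lower bound $h, k \soe h_0 > 0$ control the $\|\nabla (u_C^\eps)^{p/2}\|_{\fsL^2(\Omega_T)}^2$ norm. They also generate cross contributions of the form $\int h^{p-2}h'(v^\eps)(u_A^\eps)^{p-1}\nabla u_A^\eps \cdot \nabla v^\eps$; rewriting $(u_A^\eps)^{p-1}\nabla u_A^\eps = p^{-1}\nabla(u_A^\eps)^p$ and integrating by parts once more, these become expressions in $(u_A^\eps)^p \Delta v^\eps$ and $(u_A^\eps)^p|\nabla v^\eps|^2$. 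Together with the $\partial_t v^\eps$ correction coming from differentiating the weight $h(v^\eps)^{p-1}$ in time, they are controlled by reinjecting the $v^\eps$ equation $d_v\Delta v^\eps = \partial_t v^\eps - f_v v^\eps$ and the maximum-principle bound $\|v^\eps\|_{\fsL^\infty} \ioe \max(\|v_\init\|_{\fsL^\infty}, r_v/(2d_{22}))$, combined with standard parabolic regularity for the $v^\eps$ equation (whose source $f_v v^\eps$ is sublinear in $u^\eps$ thanks to the $\fsL^\infty$ bound on $v^\eps$); the structural identity $h + k = S$ from \eqref{eq:hk-relation} forces $h' = -k'$, providing beneficial cancellations between the $u_A^\eps$ and $u_B^\eps$ cross contributions. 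The Lotka--Volterra reaction produces $r_u\int h^{p-1}(u_A^\eps)^p$ and $-d_{12}\int h^{p-1}v^\eps(u_A^\eps)^p$, both bounded by $CE_p^\eps$, plus a dissipative piece $-d_{11}\int h^{p-1}(u_A^\eps+u_B^\eps)(u_A^\eps)^p$ which is simply discarded. Young's inequality applied to the residual $\int (u_A^\eps)^p |\nabla v^\eps|^2$-type terms then produces the remainder $\|u_A^\eps+u_B^\eps\|_{\fsL^{p+1}(\Omega_T)}^{p+1}$ on the right-hand side.

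Integrating the resulting differential inequality over $[0, T]$ and applying Gronwall's lemma yields the claimed estimate, with the constant $C_{T, p}$ depending only on $T, p, \Omega$ and $\|v_\init\|_{\fsL^\infty}$ through the pointwise bounds on $h, k, h', k'$ and the maximum-principle control on $v^\eps$. The main obstacle is the treatment of the mixed terms involving $\nabla v^\eps$ and $\partial_t v^\eps$: since no direct a priori bound on these quantities is assumed in the hypotheses, one must rely on parabolic regularity for $v^\eps$ (linked back to the integrability of $u^\eps$) to close the loop, which is precisely what forces the appearance of the $\fsL^{p+1}(\Omega_T)$ norm of $u^\eps$ on the right-hand side rather than on the left and makes the lemma a natural tool for the bootstrap in the proof of \cref{thm:DGNM-bound}.
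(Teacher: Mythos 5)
Your proposal is correct and follows essentially the same route as the paper's justification: the paper gives no proof of this lemma but attributes it to the entropy-dissipation argument of Lemma 3 in \cite{desvillettestrescases2015}, which is precisely the computation you carry out with the weighted functional $\int_\Omega h(v^\eps)^{p-1}(u_A^\eps)^p + k(v^\eps)^{p-1}(u_B^\eps)^p$, the sign of the combined $\frac1\eps$ terms via $(a^{p-1}-b^{p-1})(a-b)\soe c_p(a^{p/2}-b^{p/2})^2$, and the discarded dissipative Lotka--Volterra contributions. The only step requiring care is the control of the residual terms $\int (u_C^\eps)^p\,\Delta v^\eps$, $\int (u_C^\eps)^p\,\partial_t v^\eps$ and $\int (u_C^\eps)^p\,|\nabla v^\eps|^2$ by $1+\|u^\eps\|_{\fsL^{p+1}(\Omega_T)}^{p+1}$, which you correctly reduce to maximal parabolic regularity for the $v^\eps$ equation (plus a Gagliardo--Nirenberg interpolation for $|\nabla v^\eps|^2$), using that $\|f_v v^\eps\|_{\fsL^{p+1}}\lesssim 1+\|u^\eps\|_{\fsL^{p+1}}$ thanks to the maximum-principle bound on $v^\eps$.
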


Then, the proof uses a De Giorgi-Nash-Moser like argument. We consider the equation (with homogeneous Neumann boundary conditions and initial data $w_\init$)
\begin{equation}
  \label{eq:rough-scalar-pde}
  a(t,x) \partial_t w(t,x) - \Delta w(t,x) = f(t,x)
  \qquad \text{assuming moreover that }  \quad \partial_t w \ge 0,
\end{equation}
where \(a\) is a ``rough'' coefficient, i.e., we only suppose that
\begin{equation}
  \label{eq:assumption-a}
  0 < a_0 \le a \le c_0 a_0 < \infty ,
\end{equation}
for some constants \(a_0\) and \(c_0 \ge 1\).  
\par 
\begin{prop}[Theorem 4 of \cite{boutondesvillettes2025}]\label{thm:DGNM}
  We consider a bounded, $\Ccal^2$ domain \(\Omega \subset \R^d\).  Set \(T>0\), and
  \(p, q \in [1, \infty]\) such that $\gamma := 2 - \frac2p - \frac{d}q > 0$.  Then there
  exists a constant \(\alpha >0\) only depending on \(\gamma, d,c_0\), and a constant
  \(C_*\) depending on \(p,q,d, \Omega, T, a_0,c_0\) such that for any Lipschitz
  initial data $w_\init$, forcing data \(f \in \fsL^{p}((0,T]; \fsL^q(\Omega))\) and a
  coefficient \(a := a(t,x)\) satisfying the bound \eqref{eq:assumption-a}, a
  solution \(w \ge 0\) of \eqref{eq:rough-scalar-pde} over \((0,T] \times \Omega\) with
  homogeneous Neumann boundary data ($\vec n \cdot \nabla_x w = 0$ on $ (0,T] \times \partial\Omega$),
  lies in \(\Ccal^{0, \alpha}({\overline{\Omega}} \times[0, T]) \).  Moreover, the following
  estimate holds:
  \begin{equation*}
    \| w \|_{\Ccal^{0, \alpha}({\overline{\Omega}}\times [0,T] ) }
    \le C_* \left(
    \| f \|_{\fsL^p((0,T]; \fsL^{q}(\Omega))}
    + \| w_\init \|_{\mathrm{Lip}}
    \right).
  \end{equation*}
\end{prop}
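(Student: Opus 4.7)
The approach is to adapt the classical De Giorgi iteration for parabolic equations to the setting where the coefficient $a(t,x)$ in front of $\partial_t w$ is only $\fsL^\infty$, the key being to exploit the sign condition $\partial_t w \soe 0$. First, I would test \eqref{eq:rough-scalar-pde} against $(w-k)_+\eta^2$, where $\eta$ is a smooth space-time cutoff and $k\soe 0$ a truncation level, and integrate over $(0,t)\times\Omega$. The standard manipulation of the time term would require $\partial_t a$, which is unavailable, but the monotonicity rescues the situation: since $\partial_s[(w-k)_+^2]\soe 0$, the bounds \eqref{eq:assumption-a} give the one-sided estimate
\[
  \int_0^t\!\!\int_\Omega a\,\partial_s w\cdot (w-k)_+\eta^2 \soe \frac{a_0}{2}\int_\Omega (w-k)_+^2\eta^2\Big|_{s=0}^{s=t} - \frac{c_0 a_0}{2}\int_0^t\!\!\int_\Omega (w-k)_+^2\,\partial_s(\eta^2),
\]
while integration by parts of the $-\Delta w$ term produces $|\nabla(w-k)_+|^2\eta^2$ plus a controllable cross term and no boundary contribution, by the Neumann condition. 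This yields the standard parabolic Caccioppoli inequality on the level sets of $w$.

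Next, I would run the classical De Giorgi iteration. Using Gagliardo--Nirenberg and Hölder, the forcing contribution $\int f\,(w-k)_+\eta^2$ is controlled by $\|f\|_{\fsL^p\fsL^q}$ against a super-linear power of the natural parabolic norm of $(w-k)_+\eta$, and the assumption $\gamma = 2 - 2/p - d/q > 0$ is precisely the subcritical gap that makes the iteration converge on a shrinking sequence of parabolic cylinders. The outcome is a local $\fsL^\infty$ estimate of $w$ in terms of its $\fsL^2$ norm on a slightly larger cylinder plus $\|f\|_{\fsL^p\fsL^q}$. To pass from boundedness to Hölder continuity, I would then apply De Giorgi's oscillation lemma: if $\oscillation_{Q_R} w = M$ on a parabolic cylinder $Q_R$, a measure-decay analysis on the level sets $\{w > M/2\}$ combined with the Caccioppoli inequality produces $\oscillation_{Q_{R/2}} w \ioe \theta M + C R^\gamma \|f\|_{\fsL^p\fsL^q}$ for some $\theta = \theta(a_0,c_0,d) < 1$. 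The monotonicity $\partial_t w \soe 0$ is again helpful here since $w$ only crosses levels upward, which simplifies the time-propagation of the decay. Iterating yields Hölder continuity with an explicit exponent $\alpha = \alpha(\gamma,d,c_0)$.

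Boundary and initial data are then folded in. Near $\partial\Omega$ the $\Ccal^2$ regularity permits a boundary-flattening chart together with cutoffs adapted to the half-space, and the homogeneous Neumann condition ensures that all boundary integrals in the Caccioppoli inequality vanish, so the argument proceeds exactly as in the interior. For the initial data, subtracting a time-independent Lipschitz extension $\overline{w}$ of $w_\init$ from $w$ produces a problem for $w - \overline{w}$ with zero initial data, the same coefficient $a$, the property $\partial_t(w-\overline{w})\soe 0$ preserved, and a modified forcing $f + \Delta \overline{w}$. The extra term is controlled in $\fsL^p\fsL^q$ (for the admissible exponents) by $\|w_\init\|_{\Lip}$, which produces the Lipschitz initial-data norm on the right-hand side of the final estimate.

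The main obstacle is Step 1: because $a$ is only $\fsL^\infty$, the usual energy identity cannot be closed, and the one-sided monotonicity $\partial_t w \soe 0$ is genuinely essential --- without it, one would need either $\partial_t a \in \fsL^\infty$, or a completely different framework such as intrinsic scaling, and the present clean formulation would fail. Once this first step is secured, the remaining De Giorgi--Nash--Moser machinery is essentially classical, with $\gamma$ quantifying the forcing contribution in the iteration and the $\Ccal^2$ geometry of $\Omega$ handling the boundary.
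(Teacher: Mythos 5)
First, a point of comparison: the paper does not prove this Proposition at all — it is imported verbatim as Theorem~4 of \cite{boutondesvillettes2025} and used as a black box in the proof of \cref{thm:DGNM-bound}. So there is no in-paper argument to measure you against; I can only compare with the standard proof, which is indeed the route you take: a De Giorgi iteration in which the merely bounded coefficient $a$ is handled by exploiting $\partial_t w\soe 0$, so that $a\,\partial_s[(w-k)_+^2]$ has a sign and $a$ can be replaced by $a_0$ (resp.\ $c_0a_0$ for lower truncations) before integrating by parts in time. Your Step~1 and the role of $\gamma>0$ in the iteration are correct in spirit.

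There is, however, one genuinely broken step: the treatment of the initial data. Subtracting a time-independent Lipschitz extension $\overline{w}$ of $w_\init$ produces the forcing $f+\Delta\overline{w}$, and $\Delta\overline{w}$ is \emph{not} an $\fsL^q$ function for a generic Lipschitz $w_\init$ — it is only a distribution (e.g.\ $w_\init(x)=|x_1|$ gives a surface measure), so the claim that the extra term is ``controlled in $\fsL^p\fsL^q$ by $\|w_\init\|_{\Lip}$'' is false and the reduction to zero initial data collapses. One must instead prove H\"older continuity up to $\{t=0\}$ directly, treating the initial slice like a boundary portion in the oscillation argument: on half-cylinders $B_r\times(0,r^2)$ one has $\oscillation w \ioe \oscillation_{B_r} w_\init + (\text{interior decay}) \ioe 2r\|w_\init\|_{\Lip} + \theta M + Cr^\gamma\|f\|$, where the monotonicity (which gives $w\soe w_\init$ pointwise) again helps. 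Two smaller omissions worth flagging: (i) the iteration needs a global starting bound on $w$, which does not come for free from the local theory — it follows from integrating the equation over $\Omega$ (the Neumann condition kills the boundary term and $a\soe a_0$, $\partial_t w\soe 0$ give $\frac{\dd}{\dd t}\int_\Omega w \ioe a_0^{-1}\|f(t)\|_{\fsL^1}$); (ii) the oscillation lemma requires the Caccioppoli inequality for the lower truncations $(k-w)_+$ as well, which still follows from the monotonicity but with the upper bound $c_0a_0$ of $a$ replacing $a_0$, since the relevant integrand $\partial_s[(k-w)_+^2]$ is then nonpositive; a purely one-sided measure-decay argument as you describe it would not suffice on its own.
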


In order to use the information given by the two previous propositions, we will need the following interpolation estimate.

\begin{prop}[Proposition 9 of \cite{boutondesvillettes2025}]\label{thm:interpolation}
  Let $\Omega \subset \R^d$ be a bounded \(\Ccal^2\) domain, $\alpha \in (0,1)$, and assume  $p,q \in [1, \infty)$ satisfying $q \ge \frac{3 - \alpha}{2 - \alpha}\, p > \frac{d}{2 - \alpha}$. Then for any $u,w : \Omega \to \R$   such that $0 \le u \le\Delta w$, it holds that
  \begin{equation}\label{eq:interpolation}
    \|u \|_{\fsL^q(\Omega)}
    \le  C_{d,p,q, \alpha}\, \| {w} \|_{\Ccal^{0, \alpha}(\overline{\Omega}) }^{1-\frac{2-\alpha - d/q}{3- \alpha - d/p}}\,
    \| \nabla u\|_{\fsL^p(\Omega)}^{\frac{2- \alpha - d/q}{3- \alpha - d/p}}
    +  C_{d,p,q, \alpha}\, \| {w} \|_{\Ccal^{0, \alpha}(\overline{\Omega})},
  \end{equation}
  where $C_{d,p,q, \alpha}>0$ is a constant depending only on $d,p,q, \alpha$.
\end{prop}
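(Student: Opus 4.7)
The interpolation exponent $b := \frac{2-\alpha-d/q}{3-\alpha-d/p}$ is forced by scaling: under $x \mapsto x/\rho$ with $u$ invariant and $w$ rescaled as $w_\rho := \rho^2 w(\cdot/\rho)$ so that $u \ioe \Delta w$ is preserved, both sides of \eqref{eq:interpolation} must transform identically, which fixes this exponent. The strategy is therefore to produce two complementary estimates on $u$ at an arbitrary scale $r > 0$---one using $u \ioe \Delta w$ together with the $\Ccal^{0,\alpha}$ regularity of $w$, and one using $\nabla u \in \fsL^p$---and then optimise over $r$.

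\textbf{Local mean-value bound.} The pivotal step is to show, for every ball $B_r(x_0) \subset \Omega$,
\[
  \frac{1}{|B_r(x_0)|}\int_{B_r(x_0)} u \;\ioe\; C\, r^{\alpha - 2}\, \|w\|_{\Ccal^{0,\alpha}(\overline{\Omega})}.
\]
To establish it, solve the Dirichlet problem $\Delta \tilde w = u$ on $B_r(x_0)$ with $\tilde w = w$ on $\partial B_r(x_0)$. Since $\Delta(w - \tilde w) = \Delta w - u \soe 0$ and $w - \tilde w = 0$ on $\partial B_r(x_0)$, the maximum principle yields $w \ioe \tilde w$ in $B_r(x_0)$, hence $w(x_0) \ioe \tilde w(x_0)$. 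The Green's representation
\[
  \tilde w(x_0) = \int_{\partial B_r(x_0)} P_r(x_0, \zeta)\, w(\zeta)\, d\sigma(\zeta) - \int_{B_r(x_0)} G_r(x_0, y)\, u(y)\, dy
\]
(with $G_r, P_r \soe 0$ the Dirichlet Green's function and Poisson kernel of the ball) then gives
\[
  \int_{B_r(x_0)} G_r(x_0, y)\, u(y)\, dy \ioe \int_{\partial B_r(x_0)} P_r(x_0, \zeta)\, [w(\zeta) - w(x_0)]\, d\sigma(\zeta) \ioe [w]_{\Ccal^{0,\alpha}}\, (2r)^\alpha.
\]
Combined with the pointwise lower bound $G_r(x_0, y) \gtrsim r^{2-d}$ for $y \in B_{r/2}(x_0)$ (with logarithmic/linear analogues in dimensions $d = 2, 1$), this yields the mean-value bound. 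Points close to $\partial\Omega$ are handled by a half-ball/reflection argument compatible with the Neumann boundary data.

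\textbf{Poincaré--Riesz comparison and optimisation.} Combine the mean-value bound with the pointwise Poincaré inequality
\[
  \bigl|u(x) - u_{B_r(x)}\bigr| \ioe C \int_{B_r(x)} \frac{|\nabla u(y)|}{|x-y|^{d-1}}\, dy,
  \qquad u_{B_r(x)} := \frac{1}{|B_r(x)|}\int_{B_r(x)} u,
\]
and take the $\fsL^q(\Omega)$ norm. Estimating the truncated Riesz potential of $\nabla u$ via Hardy--Littlewood--Sobolev-type bounds (the hypothesis $q \soe \tfrac{3-\alpha}{2-\alpha}\, p > \tfrac{d}{2-\alpha}$ keeps $q$ in the admissible range, notably $q \ioe \tfrac{dp}{d-p}$ when $p < d$) leads to
\[
  \|u\|_{\fsL^q(\Omega)} \ioe C r^{\alpha - 2}\, \|w\|_{\Ccal^{0,\alpha}} + C r^{\delta}\, \|\nabla u\|_{\fsL^p(\Omega)},
\]
and balancing the two contributions in $r > 0$ produces the exponent $b$; the additive term $C\|w\|_{\Ccal^{0,\alpha}}$ absorbs the regime $r \simeq \operatorname{diam}\Omega$ where the optimisation saturates. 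The main obstacle is matching the \emph{sharp} exponent: the naive combination via the critical Sobolev--Poincaré inequality $\|u - u_{B_r}\|_{\fsL^q(B_r)} \ioe C r^{1+d/q-d/p}\,\|\nabla u\|_{\fsL^p(B_r)}$ only yields the weaker exponent $\frac{2-\alpha}{3-\alpha-d/p+d/q}$, so recovering the stated $b$ requires handling the truncated Riesz potential with a finer real-interpolation argument between the Morrey scale provided by Step~1 and the Sobolev scale from $\nabla u \in \fsL^p$.
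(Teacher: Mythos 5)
The paper does not contain a proof of this proposition: it is quoted verbatim as Proposition 9 of \cite{boutondesvillettes2025}, so there is nothing in the present paper to compare against, and I can only assess your argument on its own terms.

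Your Step~2 is sound: the Dirichlet comparison in an interior ball together with Green's representation correctly exploits $0\le u\le\Delta w$ and the H\"older regularity of $w$ to give the Morrey-type bound $\frac{1}{|B_r(x_0)|}\int_{B_r(x_0)}u\lesssim r^{\alpha-2}\|w\|_{\Ccal^{0,\alpha}}$ on balls with $B_r(x_0)\subset\Omega$. The genuine gap is in Step~3, and you in fact flag it yourself. Both routes you suggest --- the Poincar\'e--Sobolev inequality on balls, and the truncated Riesz potential estimate obtained by interpolating Young's inequality with Hardy--Littlewood--Sobolev --- yield, for a radius $r$ chosen \emph{uniformly in $x$}, the two-term bound $\|u\|_{\fsL^q}\lesssim r^{\alpha-2}\|w\|_{\Ccal^{0,\alpha}}+r^{1-d/p+d/q}\|\nabla u\|_{\fsL^p}$, and optimising in $r$ gives the exponent $\frac{2-\alpha}{3-\alpha-d/p+d/q}$ on $\|\nabla u\|_{\fsL^p}$, which is strictly larger than the stated $\frac{2-\alpha-d/q}{3-\alpha-d/p}$: after cross-multiplying, the difference is $(d/q)(1-d/p+d/q)>0$. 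This is not a cosmetic loss. In the paper's actual use of the result (inside the proof of \cref{thm:DGNM-bound}, with $d=4$, $p=2$, $q=2\frac{3-\alpha}{2-\alpha}$) the stated exponent $b=\frac{2-\alpha}{3-\alpha}$ satisfies $bq=2$, which is exactly what closes the De Giorgi--Moser bootstrap against $\|\nabla u^\eps\|_{\fsL^2(\Omega_T)}^2$; with your naive exponent one finds $bq=\frac{2(3-\alpha)^2}{7-6\alpha+\alpha^2}>2$, so the bootstrap does not close. Invoking ``a finer real-interpolation argument between the Morrey scale \ldots and the Sobolev scale'' is a restatement of what is still to be proved, not an argument. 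The missing idea is a pointwise optimisation: choose $r=r(x)$ depending on the local size of $\nabla u$, e.g.\ a Hedberg-type bound $I^r_1[|\nabla u|](x)\lesssim r\,M|\nabla u|(x)$ followed by $r(x)\sim\bigl(\|w\|_{\Ccal^{0,\alpha}}/M|\nabla u|(x)\bigr)^{1/(3-\alpha)}$, which at the endpoint $q=\frac{3-\alpha}{2-\alpha}p$ gives precisely $u(x)\lesssim\|w\|_{\Ccal^{0,\alpha}}^{1/(3-\alpha)}(M|\nabla u|(x))^{(2-\alpha)/(3-\alpha)}$ and then, via $\|Mf\|_{\fsL^p}\lesssim\|f\|_{\fsL^p}$, the claimed exponent. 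Your write-up never reaches this.

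A secondary issue: the proposition imposes no boundary condition on $u$ or $w$, so your appeal to a ``half-ball/reflection argument compatible with the Neumann boundary data'' near $\partial\Omega$ has no data to hook onto. You would need to say how the comparison-in-a-ball step survives for points close to $\partial\Omega$ without any boundary condition (e.g.\ through a Whitney covering and a summation over scales), and to check that the exponent is not degraded there.
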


\begin{proof}[Proof of \cref{thm:DGNM-bound}]
  The existence of strong solutions $u_A^\eps, u_B^\eps, v^\eps$ is provided by standard theorems for reaction-diffusion equations (cf. \cite{desvillettestrescases2015}). We are interested in the uniform bounds and we show that \cite{boutondesvillettes2025} provides them.\\
  We start by showing that $u_A^\eps$ and $u_B^\eps$ are bounded in $\fsL^{3+}(\Omega_T)$. \medskip

  We recall that $u^\eps := u_A^\eps + u_B^\eps$. We introduce the auxiliary quantity $m^\eps:= m^\eps(t,x)$ defined as the solution to
  \begin{equation*}
    \left\{
    \begin{aligned}
       & \partial_t m^\eps - \Delta m^\eps = u^\eps\,(d_{11}u^\eps + d_{12}v^\eps), \\
       & \nabla m^\eps\cdot \vec n(x) = 0  \quad \text{on } [0, T]\times\partial\Omega, \\
       & m^\eps(0,\cdot) = 0.
    \end{aligned}
    \right.
  \end{equation*}
  By the minimum principle, it is clear that $m^\eps\ge 0$.\\

  Defining $\nu^\eps := \frac{d_A u^\eps_A + (d_A + d_B) u_B^\eps +m^\eps}{u^\eps_A + u_B^\eps + m^\eps} $, we observe that $\min(1,d_A) \le \nu \le \max(1, d_A + d_B)$, and $u^\eps+ m^\eps$ satisfies the equation
  \begin{equation}\label{eq:initial-um}
    \left\{
    \begin{aligned}
       & \partial_t(u^\eps+m^\eps) - \Delta ( \nu^\eps \, (u^\eps+m^\eps) ) = r_u\,u^\eps \ioe r_u(u^\eps + m^\eps),          \\
       & \nabla (u^\eps + m^\eps) \cdot \vec n(x) = 0  \quad \text{on } [0, T]\times\partial\Omega, \\
       & (u^\eps+m^\eps)(0,x) \equiv u^\eps_{\init}(x).
    \end{aligned}
    \right.
  \end{equation}

  An immediate adaptation of the improved duality lemma \cite[Proposition 1.1] {canizo2014}shows that for some $q > 2$ that depends on $d_A$ and $d_B$
  \begin{equation*}
    \|u^\eps + m^\eps\|_{\fsL^{q}(\Omega\times[0, T])} \ioe C,
  \end{equation*}
  uniformly in $\eps$. By the non-negativity of $u^\eps$ and $m^\eps$, we obtain
  \begin{equation} \label{eq:dualityum}
    u^\eps, m^\eps\in \fsL^{2+}(\Omega\times[0, T]).
  \end{equation}

  We can also integrate \eqref{eq:initial-um}, to deduce that
  \begin{equation*}
    \int_\Omega u^\eps(t)+ m^\eps(t) \ioe r_u \int_0^t\int_\Omega u^\eps + m^\eps.
  \end{equation*}
  Hence, by Gronwall's lemma we obtain that $u^\eps, m^\eps \in \fsL^\infty([0, T], \fsL^1(\Omega))$.\medskip 

  We now consider the quantity $w^\eps := \int_0^t (d_A u^\eps_A + (d_A + d_B) u_B^\eps +m^\eps)$, and observe that $w^\eps\ge 0$ and $\partial_t w^\eps \ge 0$. Moreover $\Delta w^\eps = \int_0^t \Delta (d_A u^\eps_A + (d_A + d_B) u_B^\eps +m^\eps) = \int_0^t [\partial_t (u^\eps + m^\eps) - r_u\,u^\eps] = u^\eps + m^\eps - u^\eps_{in} - r_u \,\int_0^t u^\eps$.

  It satisfies therefore the parabolic equation
  \begin{equation*}
    (\nu^\eps)^{-1}\, \partial_t w^\eps - \Delta w^\eps =  u^\eps_{\init} +    r_u \,\int_0^t u^\eps,
  \end{equation*}
  together with the homogeneous Neumann boundary conditions, and the initial condition $w^\eps(0, \cdot) = 0$.\\

  Observing that thanks to \eqref{eq:dualityum}, $\int_0^t u^\eps$ is bounded in $\fsL^{\infty}([0,T]; \fsL^{2+}(\Omega))$, we see that we can use \cref{thm:DGNM} with $p =\infty$ and $q= 2+$ (recalling that $d \le 4$), and deduce from it that $\|w^\eps\|_{\Ccal^{0,\alpha}(\overline{\Omega}\times[0, T])} \le C$ (for some $\alpha, C>0$, independent of $\eps$).\\

  Then, we use the estimate
  \begin{align*}
  0 &\le u^\eps\\
  &\le u^\eps+m^\eps\\
  &= \Delta w^\eps  +  u^\eps_{\init} +    r_u \,\int_0^t u^\eps \le \Delta w^\eps  +  \|u^\eps_{\init}\|_{\infty} +    \frac{r_u}{|\Omega|}
    \,\int_0^t \int_{\Omega} u^\eps  + r_u\, \int_0^t \bigg[ u^\eps -  |\Omega|^{-1}  \int_{\Omega} u \bigg]\\
  &\le  \Delta \tilde{w^\eps},
    \end{align*}
  where
  \begin{equation*}
    \tilde{w^\eps} := w^\eps +  \frac{|x|^2}{2d}\, \bigg(\|u_{\init}^\eps\|_{\infty} +  \frac{r_u\, T}{|\Omega|} \,\|u^\eps\|_{L^{\infty}([0,T] ; L^1(\Omega))} \bigg) + r_u \, \Delta^{-1} \int_0^t \bigg[ u^\eps -  |\Omega|^{-1}  \int_{\Omega} u \bigg],
  \end{equation*}
  and  $\Delta^{-1}$ is defined as the operator going from the subset of $\fsL^2(\Omega)$ consisting of functions with $0$-mean value towards itself, which to a function associates the (unique) solution of the Poisson equation with (homogeneous) Neumann boundary conditions.
  \medskip

  Observing that $ \int_0^t \bigg[ u^\eps - |\Omega|^{-1} \int_{\Omega} u^\eps \bigg]$ is bounded in $\fsL^{\infty}([0,T]; L^{2+}(\Omega))$, we see that $\Delta^{-1} \int_0^t \bigg[ u^\eps - |\Omega|^{-1} \int_{\Omega} u^\eps \bigg]$ is bounded in $\fsL^{\infty}([0,T]; \fsW^{2,2+\delta}(\Omega))$ for some $\delta>0$ and therefore, in dimension $d \le 4$, thanks to a Sobolev embedding, in $\fsL^{\infty}([0,T]; \Ccal^{0,\alpha}(\overline{\Omega}))$, for some $\alpha >0$. The same holds for the function $(t,x) \mapsto \frac{|x|2}{2d}$.\\

  We now use  the one-sided interpolation \eqref{eq:interpolation} of \cref{thm:interpolation}, with $d=4$, $p=2$ and $q= 2\,\frac{3 - \alpha}{2-\alpha}$, that is
  \begin{equation*}
    \|u^\eps \|_{\fsL^{2\,\frac{3 - \alpha}{2-\alpha}} (\Omega)}^3
    \le   C\, \bigg( \,  \| \tilde{w}^\eps \|_{\Ccal^{0, \alpha}(\overline{\Omega})}^{\frac3{3-\alpha}}\,
    \| \nabla u^\eps\|_{\fsL^2(\Omega)}^{3\frac{2 -\alpha}{3-\alpha}} +
    \| \tilde{w}^\eps \|_{\Ccal^{0, \alpha}(\overline{\Omega})}^{3} \,\bigg)   ,
  \end{equation*}
  which holds for any $u^\eps, \tilde{w}^\eps : \Omega \subset \R^d \to \R$, such that
  $0 \le u^\eps \le\Delta \tilde{w}^\eps$, and $\alpha \in (0,1)$. Here we use it for a given time $t\in [0,T]$.

  Recalling \cref{thm:entropy-estimate} for $p=1$, we get that
  \begin{equation*}
    \begin{aligned}
      \|u^\eps\|_{\fsL^{2\,\frac{3 - \alpha}{2-\alpha}} (\Omega\times[0, T])}^{{2\,\frac{3 - \alpha}{2-\alpha}} }
      &\le C \, \int_0^T \bigg(\,  \| \tilde{w}^\eps \|_{\Ccal^{0, \alpha} (\overline{ \Omega})}^{\frac{2}{2 - \alpha}} \, \, \| \nabla u^\eps\|_{\fsL^2(\Omega)}^2 +    \| \tilde{w}^\eps \|_{\Ccal^{0, \alpha} ( \overline{\Omega})}^{\frac{3-\alpha}{1 - \alpha/2}} \bigg) \\
      & \le C (\| \nabla u^\eps\|_{\fsL^2(\Omega\times[0, T])}^{2} + 1)\\
      &\le C   +   C \, \|u^\eps\|_{\fsL^3(\Omega\times[0, T])}^3.
    \end{aligned}
  \end{equation*}
  Since ${2\,\frac{3 - \alpha}{2-\alpha}} > 3$, we see that $u^\eps$ is bounded in $\fsL^{3+}(\Omega\times[0, T])$.\medskip

  We can then upgrade this estimate to \eqref{eq:initial-estimates-uLq}. Let $p>0$, using \cref{thm:entropy-estimate}, we see that if $u^\eps$ is bounded in $\fsL^{p+1}$ then $u_C^\eps$ is bounded in $\fsL^\infty([0, T], \fsL^p(\Omega))$ and $\nabla((u_C^\eps)^\frac p2)$ is bounded in $\fsL^2([0, T], L^2(\Omega))$. By a Sobolev injection, we deduce that $u_C^\eps$ is bounded in $\fsL^\infty([0, T], \fsL^p(\Omega))\cap\fsL^2([0, T], \fsL^{\frac{pd}{d-2}}(\Omega))$ (if $d\ioe 2$, we can replace $\frac{2d}{d-2}$ by any $q < +\infty$). Thus by interpolation, we deduce that $u_C^\eps$ is bounded in $\fsL^{p\frac{d + 2}{d}}(\Omega_T)$. Hence, we can improve the regularity, provided $p\frac{d + 2}{d} > p + 1$ which is equivalent to $p > \frac{d}{2}$. Since we know that $u^\eps\in\fsL^{p+1}(\Omega_T)$ with $p=2+$, we can iterate this procedure and deduce \eqref{eq:initial-estimates-uLq}.\medskip 

  Using this estimate and \cref{thm:entropy-estimate}, we also deduce \eqref{eq:initial-estimates-nablau}.\medskip

  For the bounds on $v^\eps$, we first notice that the maximum principle implies that $v^\eps$ is bounded in $\fsL^\infty(\Omega_T)$. Using \eqref{eq:initial-estimates-uLq} for $q$ large enough and a maximal regularity argument, we deduce \eqref{eq:initial-estimates-v}. The bound $\nabla v^\eps\in\fsL^\infty(\Omega_T)$ in \eqref{eq:initial-estimates-vinf} comes from classical properties of the heat equation. For instance, we deduce form $\partial_t v^\eps - d_v \Delta v^\eps = f_v^\eps v^\eps \in\fsL^{\infty-}(\Omega)$, that by maximal regularity, we have for any $q<+\infty$, $\nabla v^\eps \in \fsW^{0, q}([0, T], \fsW^{1, q}(\Omega)) \cap \fsW^{1, q}([0, T], \fsW^{-1, q}(\Omega))$. By an interpolation argument, we deduce that for any $0 < s < 1$, we have $\nabla v^\eps \in \fsW^{(1-s), q}([0, T], \fsW^{2s - 1, q}(\Omega))$. We take any $\frac 12 < s < 1$ and $q < + \infty$ large enough to deduce by a Sobolev embedding the sought conclusion.
\end{proof}

\subsection{Part I: estimate in \texorpdfstring{$\fsL^\infty\fsH^1$ and $\fsL^2\fsH^2$}{LinftyH1 and L2H2}}\label{sec:firstconvergence}
In this section, we prove the first part of \cref{thm:SKTeps-regu}, namely the bounds in $\fsL^\infty([0, T], \fsH^1(\Omega))$ and in $\fsL^2([0, T], \fsH^2(\Omega))$. However, we expect $u_A^\eps$ to relax to $\frac{\tilde{k}}{S}u$. Thus even if $u^\eps_A+u^\eps_B$ is constant at the beginning, we formally expect a gradient $\nabla u_A = \nabla(\frac{\tilde{k}}{S}(u_A + u_B))$ to be created almost instantaneously, if $v$ (and thus $\tilde{k}$) is not homogeneous at the beginning. That is the reason why we will rather control the following quantities:
\begin{equation*}
  \Ecal^\eps_A(t):= \int_\Omega \tilde{h}^\eps|\nabla u^\eps_A - \frac{(u^\eps_A+u^\eps_B)}{S}\nabla \tilde{k}^\eps|^2, \qquad \Ecal^\eps_B(t):= \int_\Omega \tilde{k}^\eps|\nabla u^\eps_B - \frac{(u^\eps_A+u^\eps_B)}{S}\nabla \tilde{h}^\eps|^2.
\end{equation*}

We start with the

\begin{lemm}\label{thm:regu1}
  Let $T>0$, $d\ioe 4$, $\Omega$ be a smooth  bounded domain of $\R^d$,
  $d_1,d_2,\sigma >0$, $r_u,r_v\ge 0$, $d_{ij} > 0$ and $d_A, d_B, h, k$ satisfying \eqref{eq:hk-relation}. Let $u_{A, \init}, u_{B, \init}, v_\init \soe 0$ belong to $\Ccal^2(\overline\Omega)$ and compatible with the homogeneous Neumann boundary conditions. Let $u^\eps_A, u^\eps_B, v^\eps$ be the unique global strong solution (in $\Ccal^2(\R_+\times\overline\Omega)$) to \eqref{eq:SKT-eps} (with initial data $u_{A, \init}, u_{B, \init}, v_\init$).

  Let $\delta > 0$, then there exists a constant $C_{\delta}$ that depends only on the parameters of \eqref{eq:SKT-eps}, $\delta, d, T, \Omega$ and the norm of $u_{A, \init}, u_{B, \init}, v_\init$ in $\Ccal^2(\overline\Omega)$, but not on $\eps$, such that for all $0 < t <T$, one has
    \begin{align*}
     & \Ecal_A(t) + \Ecal_B(t) + \int_0^t\int_\Omega |\Delta u_A|^2 + \int_0^t\int_\Omega |\Delta u_B|^2 + \int_0^t\int_\Omega \frac{1}{\eps}|\nabla Q|^2                            \\
     & \ioe  C_\delta(\Ecal_A(0) + \Ecal_B(0)+ 1) + \delta (\|\nabla u_A \|_{\fsL^{\infty}([0, t], \fsL^{2}(\Omega))}^2 + \|\nabla u_B \|^2_{\fsL^{\infty}([0, t], \fsL^{2}(\Omega))}).
  \end{align*}

  Furthermore, there exists a constant $C$ (with the same dependence as $C_\delta$, except $\delta$) such that
  \begin{equation*}
    \|\nabla u_A^\eps \|_{\fsL^{\infty}([0, T], \fsL^{2}(\Omega))} + \|\nabla u_B^\eps \|_{\fsL^{\infty}([0, T], \fsL^{2}(\Omega))}\ioe C,
  \end{equation*}
  \begin{equation*}
    \|u_A^\eps\|_{\fsL^2([0, T], \fsH^2(\Omega))} + \|u_B^\eps\|_{\fsL^2([0, T], \fsH^2(\Omega))} \ioe C,
  \end{equation*}
  and
  \begin{equation*}
    \int_0^T\int_\Omega \frac{1}{\eps}|\nabla Q^\eps|^2 \ioe C.
  \end{equation*}
\end{lemm}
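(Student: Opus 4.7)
The plan is to compute $\tfrac{d}{dt}(\Ecal_A^\eps + \Ecal_B^\eps)$ and exploit a subtle cancellation between the two stiff reaction terms. Setting $W_A^\eps := \nabla u_A^\eps - \tfrac{u^\eps}{S}\nabla \tilde k^\eps$ and $W_B^\eps := \nabla u_B^\eps - \tfrac{u^\eps}{S}\nabla \tilde h^\eps$, so that $\Ecal_A^\eps = \int_\Omega \tilde h^\eps |W_A^\eps|^2$ and $\Ecal_B^\eps = \int_\Omega \tilde k^\eps |W_B^\eps|^2$, the constraint $h+k=S$ gives $\nabla \tilde h^\eps = -\nabla \tilde k^\eps$, from which a short calculation yields the key algebraic identity
\[
  \nabla Q^\eps \;=\; \tilde k^\eps W_B^\eps \;-\; \tilde h^\eps W_A^\eps.
\]
This identity is the algebraic reason behind the choice of $\Ecal_A^\eps,\Ecal_B^\eps$: when the stiff term $\tfrac{1}{\eps}Q^\eps$ coming from $\partial_t u_A^\eps$ is paired with $2 \tilde h^\eps W_A^\eps$ after a spatial integration by parts, and the opposite $-\tfrac{1}{\eps}Q^\eps$ from the $u_B^\eps$ equation is paired with $2 \tilde k^\eps W_B^\eps$, the two contributions combine to $-\tfrac{2}{\eps}\int_\Omega|\nabla Q^\eps|^2$, a term with the right sign.

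The remaining leading contribution in $\tfrac{d}{dt}\Ecal_A^\eps$ is $2 d_A\int \tilde h^\eps W_A^\eps\cdot\nabla\Delta u_A^\eps$. Two integrations by parts — whose boundary terms vanish thanks to $\nabla u_A^\eps\cdot n = 0$ and $\nabla \tilde k^\eps\cdot n = k'(v^\eps)\nabla v^\eps\cdot n = 0$ — produce the good term $-2 d_A\int \tilde h^\eps|\Delta u_A^\eps|^2$, plus remainders of the form $\int \Delta u_A^\eps \cdot(\ldots)$ in which the dotted factor involves only $\nabla \tilde h^\eps$, $\nabla u^\eps$, $u^\eps$ and $\Delta \tilde k^\eps$. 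Treating $\Ecal_B^\eps$ in the same way and using $\tilde h^\eps,\tilde k^\eps\ge h_0>0$, I would obtain a differential inequality
\[
\tfrac{d}{dt}(\Ecal_A^\eps + \Ecal_B^\eps) + c_0\bigl(\|\Delta u_A^\eps\|_{\fsL^2}^2 + \|\Delta u_B^\eps\|_{\fsL^2}^2\bigr) + \tfrac{2}{\eps}\|\nabla Q^\eps\|_{\fsL^2}^2 \;\ioe\; R^\eps(t),
\]
where $R^\eps(t)$ collects all lower-order remainders: integration-by-parts corrections, the time-derivative contribution $\int\partial_t\tilde h^\eps|W_A^\eps|^2$, the first-order reaction terms $\int\tilde h^\eps W_A^\eps\cdot\nabla(f_u u_A^\eps)$, and the terms arising from $\tfrac{\partial_t u^\eps}{S}\nabla \tilde k^\eps$ and $\tfrac{u^\eps}{S}\nabla\partial_t\tilde k^\eps$ in $\partial_t W_A^\eps$ (and the analogues for $W_B^\eps$).

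All factors in $R^\eps$ different from $\Delta u_A^\eps,\Delta u_B^\eps,W_A^\eps,W_B^\eps$ are uniformly controlled in adequate $\fsL^p$-spaces by \cref{thm:DGNM-bound}: in particular $\nabla v^\eps\in\fsL^\infty$, $\nabla^2 v^\eps,\partial_t v^\eps\in\fsL^q$ for every $q<\infty$, and $u^\eps\in\fsL^\infty\fsL^q$. Any occurrence of $\Delta u_A^\eps$ or $\Delta u_B^\eps$ in the remainders is absorbed by Young's inequality with a small constant into the good terms on the left; the factor $\partial_t u^\eps$, when it appears, is eliminated using the PDE $\partial_t u^\eps = d_A\Delta u_A^\eps + (d_A+d_B)\Delta u_B^\eps + f_u u^\eps$ and treated likewise. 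A Gronwall argument in time then produces the first displayed inequality of the lemma, where the $\delta$-term on the right-hand side matches exactly the genuinely $|\nabla u^\eps|^2$-type contributions that cannot be re-expressed through $\Ecal_A^\eps,\Ecal_B^\eps$ alone. The second batch of bounds follows from the pointwise equivalence
\[
  \|\nabla u_A^\eps\|_{\fsL^2}^2 \;\ioe\; 2\Ecal_A^\eps/h_0 + C\|u^\eps\|_{\fsL^2}^2\,\|\nabla v^\eps\|_{\fsL^\infty}^2,
\]
which, combined with the uniform estimates of \cref{thm:DGNM-bound}, lets us take $\delta$ small enough to absorb the $\delta$-term into the left-hand side; the $\fsL^2\fsH^2$-bound is then standard elliptic regularity for $-\Delta$ with homogeneous Neumann data, and the bound on $\tfrac{1}{\eps}\|\nabla Q^\eps\|_{\fsL^2(\Omega_T)}^2$ is already contained in the differential inequality.

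The main obstacle I anticipate is the careful bookkeeping of the terms in $R^\eps$ involving $\partial_t v^\eps$, or, through the PDE, $\Delta u^\eps$: these are only known in $\fsL^q(\Omega_T)$ rather than pointwise in $t$, so the differential inequality must be integrated in time from the outset, and the H\"older balance between such factors and $W_A^\eps,W_B^\eps$ (controlled in $\fsL^\infty\fsL^2$ by $\Ecal_A^\eps,\Ecal_B^\eps$) has to be performed delicately, sometimes after an additional Sobolev embedding powered by the control on $\Delta u_A^\eps,\Delta u_B^\eps$ — this is precisely where the dimensional restriction $d\ioe 4$ is used.
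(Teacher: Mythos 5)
Your proposal follows essentially the same route as the paper: the same energy functionals, the same algebraic identity $\nabla Q^\eps=\tilde k^\eps W_B^\eps-\tilde h^\eps W_A^\eps$ producing the dissipative term $-\tfrac{2}{\eps}\int|\nabla Q^\eps|^2$, integration by parts to extract $-2d_A\int\tilde h^\eps|\Delta u_A^\eps|^2$, absorption of the remainders via Young's inequality and the uniform bounds of \cref{thm:DGNM-bound}, the space--time interpolation (using the Laplacian control and $d\ioe 4$) that generates the $\delta\|\nabla u_C^\eps\|_{\fsL^\infty\fsL^2}^2$ term, and finally the comparison $\|\nabla u_A^\eps\|_{\fsL^2}^2\lesssim \Ecal_A^\eps+C$ to close the estimate by taking $\delta$ small. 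The only cosmetic difference is your invocation of Gronwall, which the paper avoids since $\|\nabla u_C^\eps\|_{\fsL^2(\Omega_T)}$ is already controlled by \cref{thm:DGNM-bound}.
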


\begin{proof}
  For readability we will drop the super-script $\eps$ in the rest of the proof. Let $0<t<T$, we first compute
  \begin{align*}
    \frac{\dd \Ecal_A}{\dd t}(t)
     & = \int_\Omega \partial_t \tilde{h}|\nabla u_A -  \frac{u_A+u_B}{S}\nabla \tilde{k}|^2                                                                                                                                      \\
     & +2\int_\Omega \tilde{h}(\nabla u_A - \frac{u_A+u_B}{S}\nabla \tilde{k})\cdot\left(\partial_t \nabla u_A - (\partial_t\nabla \tilde{k}) \frac{u_A+u_B}{S} - \nabla \tilde{k}\partial_t\left(\frac{u_A+u_B}{S}\right)\right) \\
     & = \int_\Omega \partial_t \tilde{h} |\nabla u_A - \frac{u_A+u_B}{S}\nabla \tilde{k}|^2                                                                                                                                      \\
     & +2\int_\Omega \tilde{h}(\nabla u_A - \frac{u_A+u_B}{S}\nabla \tilde{k})\cdot\Big(d_A\nabla\Delta u_A + \nabla(f_u u_A) + \frac 1\eps \nabla Q                                                                              \\
     & - (\partial_t\nabla \tilde{k}) \frac{u_A+u_B}{S} - \frac1S \nabla \tilde{k}(d_A\Delta u_A +(d_A + d_B)\Delta u_B + f_u(u_A+u_B))\Big)                                                                                      \\
     & = \int_\Omega \partial_t \tilde{h} |\nabla u_A - \frac{u_A+u_B}{S}\nabla \tilde{k}|^2                                                                                                                                      \\
     & - 2\int_\Omega \tilde{h}d_A|\Delta u_A|^2 - 2\int_\Omega d_A\nabla \tilde{h}\cdot\nabla u_A\Delta u_A - 2\int_\Omega d_A\tilde{h}\nabla \tilde{k}\frac{u_A+u_B}{S}\cdot\nabla\Delta u_A                                    \\
     & + 2\int_\Omega \tilde{h}\frac{1}{\eps}\nabla Q\cdot(\nabla u_A - \frac{u_A+u_B}{S}\nabla \tilde{k})                                                                                                                        \\
     & + 2\int_\Omega \tilde{h}(\nabla u_A - \frac{u_A+u_B}{S}\nabla \tilde{k})\cdot\Big(\nabla(f_u u_A) - (\partial_t\nabla \tilde{k}) \frac{u_A+u_B}{S}                                                                         \\
     & -\frac{1}{S}\nabla \tilde{k}(d_A\Delta u_A +(d_A + d_B)\Delta u_B+ f_u(u_A+u_B))\Big)                                                                                                                                      \\
     & =:  - 2\int_\Omega \tilde{h}d_A|\Delta u_A|^2 + 2\int_\Omega \tilde{h}\frac{1}{\eps}\nabla Q\cdot(\nabla u_A - \frac{u_A+u_B}{S}\nabla \tilde{k}) + I_A.
  \end{align*}

  This expression has two important features: the term in $\frac{1}{\eps}$ will turn out to be non-positive (after we sum it with the corresponding term in the computation of $\frac{\dd\Ecal_B}{\dd t}$) and the higher-order term (in space regularity) $- \int_\Omega \tilde{h}d_A|\Delta u_A|^2$ is also non-positive. Thus $I_A$ only acts as a lower-order term (in $\eps$ and in terms of derivatives). We can similarly define the quantity $I_B$.\medskip

  We integrate the previous expression over $[0,t]$ and sum it with its counter-part $\Ecal_B$. We also use the assumption $h, k > h_0$:
  \begin{align*}
     & \Ecal_A(t) + \Ecal_B(t) + 2d_Ah_0\int_0^t\int_\Omega |\Delta u_A|^2 + 2(d_A+d_B)h_0\int_0^t\int_\Omega |\Delta u_B|^2                                                                               \\
     & \ioe  \Ecal_A(0) + \Ecal_B(0)+ \int_0^t (I_A + I_B)                                                                                                                                                 \\
     & +\frac{2}{\eps}\int_0^t\int_\Omega \nabla Q\cdot\left[\tilde{h}\nabla u_A - \tilde{h}\frac{u_A+u_B}{S}\nabla \tilde{k} - (\tilde{k}\nabla u_B - \tilde{k}\frac{u_A+u_B}{S}\nabla \tilde{h})\right].
  \end{align*}

  We first start with the term in $\frac{1}{\eps}$. We recall that $\tilde{h}+\tilde{k} = S$ so that $\nabla \tilde{h} + \nabla \tilde{k} = 0$
  \begin{align*}
     & \frac{1}{\eps}\int_0^t\int_\Omega \nabla Q\cdot\left[\tilde{h}\nabla u_A - \tilde{h}\frac{u_A+u_B}{S}\nabla \tilde{k} - (\tilde{k}\nabla u_B - \tilde{k}\frac{u_A+u_B}{S}\nabla \tilde{h})\right]              \\
     & =\frac{1}{\eps}\int_0^t\int_\Omega \nabla Q\cdot\left[\tilde{h}\nabla u_A + \frac{\tilde{k}+\tilde{h}}{S}\nabla \tilde{h} u_A - \tilde{k}\nabla u_B - \frac{\tilde{k}+\tilde{h}}{S}\nabla \tilde{k} u_B\right] \\
     & = - \int_0^t\int_\Omega \frac{1}{\eps}|\nabla Q|^2 \ioe 0.
  \end{align*}

  In the last line we used $\nabla Q = \tilde{k}\nabla u_B - \tilde{h}\nabla u_A + \nabla \tilde{k} u_B - \nabla \tilde{h} u_A$.\medskip

  We will denote by $C$ a constant with the same dependency as in the statement of the Proposition, this constant may change from line to line. We also introduce a small constant $\eta > 0$ to be chosen later (and that may depend on the same parameters as $C$).

  We can then bound $\int_0^t (I_A + I_B)$. Thanks to the regularity of $k, h$, we deduce that $\tilde{k}, \tilde{h}$ respect the same estimates as $v$ in \cref{thm:DGNM-bound}.\medskip

  In the following computations, we will need to bound $\|\nabla u \|_{\fsL^{2+}(\Omega_t)}$, but we only control $\|\nabla u \|_{\fsL^{2}(\Omega_t)}$ by \cref{thm:DGNM-bound}. To be able to control this quantity, we will interpolate with $\|\Delta u \|_{\fsL^{2}(\Omega_t)}$ in space and with $\|\nabla u \|_{\fsL^{\infty}([0, T], \fsL^2(\Omega))}$ in time.

  Let $\kappa >0$ be a small parameter. Then by interpolation we first obtain
  $$\|\nabla u \|^{2+\kappa}_{\fsL^{2+\kappa}(\Omega)} \ioe \|\nabla u \|_{\fsL^{2}(\Omega)}^{(1 - \theta)(2+\kappa)}\|\nabla u \|_{\fsL^{2*}(\Omega)}^{\theta(2+\kappa)},$$
  with $2^* := \frac{2d}{d- 2}$ (and any $q<+\infty$ large enough if $d\ioe 2$) the Sobolev exponent and $\theta = \frac{\frac 12 - \frac{1}{2 +\kappa}}{\frac 12 - \frac{1}{2^*}}$. We notice that $\theta (2 + \kappa) = \frac{\kappa}{2}(\frac 12 - \frac{1}{2^*})^{-1} < 2$ (for $\kappa$ small enough). Thus, thanks to Young's inequality, we obtain the estimate $\|\nabla u \|^{2+\kappa}_{\fsL^{2+\kappa}(\Omega)} \ioe \eta \|\nabla u \|^{2}_{\fsL^{2^*}(\Omega)} + C_\eta \|\nabla u \|^{s}_{\fsL^{2}(\Omega)}$, where $s = 2 \frac{(1 - \theta)}{(1 - \theta) - \kappa} + \frac{\kappa(1 - \theta)}{(1 - \theta) - \kappa} > 2$.

  Thus, thanks to a Sobolev injection (and elliptic regularity) we get
  \begin{align*}
    \int_0^t\|\nabla u \|^{2+}_{\fsL^{2+}(\Omega)}
     & \ioe \int_0^t \left(\eta \|\Delta u(t) \|^{2}_{\fsL^{2}(\Omega)} + C_\eta \|\nabla u \|^{2+}_{\fsL^{2}(\Omega)}\right)\nonumber                                             \\
     & \ioe  \eta \|\Delta u\|_{\fsL^{2}(\Omega_t)}^2 + C_\eta \|\nabla u \|_{\fsL^{\infty}([0, t], \fsL^{2}(\Omega))}^{0+}\int_0^t \|\nabla u \|^{2}_{\fsL^{2}(\Omega)}\nonumber.
  \end{align*}

  Finally we can use Young's inequality again. Let $\delta > 0$, then there exists a constant $C_\delta >0$ such that for any $x >0$, we have $x^{0+} \ioe \delta x^{2} + C_{\delta, 0+}$. We obtain
  \begin{align}\label{eq:initial-bound2plus}
    \|\nabla u \|^{2}_{\fsL^{2+}(\Omega_t)}
     & \ioe \|\nabla u \|^{2+}_{\fsL^{2+}(\Omega_t)} + 1\nonumber \\
     & \ioe \eta\|\Delta u\|_{\fsL^{2}(\Omega_t)}^2 + \delta \|\nabla u \|_{\fsL^{\infty}([0, t], \fsL^{2}(\Omega))}^2\|\nabla u \|_{\fsL^{2}(\Omega_t)}^{2}\\
     &+ C_{\eta, \delta, T, 2+}\|\nabla u \|_{\fsL^{2}(\Omega_t)}^{2} + 1\nonumber.
  \end{align}
  The same result holds for $u_C$.\medskip

  We recall the estimates obtained in \cref{thm:DGNM-bound}. We can start to bound all the terms appearing in $I_A$:
  \begin{align*}
     & \int_0^t\int_\Omega \partial_t \tilde{h} \left|\nabla u_A - \frac{u_A+u_B}{S}\nabla \tilde{k}\right|^2                                                                                                   \\
     & \ioe 2 \int_0^t\int_\Omega \partial_t \tilde{h} (|\nabla u_A|^2 + |\frac{u_A+u_B}{S}\nabla \tilde{k}|^2)                                                                                      \\
     & \lesssim \|\partial_t \tilde{h}\|_{\fsL^{\infty-}(\Omega_t)}(\|\nabla u_A\|_{\fsL^{2+}(\Omega_t)}^2+  \|u_A + u_B\|_{\fsL^{2+}(\Omega_t)}^2 \|\nabla \tilde{k}\|_{\fsL^{\infty}(\Omega_t)}^2) \\
     & \ioe \eta \|\Delta u_A\|_{\fsL^{2}(\Omega_t)}^2 + \delta \|\nabla u_A \|^2_{\fsL^{\infty}([0, t], \fsL^{2}(\Omega))} + C_{\eta, \delta},
  \end{align*}
  where we used \eqref{eq:initial-bound2plus} to obtain the last line (up to changing $\eta, \delta$).

  For the next term, we simply use Young's inequality,
  \begin{align*}
    \int_0^t\int_\Omega \nabla \tilde{h}\cdot\nabla u_A\Delta u_A
     & \ioe \eta \|\Delta u_A\|_{\fsL^2(\Omega_t)}^2 + C_\eta \|\nabla u_A\|_{\fsL^{2}(\Omega_t)}^2\|\nabla \tilde{h}\|_{\fsL^{\infty}(\Omega_t)}^2 \\
     & \ioe \eta \|\Delta u_A\|_{\fsL^2(\Omega_t)}^2 + C_\eta.
  \end{align*}

  For the next term, we can also use Young's inequality:
  \begin{align*}
    \int_0^t\int_\Omega (u_A+u_B)\tilde{h}\nabla \tilde{k}\cdot \nabla\Delta u_A
     & = -\int_0^t\int_\Omega \Delta u_A \nabla\cdot((u_A+u_B)\tilde{h}\nabla \tilde{k})                                                                                                           \\
     & \ioe \eta \|\Delta u_A\|_{\fsL^2(\Omega_t)}^2 + C_\eta \|\nabla\cdot((u_A+u_B)\tilde{h}\nabla \tilde{k})\|_{\fsL^2(\Omega_t)}^2                                                             \\
     & \ioe \eta \|\Delta u_A\|_{\fsL^2(\Omega_t)}^2 + C_\eta (\|\tilde{h}\|^2_{\fsL^\infty(\Omega_t)}\|\nabla \tilde{k}\|_{\fsL^{\infty}(\Omega_t)}^2\|\nabla(u_A + u_B)\|_{\fsL^{2}(\Omega_t)}^2 \\
     & + \|\nabla \tilde{h}\|^2_{\fsL^\infty(\Omega_t)}\|\nabla \tilde{k}\|_{\fsL^{\infty}(\Omega_t)}^2\|(u_A + u_B)\|_{\fsL^{2}(\Omega_t)}^2                                                      \\
     & + \|\tilde{h}\|^2_{\fsL^\infty(\Omega_t)}\|\nabla \nabla \tilde{k}\|_{\fsL^{4}(\Omega_t)}^2\|(u_A + u_B)\|_{\fsL^{4}(\Omega_t)}^2)                                                          \\
     & \ioe \eta \|\Delta u_A\|_{\fsL^2(\Omega_t)}^2 + C_\eta.
  \end{align*}

  Let us start with the easier part of the last term:
  \begin{align*}
     & \int_\Omega \tilde{h}\frac{u_A+u_B}{S}\nabla \tilde{k}\cdot \left[ - \frac{1}{S}\nabla \tilde{k}(d_A\Delta u_A +(d_A + d_B)\Delta u_B + f_u(u_A + u_B))\right] \\
     & \lesssim \|\tilde{h}\|_{\fsL^\infty(\Omega_t)}\|u_A + u_B\|_{\fsL^{2}(\Omega_t)}\|\nabla \tilde{k}\|_{\fsL^{\infty}(\Omega_t)}                                 \\
     & \times\left(\|\nabla \tilde{k}(d_A\Delta u_A +(d_A + d_B)\Delta u_B)\|_{\fsL^{2}(\Omega_t)} + \|f_u(u_A + u_B)\|_{\fsL^{2}(\Omega_t)}\right).
  \end{align*}

  Moreover, we can bound
  \begin{align*}
    &\|\nabla \tilde{k}(d_A\Delta u_A +(d_A + d_B)\Delta u_B)\|_{\fsL^{2}(\Omega_t)}\\
     & \ioe \|\nabla \tilde{k}\|_{\fsL^{\infty}(\Omega_t)}\|d_A\Delta u_A +(d_A + d_B)\Delta u_B\|_{\fsL^{2}(\Omega_t)}\\
     & \ioe \eta \|\Delta u_A\|_{\fsL^{2}(\Omega_t)}^2 + \eta \|\Delta u_B\|_{\fsL^{2}(\Omega_t)}^2 + C_\eta \|\nabla \tilde{k}\|_{\fsL^{\infty}(\Omega_t)}^2 \\
     & \ioe \eta \|\Delta u_A\|_{\fsL^{2}(\Omega_t)}^2 + \eta \|\Delta u_B\|_{\fsL^{2}(\Omega_t)}^2 + C_\eta.
  \end{align*}

  Hence, we obtain
  \begin{align*}
     & \int_\Omega \tilde{h}\frac{u_A+u_B}{S}\nabla \tilde{k}\cdot \left[ - \frac{1}{S}\nabla \tilde{k}(d_A\Delta u_A +(d_A + d_B)\Delta u_B + f_u(u_A + u_B))\right] \\
     & \ioe \eta \|\Delta u_A\|_{\fsL^{2}(\Omega_t)}^2 + \eta \|\Delta u_B\|_{\fsL^{2}(\Omega_t)}^2 + C_\eta.
  \end{align*}

  We can go to the next term
  \begin{align*}
    \int_0^t\int_\Omega \tilde{h}\frac{u_A+u_B}{S}\nabla \tilde{k} \cdot\nabla(f_u u_A)
     & = - \int_0^t\int_\Omega \nabla\cdot\left(\tilde{h}\frac{u_A+u_B}{S}\nabla \tilde{k}\right) f_u u_A                                             \\
     & \ioe \|f_u u_A\|_{\fsL^{2}(\Omega_t)}                                                                                                          \\
     & \times (\|\nabla \tilde{h}\|_{\fsL^{\infty}(\Omega_t)}\|\nabla \tilde{k}\|_{\fsL^{\infty}(\Omega_t)}\|\frac{u_A+u_B}{S}\|_{\fsL^{2}(\Omega_t)} \\
     & + \|\tilde{h}\|_{\fsL^{\infty}(\Omega_t)}\|\nabla\nabla \tilde{k}\|_{\fsL^{4}(\Omega_t)}\|\frac{u_A+u_B}{S}\|_{\fsL^{4}(\Omega_t)}             \\
     & + \frac{2}{S}\| \tilde{h}\|_{\fsL^{\infty}(\Omega_t)}\|\nabla \tilde{k}\|_{\fsL^{\infty}(\Omega_t)}\|\nabla(u_A+u_B)\|_{\fsL^{2}(\Omega_t)})   \\
     & \ioe C.
  \end{align*}

  In the same way, we have
  \begin{align*}
    \int_0^t\int_\Omega \tilde{h}\left(\frac{u_A+u_B}{S}\right)^2\nabla \tilde{k} \cdot \partial_t\nabla \tilde{k}
     & = - \int_0^t\int_\Omega \nabla\cdot\left(\tilde{h}\left(\frac{u_A+u_B}{S}\right)^2\nabla \tilde{k}\right) \partial_t \tilde{k}\\
     & \ioe \|\partial_t \tilde{k}\|_{\fsL^{2+}(\Omega_t)} \\
     & \times (\|\nabla \tilde{h}\|_{\fsL^{\infty}(\Omega_t)}\|\nabla \tilde{k}\|_{\fsL^{\infty}(\Omega_t)}\|\left(\frac{u_A+u_B}{S}\right)^2\|_{\fsL^{2+}(\Omega_t)}                                    \\
     & + \|\tilde{h}\|_{\fsL^{\infty}(\Omega_t)}\|\nabla\nabla \tilde{k}\|_{\fsL^{4}(\Omega_t)}\|\left(\frac{u_A+u_B}{S}\right)^2\|_{\fsL^{4+}(\Omega_t)}                                                \\
     & + \frac{2}{S^2}\| \tilde{h}\|_{\fsL^{\infty}(\Omega_t)}\|\nabla \tilde{k}\|_{\fsL^{\infty}(\Omega_t)}\|\nabla(u_A+u_B)\|_{\fsL^{2}(\Omega_t)}\|u_A+u_B\|_{\fsL^{\infty - }(\Omega_t)}) \\
     & \ioe C.
  \end{align*}

  Similarly, we have
  \begin{align*}
    \int_0^t\int_\Omega \tilde{h}\nabla u_A\cdot \nabla(f_u u_A)
     & = - \int_0^t\int_\Omega \nabla\cdot(\tilde{h}\nabla u_A) f_u u_A                                                                                                                      \\
     & \ioe \|f_u u_A\|_{\fsL^{2}}(\|\tilde{h}\|_{\fsL^\infty(\Omega_t)}\|\Delta u_A\|_{\fsL^{2}(\Omega_t)}+\|\nabla \tilde{h}\|_{\fsL^\infty(\Omega_t)}\|\nabla u_A\|_{\fsL^{2}(\Omega_t)}) \\
     & \ioe \eta \|\Delta u_A\|^2_{\fsL^{2}(\Omega_t)} + C_\eta.
  \end{align*}

  For the next term, we can proceed in a similar way:
  \begin{align*}
    \int_0^t\int_\Omega \tilde{h}\nabla u_A\cdot (\partial_t\nabla \tilde{k}) (u_A + u_B)
     & = - \int_0^t\int_\Omega \nabla\cdot\left[\tilde{h}\nabla u_A(u_A + u_B)\right] (\partial_t \tilde{k})                                  \\
     & \ioe \|\partial_t \tilde{k}\|_{\fsL^{\infty-}(\Omega_t)}                                                                               \\
     & \times (\|\nabla \tilde{h}\|_{\fsL^{\infty}(\Omega_t)}\|\nabla u_A\|_{\fsL^{2}(\Omega_t)}\|u_A + u_B\|_{\fsL^{2+}(\Omega_t)}           \\
     & + \|\tilde{h}\|_{\fsL^{\infty}(\Omega_t)}\|\Delta u_A\|_{\fsL^{2}(\Omega_t)}\|u_A + u_B\|_{\fsL^{2+}(\Omega_t)}                        \\
     & + \| \tilde{h}\|_{\fsL^{\infty}(\Omega_t)}\|\nabla u_A\|_{\fsL^{2+}(\Omega_t)}\|\nabla(u_A + u_B)\|_{\fsL^{2}(\Omega_t)})              \\
     & \ioe \eta \|\Delta u_A\|^2_{\fsL^{2}(\Omega_t)} + \delta \|\nabla u_A \|_{\fsL^{\infty}([0, t], \fsL^{2}(\Omega))}^2 + C_{\eta, \delta}.
  \end{align*}
  To obtain the last line we used \eqref{eq:initial-bound2plus} (to bound $\|\nabla u_A\|_{\fsL^{2+}(\Omega_t)}$) as well as Young's inequality (to bound the term with $\|\Delta u_A\|_{\fsL^{2}(\Omega_t)}$).\medskip

  Finally we have a last term to control:
  \begin{align*}
     & \int_0^t\int_\Omega \tilde{h}\nabla u_A\cdot\nabla \tilde{k}(d_A\Delta u_A +(d_A + d_B)\Delta u_B + f_u(u_A + u_B))                                                                                                                                 \\
     & \lesssim \|\tilde{h}\|_{\fsL^\infty(\Omega_t)}\|\nabla \tilde{k}\|_{\fsL^{\infty}(\Omega_t)}\|\nabla u_A\|_{\fsL^{2}(\Omega_t)}\left(\|\Delta u_A\|_{\fsL^{2}(\Omega)} + \|\Delta u_B\|_{\fsL^{2}(\Omega_t)} + \|f_u (u_A + u_B)\|_{\fsL^{2}(\Omega_t)}\right) \\
     & \ioe C_\eta + \eta (\|\Delta u_A\|^2_{\fsL^{2}(\Omega_t)} + \|\Delta u_B\|^2_{\fsL^{2}(\Omega_t)}).
  \end{align*}

  The same estimates hold true for $\Ecal_B$, thus if we recall all our estimates and choose $\eta$ small enough, we get:
  \begin{align}\label{eq:part1inter}
     & \Ecal_A(t) + \Ecal_B(t) + \int_0^t\int_\Omega |\Delta u_A|^2 + \int_0^t\int_\Omega |\Delta u_B|^2 + \int_0^t\int_\Omega \frac{1}{\eps}|\nabla Q|^2 \\
     & \lesssim  C(\Ecal_A(0) + \Ecal_B(0)+ 1) + \delta (\|\nabla u_A \|_{\fsL^{\infty}([0, t], \fsL^{2}(\Omega))}^2 + \|\nabla u_B \|^2_{\fsL^{\infty}([0, t], \fsL^{2}(\Omega))}), \nonumber
  \end{align}
  which yields the desired conclusion.\medskip

  For the second part of the Proposition, we use the elementary inequality $a^2 \ioe 2(a-b)^2 + 2b^2$ for $a, b \soe 0$. Thus
  \begin{align*}
    \int_\Omega h_0|\nabla u_A(t)|^2
     & \ioe 2\int_\Omega \tilde{h}\left|\nabla u_A - \frac{(u_A+u_B)}{S}\nabla \tilde{k}\right|^2 + 2\int_\Omega \tilde{h}\left|\frac{(u_A+u_B)}{S}\nabla \tilde{k}\right|^2                    \\
     & \ioe 2 \Ecal_A(t) + \frac 2S\|\tilde{h}\|_{\fsL^\infty(\Omega_T)}\|\nabla \tilde{k}\|^2_{\fsL^\infty(\Omega_T)}\|u_A+u_B\|^2_{\fsL^\infty([0, T], \fsL^2(\Omega))} \\
     & \ioe C \left(1 + \delta\|\nabla u_A \|_{\fsL^{\infty}([0, t], \fsL^{2}(\Omega))}^2 + \delta\|\nabla u_B \|_{\fsL^{\infty}([0, t], \fsL^{2}(\Omega))}^2\right).
  \end{align*}
  A similar estimate holds for $\nabla u_B$. Thus, if we fix $\delta$ small enough and take the supremum over $0 < t < T$, we obtain
  \begin{equation*}
    \|\nabla u_A \|_{\fsL^{\infty}([0, T], \fsL^{2}(\Omega))}^2 + \|\nabla u_B \|_{\fsL^{\infty}([0, T], \fsL^{2}(\Omega))}^2 \ioe C.
  \end{equation*}

  Using this estimate in \eqref{eq:part1inter}, we conclude the proof of the lemma.

\end{proof}

\subsection{Part 2: Higher regularity estimates}
In this section, we prove the second part of \cref{thm:SKTeps-regu}, namely the estimates in the interior of the domain for $l > 0$. The proof relies on similar ideas as in the case $l=0$, namely the introduction of a functional $\Ecal$ that remains bounded along time. More precisely, we prove the

\begin{lemm}\label{thm:regu2}
  Let $T>0$, $d\ioe 3$, $\Omega$ be a smooth  bounded domain of $\R^d$, $d_1,d_2,\sigma >0$, $r_u,r_v\ge 0$, $d_{ij} > 0$ and $d_A, d_B, h, k$ satisfying \eqref{eq:hk-relation}. We assume that $h, k$ are affine on $[0, \sup_{\eps > 0} \|v^\eps\|_{\fsL^\infty(\Omega_T)}]$. Let $\tilde\Omega\Subset\Omega$, let $l> 0$ and $u_{A, \init}, u_{B, \init} \soe 0$ belong to $\Ccal^2(\overline\Omega)\cap\fsH^l(\Omega)$, $v_\init\soe 0$ belongs to $\fsW^{l+ 2, \infty}(\Omega)$ and compatible with the homogeneous Neumann boundary conditions. Let $u^\eps_A, u^\eps_B, v^\eps$ be the unique global strong solution (in $\Ccal^2(\R_+\times\overline\Omega)$) to \eqref{eq:SKT-eps} (with initial data $u_{A, \init}, u_{B, \init}, v_\init$).

  Then there exists a constant $C_l$ that depends only on the parameters of \eqref{eq:SKT-eps}, $d, T, l, \tilde\Omega$ and the norm of $u_{A, \init}, u_{B, \init}, v_\init$ in $\Ccal^2(\overline\Omega)\cap \fsH^{l}(\Omega)$, but not on $\eps$, such that
  \begin{align*}
    \|\nabla^{\otimes l}u_C^\eps\|_{\fsL^\infty([0, T], \fsL^2({\tilde\Omega}))} + \|\nabla^{\otimes (l+1)}u_C^\eps\|_{\fsL^2({\tilde\Omega\times[0, T]})} \ioe C_l,\\
    \|\nabla^{\otimes (l - 1)}v^\eps\|_{\fsL^{\infty}(\tilde\Omega\times[0, T])} + \|\nabla^{\otimes l}v^\eps\|_{\fsL^{10}(\tilde\Omega\times[0, T])} \ioe C_l,\\
    \|\partial_t\nabla^{\otimes (l-1)}v^\eps\|_{\fsL^{\frac{10}{3}}(\tilde\Omega\times[0, T])} + \|\nabla^{\otimes (l+1)}v^\eps\|_{\fsL^\frac{10}{3}(\tilde\Omega\times[0, T])}\ioe C_l,\\
  \end{align*}
  and
  \begin{equation*}
    \frac{1}{\eps}\int_0^t\int_{\tilde\Omega} |\nabla^{\otimes l} Q|^2 \ioe C_l.
  \end{equation*}
\end{lemm}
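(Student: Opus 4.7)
The plan is to proceed by induction on $l$, the base case $l=0$ being provided by \cref{thm:regu1} (applied on a slightly larger set $\tilde\Omega \Subset \Omega' \Subset \Omega$) combined with interior parabolic maximal regularity and Sobolev embeddings on $v^\eps$. For the induction step, I fix nested cutoffs $\chi \in \Ccal_c^\infty(\Omega')$ with $\chi \equiv 1$ on $\tilde\Omega$, so that any term arising from a derivative of $\chi$ is supported in $\Omega' \setminus \tilde\Omega$, where the induction hypothesis at the previous order (taken on $\Omega'$ instead of $\tilde\Omega$) provides control. The induction step alternates a new energy estimate on $u_A^\eps, u_B^\eps$ with a parabolic regularity upgrade for $v^\eps$: once the new $l$-th order bounds on $\nabla^{\otimes l} u_C^\eps$ are available, interior maximal regularity applied to the equation $\partial_t v^\eps - d_v \Delta v^\eps = f_v(u^\eps, v^\eps)\,v^\eps$ differentiated $l-1$ times yields the $\fsL^{10/3}$ bounds on $\partial_t \nabla^{\otimes(l-1)} v^\eps$ and $\nabla^{\otimes(l+1)} v^\eps$; the $\fsL^\infty$ bound on $\nabla^{\otimes(l-1)} v^\eps$ then follows from the parabolic Sobolev embedding (in $d \le 3$, $(d+2)/(10/3) = 3/2 < 2$), and the $\fsL^{10}$ bound on $\nabla^{\otimes l} v^\eps$ by an interpolation between the two.

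The core of the induction step is a higher-order analogue of the functional from \cref{sec:firstconvergence}. Define
\begin{equation*}
  \Ecal^\eps_{A, l}(t) := \int_\Omega \chi^2\, \tilde h^\eps\, \Bigl|\nabla^{\otimes l}\bigl(\nabla u_A^\eps - \tfrac{u^\eps}{S}\nabla \tilde k^\eps\bigr)\Bigr|^2,
\end{equation*}
and $\Ecal^\eps_{B, l}$ symmetrically. Differentiating in $t$ and using the equations for $u_A^\eps, u_B^\eps$, two crucial contributions are expected to survive: (i) after integration by parts of the top-order diffusive term coming from $d_A \nabla \Delta u_A^\eps$, a dissipation $d_A \int \chi^2 \tilde h^\eps |\nabla^{\otimes(l+1)} u_A^\eps|^2$ plus its $B$-counterpart; (ii) after summing the $A$ and $B$ functionals and invoking the identity $\nabla^{\otimes j} \tilde h^\eps + \nabla^{\otimes j} \tilde k^\eps = 0$ for $j \ge 1$ --- which is exactly where the affineness of $h, k$ is used --- the cross term in $\frac{1}{\eps}$ integrates by parts to $-\frac{1}{\eps}\int \chi^2 |\nabla^{\otimes(l+1)} Q^\eps|^2$. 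This is the direct generalization of the key cancellation at the end of the computation of $\frac{d}{dt}(\Ecal_A + \Ecal_B)$ in the proof of \cref{thm:regu1}.

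All other terms are of lower order in $\eps$ and consist, via Leibniz's rule, of products of derivatives of $u_A^\eps, u_B^\eps, \tilde h^\eps, \tilde k^\eps$ of total order $\le l+1$, with at most two factors carrying the top order. For each such term I will combine the induction hypothesis on $u_C^\eps, v^\eps$, the new $v^\eps$-estimates above, Sobolev embeddings in $d \le 3$ (notably $\fsH^1 \hookrightarrow \fsL^6$ and the parabolic embeddings just invoked), and Young's inequality to either bound the term by a constant or absorb the top-order factors into the two dissipations, mirroring the arguments after \eqref{eq:initial-bound2plus}. Commutators with $\chi$ and with $\Delta$ produce terms supported in $\Omega' \setminus \tilde\Omega$ and carrying strictly fewer than $l+1$ derivatives on $u_C^\eps$, hence controlled by the induction. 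Grönwall's lemma then yields $\Ecal^\eps_{A, l}(t) + \Ecal^\eps_{B, l}(t) \lesssim 1$ uniformly in $\eps, t$, from which the $\fsL^2_{t,x}$ bound on $\nabla^{\otimes(l+1)} u_C^\eps$ follows by the retained dissipation, and the $\fsL^\infty_t \fsL^2_x$ bound on $\nabla^{\otimes l} u_C^\eps$ by writing $\nabla^{\otimes(l+1)} u_A^\eps = \nabla^{\otimes l}\bigl(\nabla u_A^\eps - \tfrac{u^\eps}{S}\nabla \tilde k^\eps\bigr) + \nabla^{\otimes l}\bigl(\tfrac{u^\eps}{S}\nabla \tilde k^\eps\bigr)$ and controlling the second summand by the lower-order induction. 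I expect the main obstacle to lie in this combinatorial bookkeeping: the integrabilities $\fsL^{10}$ and $\fsL^{10/3}$ on $v^\eps$ are tailored precisely to make the worst term-products integrable, and verifying that each borderline product fits requires careful accounting. The affineness of $h, k$ is what keeps derivatives of $\tilde h^\eps, \tilde k^\eps$ tractable --- they reduce to scalar multiples of $\nabla^{\otimes j} v^\eps$ --- bypassing any appeal to the Faà di Bruno formula.
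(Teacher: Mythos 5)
Your overall strategy is the paper's (induction on $l$, alternating interior parabolic regularity for $v^\eps$ with a localized higher-order energy for $u^\eps_A, u^\eps_B$ whose $\frac1\eps$ term collapses to $-\frac1\eps\int\chi^2|\nabla^{\otimes}Q^\eps|^2$), but the functional you propose breaks the one cancellation the whole argument hinges on. Write $Y_A := \nabla^{\otimes l}(\nabla u^\eps_A - \frac{u^\eps}{S}\nabla\tilde k^\eps)$ and $Y_B$ symmetrically; the $\frac1\eps$ contribution to $\frac{\dd}{\dd t}(\Ecal_{A,l}+\Ecal_{B,l})$ is $\frac2\eps\int\chi^2(\tilde h^\eps Y_A-\tilde k^\eps Y_B)\cdot\nabla^{\otimes(l+1)}Q^\eps$, and it equals $-\frac2\eps\int\chi^2|\nabla^{\otimes(l+1)}Q^\eps|^2$ only if $\tilde k^\eps Y_B-\tilde h^\eps Y_A=\nabla^{\otimes(l+1)}Q^\eps$ exactly. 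This holds for $l=0$ but fails for $l\soe1$: expanding $\nabla^{\otimes l}$ of the order-one identity by Leibniz gives $\nabla^{\otimes(l+1)}Q^\eps=\tilde k^\eps Y_B-\tilde h^\eps Y_A+\sum_{\emptyset\neq S}\partial^S\tilde k^\eps\,\nabla\partial^{S^c}u^\eps$, so a nonzero lower-order remainder $D$ survives and Young's inequality leaves an uncontrolled $\frac1\eps\int\chi^2|D|^2$. The correct functional (the paper's) is built from $X_A:=\partial^\alpha u_A-\frac1S\sum_{\gamma+\beta=\alpha,\,|\gamma|>0}\binom{\a}{\beta}\partial^\gamma\tilde k\,\partial^\beta u$, i.e.\ the \emph{full} Leibniz expansion of $\partial^\alpha(u\tilde k/S)$ minus its top term, which is precisely what makes $\partial^\alpha Q=\tilde kX_B-\tilde hX_A$ an exact identity. (There is also an index shift: your energy is an order-$(l+1)$ quantity, so its dissipation is $|\nabla^{\otimes(l+2)}u_A^\eps|^2$, not $|\nabla^{\otimes(l+1)}u_A^\eps|^2$ as you state.)

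Two further gaps. First, the ordering within each induction step is backwards: the level-$l$ bounds on $v^\eps$ (the $\fsL^\infty$, $\fsL^{10}$ and $\fsL^{\frac{10}{3}}$ estimates) must be derived \emph{before} the $u$-energy estimate at level $l$, from the level-$(l-1)$ hypothesis alone, because the energy inequality contains terms such as $\partial^\gamma\partial_t\tilde k^\eps\,\partial^\beta u^\eps$ and $u^\eps\partial^\alpha\tilde k^\eps$ with $|\gamma|=|\alpha|=l$ that require exactly those level-$l$ bounds on $v^\eps$; your plan of upgrading $v^\eps$ only "once the new $l$-th order bounds on $u$ are available" leaves these terms unestimated. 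Second, recovering $\|\nabla^{\otimes l}u^\eps_C\|_{\fsL^\infty([0,T],\fsL^2)}$ from the energy is not a matter of "controlling the second summand by the lower-order induction": the top correction term involves $u^\eps\,\nabla^{\otimes l}\tilde k^\eps$ in $\fsL^\infty_t\fsL^2_x$, which no parabolic estimate supplies directly. The paper closes this loop with a \emph{conditional} bound of the form $\|\chi\partial^\alpha v^\eps\|_{\fsL^\infty([0,T],\fsL^{2+})}\ioe\delta'\|\chi\partial^\alpha u^\eps\|_{\fsL^\infty([0,T],\fsL^2)}+C_{\delta'}(1+\|\nabla(\chi\partial^\alpha u^\eps)\|_{\fsL^2})$, obtained from maximal regularity for $v^\eps$ with the top-order forcing estimated in terms of $u^\eps$, followed by an absorption argument with $\delta'$ small; without this the induction does not close.
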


\begin{proof}
  We will prove by induction on $l \soe 1$ that for any $\Omega_1 \Subset \Omega$, there exists some constant $C > 0$ depending on $\Omega_1, l, T, d$ and the parameters of \eqref{eq:SKT-eps} but independent of $\eps$, such that

  \begin{equation}\label{eq:HRu1}
    \|\nabla^{\otimes l}u_C^\eps\|_{\fsL^\infty([0, T], \fsL^2({\Omega_1}))} + \|\nabla^{\otimes (l+1)}u_C^\eps\|_{\fsL^2({\Omega_1\times[0, T]})} \ioe C,
  \end{equation}
  and
  \begin{align}\label{eq:HRv1}
    \|\nabla^{\otimes (l - 1)}v^\eps\|_{\fsL^{\infty}(\Omega_1\times[0, T])} + \|\nabla^{\otimes l}v^\eps\|_{\fsL^{10}(\Omega_1\times[0, T])}\ioe C,\\
     \|\partial_t\nabla^{\otimes (l-1)}v^\eps\|_{\fsL^{\frac{10}{3}}(\Omega_1\times[0, T])} + \|\nabla^{\otimes (l+1)}v^\eps\|_{\fsL^\frac{10}{3}(\Omega_1\times[0, T])}\ioe C.\nonumber
  \end{align}

  The case $l = 1$ has already been proven (see \cref{thm:DGNM-bound} for \eqref{eq:HRv1} and \cref{thm:regu1} for \eqref{eq:HRu1}).
  
  Let $l\soe 2$, we assume that the induction hypothesis holds for $l - 1$, hence for any $\Omega_1 \Subset \Omega$
    \begin{equation}\label{eq:HRu}
    \|\nabla^{\otimes (l-1)}u_C^\eps\|_{\fsL^\infty([0, T], \fsL^2({\Omega_1}))} + \|\nabla^{\otimes l}u_C^\eps\|_{\fsL^2({\Omega_1\times[0, T]})} \ioe C,
  \end{equation}
  and
  \begin{align}\label{eq:HRv}
    \|\nabla^{\otimes (l - 2)}v^\eps\|_{\fsL^{\infty}(\Omega_1\times[0, T])} + \|\nabla^{\otimes (l-1)}v^\eps\|_{\fsL^{10}(\Omega_1\times[0, T])}\ioe C,\\
    \|\partial_t\nabla^{\otimes (l-2)}v^\eps\|_{\fsL^{\frac{10}{3}}(\Omega_1\times[0, T])} + \|\nabla^{\otimes l}v^\eps\|_{\fsL^\frac{10}{3}(\Omega_1\times[0, T])}\ioe C.\nonumber
  \end{align}

  In the rest of the proof, $\alpha, \beta, \gamma \in \N^d$ will always denote multi-indices. We denote $|\alpha| := \sum_{i = 1}^d \a_i$ and we write $\a \ioe \beta$ if for $1\ioe i\ioe d$, one has $\a_i \ioe \beta_i$. We denote $<$ the associated strict order (thus $\a < \beta$ if $\alpha \ioe \beta$ and $\a \neq \beta$). Finally, for $\a\ioe\beta$, we also denote $\binom{\a}{\beta} := \frac{|\a|!}{\prod_{i =1}^d |\beta_i|!}$ the binomial coefficient.\\
  Let $\Omega_2\Subset\Omega$ and let $k<l$, using \eqref{eq:HRu} as well as a Sobolev embedding we obtain $\nabla^{\otimes k} u_C^\eps \in\fsL^\infty([0, T], \fsL^2(\Omega_2))\cap\fsL^2([0, T], \fsL^6(\Omega_2))$ in dimension $d\ioe 3$. Thus, by an interpolation argument, we deduce that for any $\Omega_2\Subset\Omega$
  \begin{equation}\label{eq:HR3}
    \|\nabla^{\otimes k}u_C^\eps\|_{\fsL^{\frac{10}{3}}(\Omega_2\times [0, T])}\ioe C.
  \end{equation}\medskip

  \textit{Regularity of $v^\eps$.} For readability, when it doesn't create any ambiguity, we will drop the super-script $\eps$ in the rest of the proof. Let any $\Omega_1 \Subset \Omega_2 \Subset \Omega_3\Subset\Omega$ and let $\alpha$ be such that $|\alpha| := l - 1$, and $v_\alpha := \partial^\alpha v$. We recall the notation $u^\eps := u_A^\eps + u_B^\eps$. We apply $\partial^\alpha$ to \eqref{eq:SKT-eps} to obtain
  \begin{align*}
    \partial_t v_\alpha - d_v\Delta v_\alpha - r_v v_\alpha
     & = - d_{21}\partial^\alpha(uv) - d_{22}\partial^\alpha(v^2)\\
     & = (- d_{21}u - 2d_{22}v) v_\alpha - d_{21}\partial^\a u v\\
     & -  d_{21}\sum_{\beta + \gamma = \alpha \text{ and }\beta, \gamma > 0}\binom{\a}{\beta}\partial^\gamma u \partial^\beta v - d_{22}\sum_{\beta + \gamma = \alpha \text{ and }\beta, \gamma > 0}\binom{\a}{\beta}\partial^\gamma v \partial^\beta v \\
     & =: (- d_{21}u- 2d_{22}v) v_\alpha - d_{21}\partial^\a u v + R_1 + R_2.
  \end{align*}

  By \eqref{eq:initial-estimates-uLq} and \eqref{eq:HRv} (applied to $\Omega_3$), $R_2$ and $(- d_{21}u- d_{22}v) \in \fsL^{\infty-}(\Omega_3\times[0, T]) $. Using that in the sum defining $R_1$, we have $|\beta| \ioe |\alpha| - 1 = l - 2$ as well as \eqref{eq:HRv} and \eqref{eq:HR3}, we obtain $\|R_1\|_{\fsL^\frac{10}{3}(\Omega_3\times [0, T])}\ioe C$. Similarly, we also have $v\partial^\a u \in\fsL^\frac{10}{3}(\Omega_3\times [0, T]) $.
  Thus, noticing that $\nabla^{\otimes (l- 1)} v_\init\in\fsL^\infty(\Omega)$, we obtain by (interior) parabolic regularity (see for instance \cite[Chapter III, Theorem 8.1]{ladyzenskajasolonnikov1968}), in dimension $d\ioe 3$ for any $\Omega_2\Subset\Omega$,
  \begin{equation}\label{eq:boundvlm1}
    \|\nabla^{\otimes (l- 1)} v\|_{\fsL^{\infty}(\Omega_2\times [0, T])} \ioe C.
  \end{equation}

  Let again $\Omega_1\Subset\Omega_2\Subset \Omega_3\Subset\Omega$. We consider $\chi$ a smooth cutoff function between $\Omega_2$ and $\Omega_3$ (hence $\chi \equiv 1$ on $\Omega_2$ and $\chi \equiv 0$ on $\Omega_3^c$).
  
  By the previous estimates (applied on $\Omega_3$), we observe that $r_v v_\alpha + (- d_{21}u - 2d_{22}v) v_\alpha$ is in $\fsL^{\infty-}(\Omega_3\times [0, T])$. Thus, we have
  \begin{equation*}
    \partial_t (\chi v_\a) - d_v\Delta (\chi v_\a) = R - d_v\Delta\chi v_\a - 2d_v\nabla\chi\cdot\nabla v_\a,
  \end{equation*}
  for some $R\in\fsL^\frac{10}{3}(\Omega_3\times[0, T])$. By the induction hypothesis, we also have $d_v\Delta\chi v_\a - 2d_v \nabla\chi\cdot\nabla v_\a \in\fsL^\frac{10}{3}(\Omega_3\times[0, T])$.
  
  By maximal regularity (noticing that $\partial^\a v_\init\in\fsW^{2, \infty}$), we obtain that for all $\Omega_2\Subset\Omega$, we have
  \begin{equation}\label{eq:boundvlp1}
    \|\partial_t \nabla^{\otimes (l - 1)} v\|_{\fsL^\frac{10}{3}(\Omega_2\times [0, T])} + \|\nabla^{\otimes (l+1)} v\|_{\fsL^\frac{10}{3}(\Omega_2\times [0, T])} \ioe C.
  \end{equation}

  Let again $\Omega_1\Subset\Omega_2\Subset \Omega_3\Subset\Omega$ and $\chi$ a smooth cutoff function between $\Omega_2$ and $\Omega_3$.

  We now take $|\alpha| = l$, similar computations as previously (using \eqref{eq:HRu}) show that
  \begin{align*}
    \partial_t (\chi v_\alpha) - d_v\Delta (\chi v_\alpha)
     & = r_v\chi v_\alpha- d_{21}\chi\partial^\alpha(uv) - d_{22}\chi \partial^\alpha(v^2) - d_v\Delta\chi v_\a - 2d_v\nabla\chi\cdot\nabla v_\a\\
     & = r_v \chi v_\alpha -  d_{21}\chi \sum_{\beta + \gamma = \alpha}\binom{\a}{\beta}\partial^\gamma u \partial^\beta v - d_{22}\chi \sum_{\beta + \gamma = \alpha}\binom{\a}{\beta}\partial^\gamma v \partial^\beta v\\
     &- d_v\Delta\chi v_\a - 2d_v\nabla\chi\cdot\nabla v_\a.
  \end{align*}
  We first consider $\partial^\gamma u \partial^\beta v$ for $|\beta|\ioe l - 1$, then by \eqref{eq:boundvlm1} and \eqref{eq:HRu} (applied on $\Omega_3$), we obtain $\partial^\gamma u \partial^\beta v \in\fsL^2(\Omega_3\times[0, T])$. When we have $\beta = \a$, we can use \eqref{eq:initial-estimates-uLq} and \eqref{eq:HRv} (applied on $\Omega_3$) to deduce that $u\partial^\a v\in\fsL^{\frac{10}{3}-}(\Omega_3\times[0, T])\subset \fsL^{2}(\Omega_3\times[0, T])$. Similarly, one has $\partial^\gamma v \partial^\beta v\in\fsL^{2}(\Omega_3\times[0, T])$. Finally, we can use \eqref{eq:boundvlp1} to show that $\nabla v_\a \in \fsL^{2}(\Omega_2\times[0, T])$. Hence, we have shown
  \begin{equation*}
    \partial_t (\chi v_\a) - d_v\Delta (\chi v_\a) \in\fsL^2(\Omega_3\times[0, T]).
  \end{equation*}

  Hence, by \cite[Chapter III, Theorem 9.1]{ladyzenskajasolonnikov1968}, we have for all $\Omega_2\Subset\Omega$ in dimension $d\ioe 3$ (we notice that $\nabla^{\otimes l} v_\init\in\fsL^\infty(\Omega)$)
  \begin{equation}\label{eq:boundvl}
    \|\nabla^{\otimes l} v\|_{\fsL^{10}(\Omega_2\times [0, T])}\ioe C.
  \end{equation}

  The estimates \eqref{eq:boundvlm1}, \eqref{eq:boundvlp1}, \eqref{eq:boundvl} are also valid if we replace $v$ by $h(v)$ (or $k(v)$), since we assumed that $h, k$ are affine on $[0, \sup_{\eps > 0} \|v^\eps\|_{\fsL^\infty(\Omega_T)}]$. We notice that we have proven the recursion hypothesis \eqref{eq:HRv1} concerning $v$.\medskip
  
  Let again $\Omega_1\Subset\Omega_2\Subset\Omega$ and $\chi$ a smooth cutoff function between $\Omega_1$ and $\Omega_2$ (hence $\chi \equiv 1$ on $\Omega_1$ and $\chi \equiv 0$ on $\Omega_2^c$). We will now give one conditional regularity estimate. We consider some $|\a| = l$, using \eqref{eq:boundvlm1} and \eqref{eq:HR3}, we see that $R_1 + R_2\in\fsL^{\frac{10}{3}}(\Omega_2\times[0, T])$. Then, similarly as in the proof of \eqref{eq:initial-bound2plus}, using an interpolation argument and a Sobolev inequality in dimension $d\ioe3$, we have that for some $0 <\theta <1 $ and any parameter $\delta > 0$:
  \begin{align}\label{eq:32}
    \|\chi\partial^\alpha u\|_{\fsL^{2+}(\Omega_2\times[0, t])}
    &\ioe \|\chi\partial^\alpha u\|_{\fsL^{\infty}([0, t], \fsL^2(\Omega_2))}^\theta\|\chi\partial^\alpha u\|_{\fsL^{2}([0, t], \fsL^6(\Omega_2))}^{1- \theta}\nonumber\\
    &\ioe \delta \|\chi\partial^\alpha u\|_{\fsL^{\infty}([0, t], \fsL^2(\Omega_2))} + C_\delta \|\nabla(\chi\partial^\alpha u)\|_{\fsL^2(\Omega_2\times[0, t])}.
  \end{align}

  Thus, if we consider the equation followed by $\chi v_\a$, we get
  \begin{equation*}
    \partial_t (\chi v_\a) - d_v \Delta(\chi v_a) = \chi \tilde R - d_v v_\a\Delta \chi - 2d_v \nabla v_\a \cdot \nabla \chi,
  \end{equation*}
  where $\tilde R := (- d_{21}u- 2d_{22}v) v_\alpha - d_{21}\partial^\a u v + R_1 + R_2$. We also have $- d_v v_\a\Delta \chi - 2d_v \nabla v_\a \cdot \nabla \chi \in\fsL^{2+}(\Omega_2\times[0, T])$ by \eqref{eq:boundvlp1}. By \eqref{eq:initial-estimates-uLq} and \eqref{eq:boundvl}, we also obtain $\chi (- d_{21}u- 2d_{22}v) v_\alpha\in\fsL^{2+}(\Omega_2\times[0, T])$.

  Thus, by maximal regularity (noticing that $\partial^\a v_\init\in\fsW^{2, \infty}$) and  \eqref{eq:32}, we obtain
  \begin{align*}
    \|\partial_t(\chi\partial^\alpha v)\|_{\fsL^{2+}(\Omega_2\times[0, t])}
     & \lesssim \|\chi \tilde R\|_{\fsL^{2+}(\Omega_2\times[0, t])} + C\\
     & \ioe \delta \|\chi\partial^\alpha u\|_{\fsL^{\infty}([0, t], \fsL^2(\Omega_2))} + C_\delta \|\nabla(\chi\partial^\alpha u)\|_{\fsL^{2}([0, t], \fsL^2(\Omega_2))} + C.
  \end{align*}

  We conclude (by time Sobolev injection) that
  \begin{align}\label{eq:boundvlinf2+}
    &\|\chi\partial^\alpha v\|_{\fsL^{\infty}([0, t], \fsL^{2+}(\Omega_2))}\\
    &\ioe\delta \|\chi\partial^\alpha u\|_{\fsL^{\infty}([0, t], \fsL^2(\Omega_2))} + C_\delta \|\nabla(\chi\partial^\alpha u)\|_{\fsL^{2}([0, t], \fsL^2(\Omega_2))} + C.\nonumber
  \end{align}\\

  \textit{Regularity of $u^\eps$.} In the following, when we claim that we \guillemotleft have a similar statement for $u_B$\guillemotright, it means that if we replace $u_A$ by $u_B$ in the statement and switch the role of $\tilde{k}$ and $\tilde{h}$, then the obtained statement is also true.\\

  Let $\Omega_1\Subset\Omega_2\Subset\Omega$ and $\chi$ a smooth cutoff function between $\Omega_1$ and $\Omega_2$ (meaning that $\chi \equiv 1$ on $\Omega_1$ and $\chi \equiv 0$ on $\Omega_2^c$).
Let $\alpha \in \N^d$ with $|\a| = l$, we consider the energy:
  \begin{equation*}
    \Ecal_{A, \eps, \chi, \alpha}(t) := \int_\Omega \chi^2 \tilde{h}^\eps\left|\partial^\alpha u^\eps_A - \frac{1}{S}\sum_{\gamma + \beta = \alpha \text{ and } |\gamma| > 0} \binom{\a}{\beta}\partial^\gamma \tilde{k}^\eps \partial^\beta u^\eps\right|^2.
  \end{equation*}

  For readability in the rest of the proof, we drop the indices $\eps$ and $\chi$. We denote $X_A := (\partial^\alpha u_A - \frac{1}{S}\sum_{\gamma + \beta = \alpha\text{ and } |\gamma| > 0} \binom{\a}{\beta}\partial^\gamma \tilde{k} \partial^\beta u)$. We similarly define $\Ecal_{B, \eps, \chi, \alpha}(t)$ and $X_B:= (\partial^\alpha u_B - \frac{1}{S}\sum_{\gamma + \beta = \alpha\text{ and } |\gamma| > 0} \binom{\a}{\beta}\partial^\gamma \tilde{h} \partial^\beta u)$.

  Then
  \begin{align*}
    \frac{\dd}{\dd t} \Ecal_{A, \alpha}(t)
     & = \int_\Omega \chi^2\partial_t\tilde{h} X_A^2 + 2\int_\Omega \chi^2 \tilde{h} X_A \partial_tX_A\\
     & = \int_\Omega \chi^2\partial_t\tilde{h} X_A^2 + 2\int_\Omega \chi^2 \tilde{h} X_A \left[\partial^\alpha (d_A\Delta u_A + f_u u_A + \frac{1}{\eps}Q)\right]\\
     & - \sum_{\gamma + \beta = \alpha\text{ and } |\gamma| > 0}  \frac{2}{S}\binom{\a}{\beta}\int_\Omega \chi^2 \tilde{h} X_A \bigg[\partial^\gamma \partial_t\tilde{k} \partial^\beta u\\
     &+\partial^\gamma \tilde{k} \partial^\beta (d_A\Delta u_A + (d_A + d_B)\Delta u_B + f_u(u_A + u_B))\bigg].
  \end{align*}

  We first consider the term in which $\frac{1}{\eps}$ appears. We recall that $\tilde{k} + \tilde{h} = S$, thus for $\gamma > 0$, we have $\partial^\gamma \tilde{k} + \partial^\gamma \tilde{h} = 0$.
  Hence
  \begin{align*}
    \partial^\alpha Q
     & = \sum_{\gamma + \beta = \alpha} \left[\binom{\a}{\beta}\partial^\gamma \tilde{k} \partial^\beta u_B - \binom{\a}{\beta}\partial^\gamma \tilde{h} \partial^\beta u_A \right]\\
     & = \tilde{k}\partial^\alpha u_B - \tilde{h}\partial^\alpha u_A + \sum_{\gamma + \beta = \alpha \text{ and }\gamma > 0} \left[\binom{\a}{\beta}\partial^\gamma \tilde{k} \partial^\beta u_A + \binom{\a}{\beta}\partial^\gamma \tilde{k} \partial^\beta u_B \right]\\
     & = \tilde{k}\partial^\alpha u_B - \tilde{h}\partial^\alpha u_A + \frac{\tilde h + \tilde k }{S}\sum_{\gamma + \beta = \alpha \text{ and } \gamma > 0} \binom{\a}{\beta}\partial^\gamma \tilde{k} \partial^\beta u                    \\
     & = \tilde{k}\partial^\alpha u_B - \frac{\tilde k }{S}\sum_{\gamma + \beta = \alpha\text{ and } \gamma > 0} \binom{\a}{\beta}\partial^\gamma \tilde{h} \partial^\beta u
     - \tilde{h}\partial^\alpha u_A + \frac{\tilde h}{S}\sum_{\gamma + \beta = \alpha \text{ and } \gamma > 0}\binom{\a}{\beta} \partial^\gamma \tilde{k} \partial^\beta u\\
     & = \tilde{k}X_B - \tilde{h}X_A.
  \end{align*}

  Hence, we obtain
  \begin{align*}
    \frac{1}{\eps}\int_\Omega \chi^2 \tilde{h} X_A \partial^\alpha Q - \frac{1}{\eps}\int_\Omega \chi^2 \tilde{k} X_B \partial^\alpha Q
     & = - \frac{1}{\eps} \int_\Omega \chi^2 |\partial^\alpha Q|^2.
  \end{align*}

  For the next term, we perform an integration by part, that holds thanks to the cutoff function $\chi$ (which vanishes outside $\Omega_2$)
  \begin{align*}
    \int_\Omega \chi^2 \tilde{h} \partial^\alpha u_A \partial^\alpha \Delta u_A
     & =  - \int_\Omega \chi\tilde{h} \nabla (\chi \partial^\alpha u_A)\cdot \nabla \partial^\a u_A - \int_\Omega \nabla(\chi \tilde{h})\cdot \nabla \partial^\alpha u_A(\chi\partial^\alpha u_A)                                                            \\
     & = - \int_\Omega \tilde{h} |\nabla (\chi \partial^\alpha u_A)|^2 + \int_\Omega \tilde{h}\nabla \chi\cdot\nabla(\chi\partial^\a u_A) \partial^\alpha u_A                                                                                                \\
     & - \int_\Omega \nabla (\chi\tilde{h})\cdot \nabla(\chi \partial^\alpha u_A)\partial^\alpha u_A + \int_\Omega \nabla (\chi\tilde{h})\cdot \nabla\chi|\partial^\alpha u_A|^2.
  \end{align*}

  We now sum $\frac{\dd }{\dd t}\Ecal_{A, \alpha}$ with the similar expression for $\Ecal_{B, \a}$ and obtain
  \begin{align*}
    \frac{\dd }{\dd t}\Ecal_{A, \alpha}(t) + \frac{\dd }{\dd t}\Ecal_{B, \alpha}
     & = -\int_\Omega \left( \tilde{h} |\nabla (\chi \partial^\alpha u_A)|^2 + \tilde{k} |\nabla (\chi \partial^\alpha u_B)|^2\right)- \frac{2}{\eps} \int_\Omega \chi^2 |\partial^\alpha Q|^2 \\
     & + I_A + I_B,
  \end{align*}

  where
  \begin{align*}
    I_A
     & :=d_A\int_\Omega \tilde{h}\nabla \chi\cdot\nabla(\chi\partial^\a u_A) \partial^\alpha u_A\\
     & - d_A\int_\Omega \nabla (\chi\tilde{h})\cdot \nabla(\chi \partial^\alpha u_A)\partial^\alpha u_A + d_A\int_\Omega \nabla (\chi\tilde{h})\cdot \nabla\chi|\partial^\alpha u_A|^2 \\
     & + \int_\Omega \chi^2\partial_t\tilde{h} X_A^2 + 2\int_\Omega \chi^2 \tilde{h} X_A \partial^\alpha (f_u u_A)\\
     & +2\int_\Omega \chi^2 \tilde{h} \left[- \frac{1}{S}\sum_{\gamma + \beta = \alpha \text{ and } |\gamma| > 0} \binom{\a}{\beta}\partial^\gamma \tilde{k} \partial^\beta u\right] \partial^\alpha (d_A \Delta u_A) \\
     & - \sum_{\gamma + \beta = \alpha \text{ and } |\gamma| > 0}  \frac{2}{S}\binom{\a}{\beta}\int_\Omega \chi^2 \tilde{h} X_A \left[\partial^\gamma \partial_t \tilde{k} \partial^\beta u +  \partial^\gamma \tilde{k} \partial^\beta (d_A\Delta u_A + (d_A + d_B)\Delta u_B + f_u(u_A + u_B))\right] \\
     & =: I_1 + I_2 + I_3 + I_4 + I_5 + I_6 + I_7.
  \end{align*}
  We can similarly define $I_B$.\medskip

  We will now bound $\int_0^tI_A$. In the remaining of the proof, $\eta, \delta > 0$ are small parameters to be chosen later. We start with some general bounds.\medskip

  We start by interpolating $\fsL^{\frac{10}{3}-}(\Omega\times[0, T])$ between $\fsL^{\infty-}([0, T], \fsL^2(\Omega))$ and $\fsL^{2}([0, T], \fsL^6(\Omega))$, to deduce that for some $0 <\theta_1 < 1$, we have
  \begin{align*}
    \|\chi\partial^\a u\|_{\fsL^{\frac{10}{3}-}(\Omega_T)}
    &\ioe \|\chi\partial^\a u\|_{\fsL^{2}([0, T], \fsL^6(\Omega))}^{(1 - \theta_1)}\|\chi\partial^\a u\|_{\fsL^{\infty-}([0, T], \fsL^2(\Omega))}^{\theta_1}\\
    &\ioe  C\|\nabla(\chi\partial^\a u)\|_{\fsL^{2}([0, T], \fsL^2(\Omega))}^{(1 - \theta_1)}\|\chi\partial^\a u\|_{\fsL^{\infty-}([0, T], \fsL^2(\Omega))}^{\theta_1}.
  \end{align*}

  Then, we interpolate  $\fsL^{\infty-}([0, T], \fsL^2(\Omega))$ between $\fsL^{\infty}([0, T], \fsL^2(\Omega))$ and $\fsL^{2}([0, T], \fsL^2(\Omega))$, to get that for some $0 <\theta_2 < 1$, we have
  \begin{align*}
    \|\chi\partial^\a u\|_{\fsL^{\frac{10}{3}-}(\Omega_T)}
    &\ioe  C\|\nabla(\chi\partial^\a u)\|_{\fsL^{2}([0, T], \fsL^2(\Omega))}^{(1 - \theta_1)}\|\chi\partial^\a u\|_{\fsL^{\infty}([0, T], \fsL^2(\Omega))}^{\theta_1\theta_2}\|\chi\partial^\a u\|_{\fsL^{2}([0, T], \fsL^2(\Omega))}^{\theta_1(1 - \theta_2)}.
  \end{align*}

  Using the induction hypothesis (applied on $\Omega_2$) and Young's inequality, we obtain that for some constant $C_{\eta, \delta}$
  \begin{equation}\label{eq:bound2plus}
    \|\chi\partial^\alpha u\|^{2}_{\fsL^{\frac{10}{3}-}(\Omega_T)}
    \ioe \eta\|\nabla(\chi\partial^\alpha u)\|_{\fsL^{2}(\Omega_T)}^2 + \delta \|\chi\partial^\alpha u \|_{\fsL^{\infty}([0, T], \fsL^{2}(\Omega))}^2 + C_{\eta, \delta}.
  \end{equation}
  The same inequality holds for $u_C$.\medskip

  As a consequence, we will be able to bound $\chi X_A$ in $\fsL^{\frac{5}{2}}(\Omega_T)$ (better bounds hold, but the bound in $\fsL^{\frac{5}{2}}(\Omega_T)$ suffices for our purpose). Using \eqref{eq:HR3} and \eqref{eq:boundvl} to bound $\partial^\gamma\tilde k\partial^\beta u$ in $\fsL^{\frac{5}{2}}(\Omega_T)$ for $\gamma > 0$ and noticing that $\frac52<\frac{10}{3}$, we obtain thanks to \eqref{eq:bound2plus} that for any $\eta, \delta > 0$ we have
  \begin{equation}\label{eq:boundXA2+}
    \|\chi X_A\|^2_{\fsL^{{\frac{5}{2}}}(\Omega_2\times[0, T])} \ioe \eta\|\nabla(\chi\partial^\alpha u_A)\|_{\fsL^{2}(\Omega_T)}^2 + \delta \|\chi\partial^\alpha u_A \|_{\fsL^{\infty}([0, T], \fsL^{2}(\Omega))}^2 + C_{\eta, \delta}.
  \end{equation}

  Similarly, by the induction hypothesis (applied on $\Omega_2$) we also immediately have
  \begin{equation}\label{eq:boundXA2}
    \|X_A\|_{\fsL^{2}(\Omega_2\times[0, T])} \ioe C.
  \end{equation}

  We now go back to bounding $I_A$. The first three terms can be bounded with similar arguments. Using the induction hypothesis \eqref{eq:HRu} and Young's inequality, one obtains
  \begin{align*}
    \int_0^t I_1
     & = d_A\int_0^t\int_\Omega \tilde{h}\nabla \chi\cdot\nabla(\chi\partial^\a u_A) \partial^\alpha u_A \\
     & \ioe d_A\|\tilde{h}\|_{\fsL^{\infty}(\Omega_T)}\|\nabla\chi\|_{\fsL^{\infty}(\Omega_T)}\|\partial^\a u_A\|_{\fsL^{2}(\Omega_2\times[0, T])}\|\nabla(\chi\partial^\a u_A)\|_{\fsL^{2}(\Omega_2\times[0, T])} \\
     & \ioe \eta\|\nabla(\chi\partial^\a u_A)\|_{\fsL^{2}(\Omega_2\times[0, T])}^2 + C_\eta.
  \end{align*}

  The second term can be bounded similarly, using this time that $\tilde h, \chi \in \fsW^{1, \infty}(\Omega_T)$ by \eqref{eq:initial-estimates-vinf}
  \begin{align*}
    \int_0^t I_2
     & = - d_A \int_\Omega \nabla (\chi\tilde{h})\cdot \nabla(\chi \partial^\alpha u_A)\partial^\alpha u_A \\
     & \ioe \eta\|\nabla(\chi\partial^\a u_A)\|_{\fsL^{2}(\Omega_2\times[0, T])}^2 + C_\eta.
  \end{align*}

  In the same way, we obtain a bound for the next term:
  \begin{align*}
    \int_0^tI_3
     & = d_A\int_0^t\int_\Omega \nabla (\chi\tilde{h})\cdot \nabla\chi|\partial^\alpha u_A|^2 \\
     & \ioe C \|\partial^\a u_A\|_{\fsL^{2}(\Omega_2\times[0, T])}^2                       \\
     & \ioe C.
  \end{align*}

  We can now tackle the fourth term, using \eqref{eq:initial-estimates-v} and \eqref{eq:boundXA2+}
  \begin{align*}
    \int_0^t I_4
     & =\int_\Omega \chi^2\partial_t\tilde{h} X_A^2                                                                                                                          \\
     & \ioe \|\partial_t\tilde{h}\|_{\fsL^{\infty-}(\Omega_T)}\|\chi^2 X_A^2\|_{\fsL^{1+}(\Omega_2\times[0, T])}                                                             \\
     & \ioe \eta\|\nabla(\chi\partial^\alpha u_A)\|_{\fsL^{2}(\Omega_t)}^2 + \delta \|\chi\partial^\alpha u_A \|_{\fsL^{\infty}([0, T], \fsL^{2}(\Omega))}^2 + C_{\eta, \delta}.
  \end{align*}

  We now consider $\int_0^t I_5 = 2\int_0^t\int_\Omega \chi^2 \tilde{h} X_A \partial^\alpha (f_u u_A)$. We state independently a bound on $\partial^\alpha (f_u u_A) = r_u \partial^\a u_A - \sum_{\beta + \gamma = \a} \left(\binom{\a}{\beta}d_{11}\partial^\beta u\partial^\gamma u_A + \binom{\a}{\beta}d_{12}\partial^\beta v\partial^\gamma u_A\right)$. If $\beta = \alpha$ or $\gamma = \a$, we can use \eqref{eq:initial-estimates-uLq} and \eqref{eq:HRu} to deduce that $u_A\partial^\a u, u\partial^\a u_A \in \fsL^{2-}(\Omega_2\times[0, T])$ and the same holds for $v\partial^\a u_A$ and $u_A\partial^\a v$.\medskip

  Now for $\beta < \alpha$, we interpolate and use a Sobolev injection (in dimension $d\ioe 4$) to obtain
  \begin{align}\label{eq:boundu4}
    \|\chi\partial^\beta u\|_{\fsL^4(\Omega_T)}
     & \ioe \|\chi\partial^\beta u\|^\frac12_{\fsL^\infty([0, T], \fsL^4(\Omega_2))}\|\chi\partial^\beta u\|^\frac12_{\fsL^2([0, T], \fsL^4(\Omega_2))}\nonumber\\
     & \lesssim \|\nabla(\chi \partial^\beta u)\|^\frac12_{\fsL^\infty([0, T], \fsL^2(\Omega_2))}\|\nabla (\chi\partial^\beta u)\|^\frac12_{\fsL^2([0, T], \fsL^2(\Omega_2))}\nonumber \\
     & \lesssim \left(\|\chi \nabla^{\otimes l} u\|^\frac12_{\fsL^\infty([0, T], \fsL^2(\Omega_2))}+1\right)\left(\|\chi \nabla^{\otimes l} u\|^\frac12_{\fsL^2([0, T], \fsL^2(\Omega_2))} + 1\right).
  \end{align}
  To obtain the last line, we used Poincaré inequality, the smoothness of $\chi$ and the recursion hypothesis \eqref{eq:HRu} to bound the remaining terms $\nabla^{\otimes k_1}\chi \nabla^{\otimes k_2}u$ with $k_1 + k_2 = l$, $k_1 > 0$. The same result holds for $\chi\partial^\gamma u_A$ for $\gamma < \a$. Finally using again the induction hypothesis \eqref{eq:HRu}, we have shown that for $0 < \gamma, \beta < \a$
  \begin{equation}\label{eq:boundfu}
    \|\chi^2 \partial^\beta u \partial^\gamma u_A\|_{\fsL^{2}(\Omega_2 \times[0, T])} \ioe C (\|\chi \nabla^{\otimes l} u\|_{\fsL^\infty([0, T], \fsL^2(\Omega_2))} + \|\chi \nabla^{\otimes l} u_A\|_{\fsL^\infty([0, T], \fsL^2(\Omega_2))} + 1).
  \end{equation}

  The terms involving some $\partial^\beta v$ can be easily bounded using the regularity of $v$, more precisely \eqref{eq:boundvlm1} and \eqref{eq:boundvl}.

  We can now use the previous estimates as well as \eqref{eq:boundXA2+} and \eqref{eq:boundXA2} to deduce that
  \begin{align}\label{eq:I5}
    \int_0^t I_5
     & \lesssim \sum_{\beta + \gamma =\alpha \text{ and } \beta, \gamma < \a} \|\chi^2 (\partial^\beta u \partial^\gamma u_A + \partial^\beta v \partial^\gamma u_A)\|_{\fsL^{2}(\Omega_2 \times[0, T])}\|X_A\|_{\fsL^{2}(\Omega_2 \times[0, T])} \\
     & + \|\chi (u\partial^\a u_A + u_A\partial^\a u + v\partial^\a u_A + u_A\partial^\a v)\|_{\fsL^{2-}(\Omega_2 \times[0, T])}\|\chi X_A\|_{\fsL^{2+}(\Omega_2 \times[0, T])} + 1\nonumber\\
     & \lesssim \sum_{\beta + \gamma =\alpha \text{ and } \beta, \gamma < \a} \delta\|\chi^2 \partial^\beta u \partial^\gamma u_A\|^2_{\fsL^{2}(\Omega_2 \times[0, T])} + C_\delta\\
     & + (\|u\|_{\fsL^{\infty-}(\Omega_T)}+\|u_A\|_{\fsL^{\infty-}(\Omega_T)}+\|v\|_{\fsL^{\infty}(\Omega_T)})\\
     &\times(\|\chi \partial^\a u\|_{\fsL^{2}(\Omega_2 \times[0, T])} + \|\chi \partial^\a u_A\|_{\fsL^{2}(\Omega_2 \times[0, T])})\|\chi X_A\|_{\fsL^{2+}(\Omega_2 \times[0, T])}\nonumber\\
     & \ioe \eta\|\nabla(\chi\partial^\alpha u_A)\|_{\fsL^{2}(\Omega_T)}^2 + \delta (\|\chi\nabla^{\otimes l} u \|_{\fsL^{\infty}([0, T], \fsL^{2}(\Omega))}^2+\|\chi\nabla^{\otimes l} u_A \|_{\fsL^{\infty}([0, T], \fsL^{2}(\Omega))}^2) + C_{\eta, \delta}.\nonumber
  \end{align}

  We now consider
  \begin{align*}
    \int_0^t I_6
     & = - \frac{2d_A}{S}\sum_{\gamma + \beta = \alpha \text{ and } |\gamma| > 0}\binom{\a}{\beta} \int_0^t \int_\Omega \chi \tilde{h} \partial^\gamma \tilde{k} \partial^\beta u \chi\partial^\alpha \Delta u_A  \\
     & = \frac{2d_A}{S}\sum_{\gamma + \beta =\alpha \text{ and } |\gamma| > 0} \binom{\a}{\beta}\int_0^t \int_\Omega \left[\nabla(\chi \tilde{h} \partial^\gamma \tilde{k} \partial^\beta u ) +  \tilde{h} \partial^\gamma \tilde{k} \partial^\beta u\nabla \chi\right]\cdot \chi\nabla \partial^\alpha u_A                                     \\
     & = \frac{2d_A}{S}\sum_{\gamma + \beta =\alpha \text{ and } |\gamma| > 0} \binom{\a}{\beta}\int_0^t \int_\Omega \left[\nabla(\chi \tilde{h} \partial^\gamma \tilde{k} \partial^\beta u ) +  \tilde{h} \partial^\gamma \tilde{k} \partial^\beta u\nabla \chi\right]\cdot (\nabla (\chi\partial^\alpha u_A) - \partial^\alpha u_A\nabla\chi) \\
     & \ioe \eta (\|\nabla (\chi\partial^\alpha u_A)\|_{\fsL^2(\Omega_T)} + \|\partial^\alpha u_A\nabla\chi\|_{\fsL^2(\Omega_T)})^2\\
     &+ \sum_{\gamma + \beta =\alpha \text{ and } |\gamma| > 0} C_\eta
    \|\nabla(\chi \tilde{h} \partial^\gamma \tilde{k} \partial^\beta u ) +  \tilde{h} \partial^\gamma \tilde{k} \partial^\beta u\nabla \chi\|^2_{\fsL^2(\Omega_T)}.
  \end{align*}

  Thanks to the induction hypothesis, we know that $\|\partial^\alpha u_A\nabla\chi\|_{\fsL^2(\Omega_T)} \ioe C$. We can also bound (we consider two different cases, depending on whether $\beta = 0$ or not)
  \begin{align*}
     & \sum_{\gamma + \beta =\alpha \text{ and } |\gamma| > 0}\|\tilde{h} \partial^\gamma \tilde{k} \partial^\beta u\nabla \chi\|^2_{\fsL^2(\Omega_T)}\\
     & \ioe C \|u\|_{\fsL^{4}(\Omega_T)}^2\|\partial^\a v\|_{\fsL^{4}(\Omega_2\times[0, T])}^2+ C\|\nabla^{\otimes (l - 1)}u\|_{\fsL^2(\Omega_2\times[0, T])}^2\|\nabla^{\otimes (l - 1)}v\|_{\fsL^\infty(\Omega_2\times[0, T])}^2\\
     & \ioe C.
  \end{align*}
  To obtain the last line, we used \eqref{eq:boundvlm1} and \eqref{eq:HRu}.\medskip

  We will now bound $S_{\gamma, \beta} := \nabla(\chi \tilde{h} \partial^\gamma \tilde{k} \partial^\beta u)$ (for $\gamma + \beta = \a$, $|\gamma|>0$).
  Let us start with $\gamma = \a$. Using \eqref{eq:boundvlp1} and \eqref{eq:initial-estimates-uLq} we have that $\nabla(\partial^\alpha \tilde{k}) u\in\fsL^2(\Omega_2\times [0, T])$. Similarly, $\partial^\alpha \tilde{k} \nabla u\in\fsL^2(\Omega_2\times [0, T])$ (we use \eqref{eq:boundvl} and \eqref{eq:HR3}). Thus $S_{\a, 0}\in\fsL^2(\Omega_2\times[0, T])$.
  For the other terms $S_{\gamma, \beta}$, one notices that $\partial^\gamma \tilde k \nabla\partial^\beta u \in \fsL^{2}(\Omega_2\times[0, T])$ by \eqref{eq:boundvlm1} and \eqref{eq:HRu}. Similarly $\nabla\partial^\gamma \tilde k \partial^\beta u \in \fsL^{2}(\Omega_2\times[0, T])$ by \eqref{eq:boundvl} and \eqref{eq:HR3}. We have thus shown that $S_{\gamma, \beta}\in\fsL^2(\Omega_2\times[0, T])$.

  Hence we deduce that
  \begin{align*}
    \int_0^t I_6
    \ioe \eta \|\nabla (\chi\partial^\alpha u_A)\|^2_{\fsL^2(\Omega_T)} + C_\eta.
  \end{align*}

  We can now bound the last term
  \begin{align*}
    \int_0^t I_7
    &= -\sum_{\gamma + \beta =\alpha \text{ and } |\gamma| > 0} \binom{\a}{\beta} \frac{2}{S}\int_0^t\int_\Omega \chi^2 \tilde{h} X_A \big[\partial^\gamma \partial_t \tilde{k} \partial^\beta u\\
    &+  \partial^\gamma \tilde{k} \partial^\beta (d_A\Delta u_A + (d_A + d_B)\Delta u_B + f_u(u_A + u_B))\big].
  \end{align*}

  Similarly as in  \eqref{eq:HR3}, we see that for $\theta =\frac{3}{4}$, we have
  \begin{align*}
    \|\chi \nabla^{\otimes (l - 1)}u\|_{\fsL^{8}(\Omega_T)}^2
    &\ioe \|\chi\nabla^{\otimes (l-1)}u\|^{2\theta}_{\fsL^\infty([0, T], \fsL^6({\Omega_2}))} \|\chi\nabla^{\otimes (l-1)}u\|_{\fsL^2([0, T], \fsL^{\infty}(\Omega_2))}^{2(1 - \theta)}\\
    &\lesssim \|\chi\nabla^{\otimes l}u\|^2_{\fsL^\infty([0, T], \fsL^2({\Omega_2}))} + \|\nabla(\chi\nabla^{\otimes l}u)\|_{\fsL^2({\Omega_2\times[0, T]})}^2 + 1,
  \end{align*}
  where we obtain the last line thanks to a Sobolev embedding (and we use the smoothness of $\chi$ and \eqref{eq:HRu} to bound the remaining terms $\nabla^{\otimes k_1}\chi\nabla^{\otimes k_2}u$).

  For  $0 < \gamma < \alpha$, using \eqref{eq:boundXA2}, \eqref{eq:boundvlp1} and the previous estimate (recalling $|\beta|\ioe l - 1$) we obtain that
  \begin{align*}
    \int_0^t\int_\Omega \chi^2 \tilde{h} X_A \partial^\gamma \partial_t \tilde{k} \partial^\beta u
    &\ioe C\|\chi X_A\|_{\fsL^{2}(\Omega_T)}\|\chi \partial^\beta u\|_{\fsL^{8}({\Omega_T})}\|\partial^\gamma \partial_t \tilde{k}\|_{\fsL^{\frac{10}{3}}({\Omega_2\times[0, T]})}\\
    &\ioe \delta\|\chi \partial^\beta u\|_{\fsL^{8}(\Omega_T)}^2 + C_\delta\\
    &\ioe \delta \left(\|\chi\nabla^{\otimes l}u\|^2_{\fsL^\infty([0, T], \fsL^2({\Omega}))} + \|\nabla(\chi\nabla^{\otimes l}u)\|_{\fsL^2({\Omega_T})}^2\right) + C_{\delta}.
  \end{align*}
  
  We have to bound $\int_0^t\int_\Omega \chi^2 \tilde{h} u X_A \partial^\a \partial_t \tilde{k} = - \int_0^t\int_\Omega \partial_i (\chi^2 \tilde{h} u X_A) \partial^{\a'} \partial_t \tilde{k}$, where $i$ is any index such that $\alpha_i > 0$ and $\a' + e_i = \a$ (where $e_i$ is the multi-index such that the $i$-th index is $1$ and all the others are $0$).
  But
  \begin{equation*}
    \partial_i (\chi^2 \tilde{h} u X_A) = \partial_i (\chi X_A)\chi u\tilde h + \chi X_A \chi u \partial_i \tilde h + \chi X_A \tilde h \partial_i (\chi u).
  \end{equation*}
  We notice that $\frac{3}{10} + \frac{3}{10} +\frac{2}{5} = 1 $, thus thanks to \eqref{eq:HR3}, \eqref{eq:boundvlp1}, \eqref{eq:boundXA2+}, we have
  \begin{align*}
    &- \int_0^t\int_\Omega \left[\chi X_A \chi u \partial_i \tilde h + \chi X_A \tilde h \partial_i (\chi u)\right]\partial^{\a'} \partial_t \tilde{k}\\
    &\ioe C\|\chi X_A\|_{\fsL^{{\frac{5}{2}}}(\Omega_2\times[0, T])}\|\partial^{\a'} \partial_t \tilde{k}\|_{\fsL^{{\frac{10}{3}}}(\Omega_2\times[0, T])}(\| u\|_{\fsL^{{\frac{10}{3}}}(\Omega_2\times[0, T])}+ \|\partial_i u\|_{\fsL^{{\frac{10}{3}}}(\Omega_2\times[0, T])})\\
    &\ioe \eta\|\nabla(\chi\partial^\alpha u_A)\|_{\fsL^{2}(\Omega_T)}^2 + \delta \|\chi\partial^\alpha u_A \|_{\fsL^{\infty}([0, T], \fsL^{2}(\Omega))}^2 + C_{\eta, \delta}.
  \end{align*}

  Then, we can bound
  \begin{align*}
    &-\int_0^t\int_\Omega \partial_i (\chi X_A)\chi u\tilde h\partial^{\a'}\partial_t\tilde k\\
     & \ioe \eta \|\partial_i (\chi X_A)\|^2_{\fsL^{2}(\Omega_T)} + C_\eta\|u\tilde h\|^2_{\fsL^{5}(\Omega_T)}\|\chi \partial^{\a'}\partial_t\tilde k\|_{\fsL^{\frac{10}{3}}(\Omega_T)}^2\\
     & \ioe \eta \left(\|\nabla (\chi \partial^\alpha u_A)\|^2_{\fsL^2(\Omega_T)} + C\sum_{\gamma' + \beta' =\alpha \text{ and } \gamma' > 0} \|\partial_i (\chi(\partial^{\gamma'} \tilde k)(\partial^{\beta'} u))\|^2_{\fsL^2(\Omega_T)}\right)+ C_\eta.
  \end{align*}

  In the case $\gamma' < \alpha$, $\partial_i (\partial^{\gamma'} \tilde k)$ is bounded in $\fsL^{10}(\Omega_2\times[0, T])$ by \eqref{eq:boundvl} and $\partial^{\beta'} u$ is bounded in $\fsL^\frac{10}{3}(\Omega_2\times[0, T])$ by \eqref{eq:HR3}, furthermore $(\partial^{\gamma'} \tilde k)$ is bounded in $\fsL^\infty(\Omega_2\times[0, T])$ by \eqref{eq:boundvlm1} and $\|\partial_i \partial^{\beta'} u\|_{\fsL^2(\Omega_2\times[0, T])} \ioe C$ by \eqref{eq:HRu} (note that $1 + |\beta'| \ioe l$). Thus, we have $\|\partial_i (\chi(\partial^{\gamma'} \tilde k)(\partial^{\beta'} u))\|_{\fsL^2(\Omega_T)}\ioe C$.\medskip

  If $\gamma' = \alpha$, we have $\partial_i\partial^\a\tilde k \in\fsL^{\frac{10}{3}}(\Omega_2\times[0, T])$ by \eqref{eq:boundvlp1} and $u\in\fsL^{\infty-}(\Omega_T)$ by \eqref{eq:initial-estimates-uLq}. Similarly, $\partial^\a \tilde k\in\fsL^{10}(\Omega_2\times[0, T])$ by \eqref{eq:boundvl}  and $\partial_i u \in\fsL^{\frac{10}{3}}(\Omega_2\times[0, T])$ by \eqref{eq:HR3}. Hence, we obtain $\|\partial_i (\chi(\partial^{\alpha} \tilde k) u)\|_{\fsL^2(\Omega_T)}\ioe C$.\medskip

  Thus, we have shown
  \begin{align*}
    -\int_0^t\int_\Omega \partial_i (\chi X_A)\chi u\tilde h\partial^{\a'}\partial_t\tilde k
     & \ioe \eta \|\nabla (\chi \partial^\alpha u_A)\|^2_{\fsL^2(\Omega_T)} + C_\eta.
  \end{align*}

  Hence, we obtain
  \begin{align*}
    \int_0^t\int_\Omega \chi^2 \tilde{h} u X_A \partial^\a \partial_t \tilde{k}
     & \ioe \eta \|\nabla (\chi \partial^\alpha u_A)\|^2_{\fsL^2(\Omega_T)} +  \delta \|\chi\partial^\alpha u_A \|_{\fsL^{\infty}([0, T], \fsL^{2}(\Omega))}^2 + C_{\eta, \delta}.
  \end{align*}

  We then consider $\int_0^t\int_\Omega \chi^2 \tilde{h} X_A \partial^\gamma \tilde{k} \partial^\beta (d_A\Delta u_A + (d_A + d_B)\Delta u_B)$. For the case when $\gamma < \alpha$, remembering that $\chi\tilde{h}\partial^\gamma \tilde{k}$ is bounded in $\fsL^\infty(\Omega_T)$ by \eqref{eq:boundvlm1}, we can deduce thanks to \eqref{eq:boundXA2} that
  \begin{align*}
    &\int_0^t\int_\Omega \chi^2 \tilde{h} X_A \partial^\gamma \tilde{k} \partial^\beta (d_A\Delta u_A + (d_A + d_B)\Delta u_B)\\
     & \ioe \eta (\|\chi\partial^\beta \Delta u_A\|^2_{\fsL^2(\Omega_T)} + \|\chi\partial^\beta \Delta u_B\|^2_{\fsL^2(\Omega_T)}) + C_\eta \|\chi X_A\|^2_{\fsL^2(\Omega_T)} \\
     & \ioe \eta (\|\chi\nabla^{\otimes (l+1)} u_A\|^2_{\fsL^2(\Omega_T)} + \|\chi\nabla^{\otimes (l+1)} u_B\|^2_{\fsL^2(\Omega_T)}) + C_\eta.
  \end{align*}

  We recall that for any $|\a'| = l$, thanks to \eqref{eq:HRu},
  \begin{align*}
    \|\chi\nabla\partial^{\a'} u_C\|^2_{\fsL^2(\Omega_2\times[0, T])}
     & \ioe \|\nabla(\chi\partial^{\a'} u_C)\|^2_{\fsL^2(\Omega_2\times[0, T])} + \|\nabla\chi \partial^{\a'} u_C\|^2_{\fsL^2(\Omega_2\times[0, T])} \\
     & \ioe \|\nabla(\chi\partial^{\a'} u_C)\|^2_{\fsL^2(\Omega_2\times[0, T])} + C.
  \end{align*}

  Thus, we have shown that when $0<\gamma<\a$,
  \begin{align*}
     & \int_0^t\int_\Omega \chi^2 \tilde{h} X_A \partial^\gamma \tilde{k} \partial^\beta (d_A\Delta u_A + (d_A + d_B)\Delta u_B)                                       \\
     & \ioe \eta (\|\nabla(\chi\nabla^{\otimes l} u_A)\|^2_{\fsL^2(\Omega_2\times[0, T])} + \|\nabla(\chi\nabla^{\otimes l} u_B)\|^2_{\fsL^2(\Omega_2\times[0, T])}) + C_\eta.
  \end{align*}

  We now tackle the case when $\gamma = \a$. We recall that $l \soe 2$, thus (after using a Poincaré inequality and the smoothness of $\chi$ as well as \eqref{eq:HRu} to bound the remaining terms $\nabla^{\otimes k_1}\chi \nabla^{\otimes k_2}u$ with $k_1 + k_2 = l$, $k_1 > 0$) similar computations as in \eqref{eq:HR3} show that
  \begin{equation*}
    \|\chi\Delta u_C\|^2_{\fsL^\frac{10}{3}(\Omega_T)} \ioe \|\chi\partial^\a u_C\|_{\fsL^\infty([0, T], \fsL^2(\Omega))}^2 + \|\nabla(\chi\partial^\a u_C)\|_{\fsL^2([0, T], \fsL^2(\Omega))}^2,
  \end{equation*}
  using \eqref{eq:boundXA2}, \eqref{eq:boundvl} and Young's inequality this is enough to conclude as previously that
  \begin{align*}
    &\int_0^t\int_\Omega \chi^2 \tilde{h} X_A \partial^\a \tilde{k} (d_A\Delta u_A + (d_A + d_B)\Delta u_B)\\
    &\ioe \delta\left(\|\chi\Delta u_A\|^2_{\fsL^\frac{10}{3}(\Omega_T)} + \|\chi\Delta u_B\|^2_{\fsL^\frac{10}{3}(\Omega_T)}\right) + C_\delta\\
    &\ioe \delta \bigg(\|\nabla(\chi\nabla^{\otimes l} u_A)\|^2_{\fsL^2(\Omega_2\times[0, T])} + \|\nabla(\chi\nabla^{\otimes l} u_B)\|^2_{\fsL^2(\Omega_2\times[0, T])}\\
    & + \|\chi\partial^\a u_A\|_{\fsL^\infty([0, T], \fsL^2(\Omega))}^2 + \|\chi\partial^\a u_B\|_{\fsL^\infty([0, T], \fsL^2(\Omega))}^2\bigg) + C_\delta.
  \end{align*}

  Finally,  we want to bound $\int_0^t\int_\Omega \chi^2 \tilde{h} X_A \partial^\gamma \tilde{k} \partial^\beta \left[f_u(u_A + u_B)\right]$. We first start with the case when $0<\gamma<\a$. We can use the same computations as in \eqref{eq:boundu4} to first deduce that for any ${\beta'} \in\N^d$
  \begin{equation*}
    \|\chi\partial^{\beta'} u\|_{\fsL^4(\Omega_T)} \lesssim \left(\|\chi\nabla^{\otimes (|{\beta'}| + 1)} u \|^{\frac12}_{\fsL^{\infty}([0, t], \fsL^{2}(\Omega))}+1\right)\left(\|\chi\nabla^{\otimes (|{\beta'}| + 1)} u \|^{\frac12}_{\fsL^{2}(\Omega_t)} + 1\right).
  \end{equation*}

  Hence, similarly as in the computations of \eqref{eq:boundfu}, we can first deduce thanks to the recursion hypothesis \eqref{eq:HRu} and \eqref{eq:boundvlm1} that for any $\beta_1 + \beta_2 = \beta$, we have (recalling that $0 < \gamma$, we get $|\beta|< l$)
   \begin{equation*}
    \|\chi^2\partial^{\beta_1} v\partial^{\beta_2}u\|_{\fsL^2(\Omega_T)} + \|\chi^2\partial^{\beta_1} u\partial^{\beta_2}u\|_{\fsL^2(\Omega_T)} \lesssim \|\chi\nabla^{\otimes l} u \|_{\fsL^{\infty}([0, T], \fsL^{2}(\Omega))} + 1.
  \end{equation*}

  Hence, thanks to Leibnitz formula, we obtain
  \begin{equation*}
    \|\chi^2\partial^{\beta} (f_uu)\|_{\fsL^2(\Omega_T)} \lesssim \|\chi\nabla^{\otimes l} u \|_{\fsL^{\infty}([0, T], \fsL^{2}(\Omega))} + 1.
  \end{equation*}

  Using \eqref{eq:boundvlm1} and \eqref{eq:boundXA2}, we deduce that
  \begin{align*}
    \int_0^t\int_\Omega \chi^2 \tilde{h} X_A \partial^\gamma \tilde{k} \partial^\beta f_u(u_A + u_B)
    & \ioe \delta \|\chi\nabla^{\otimes l} u \|_{\fsL^{\infty}([0, T], \fsL^{2}(\Omega))}^2 + C_{\delta}.\nonumber
  \end{align*}

  If $\gamma = \alpha$, we can use \eqref{eq:boundvl}, \eqref{eq:boundXA2} and \eqref{eq:initial-estimates-uLq} to deduce that
  \begin{align*}
    &\int_0^t\int_\Omega \chi^2 \tilde{h} X_A \partial^\a \tilde{k} f_u(u_A + u_B)\\
    &\lesssim \|X_A\|_{\fsL^{2}(\Omega_2\times[0, T])} \|\partial^\a \tilde{k}\|_{\fsL^{10}(\Omega_2\times[0, T])}\|f_u(u_A + u_B)\|_{\fsL^{\frac{5}{2}}(\Omega_2\times[0, T])}\\
    &\ioe C.
  \end{align*}
  Finally, we have shown that (using that $u_A + u_B = u$)
  \begin{align*}
    \int_0^t I_7
    &\ioe \delta (\|\chi\nabla^{\otimes l} u_A \|_{\fsL^{\infty}([0, T], \fsL^{2}(\Omega))}^2 +\|\chi\nabla^{\otimes l} u_B \|_{\fsL^{\infty}([0, T], \fsL^{2}(\Omega))}^2)\\
    &+ (\eta + \delta) \left( \|\nabla(\chi\nabla^{\otimes l} u_A)\|_{\fsL^{2}(\Omega_T)}^2 + \|\nabla(\chi\nabla^{\otimes l} u_B)\|_{\fsL^{2}(\Omega_T)}^2 \right)+ C_{\delta, \eta}.
  \end{align*}

  Let $\Ecal_l:=  \sum_{|\alpha| = l} \Ecal_{A, \alpha} +\Ecal_{B, \a}$. We collect all our estimates to obtain after integration on $[0, t]$:
  \begin{align*}
     & \Ecal_l(t) + \sum_{|\alpha| = l} \int_0^t\int_\Omega \left(|\nabla (\chi \partial^\alpha u_A)|^2 + |\nabla (\chi \partial^\alpha u_B)|^2\right) + \sum_{|\alpha| = l}\frac{2}{\eps} \int_0^t\int_\Omega \chi^2 |\partial^\alpha Q|^2 \\
     & \lesssim \Ecal(0) + (\eta+\delta) \sum_{|\alpha| = l}\int_0^T\int_\Omega \left(|\nabla (\chi \partial^\alpha u_A)|^2 + |\nabla (\chi \partial^\alpha u_B)|^2 \right)\nonumber \\
     & + \delta \sum_{|\alpha| = l}\left(\|\chi\partial^\alpha u_A \|_{\fsL^{\infty}([0, T], \fsL^{2}(\Omega))}^2 + \|\chi\partial^\alpha u_B \|_{\fsL^{\infty}([0, t], \fsL^{2}(\Omega))}^2\right)+ C_{\eta, \delta}.\nonumber
  \end{align*}

  We now take the supremum over $t\in [0, T]$, to deduce

  \begin{align}\label{fge}
    & \sup_{0 < t < T}\Ecal_l(t) + \sum_{|\alpha| = l} \int_0^T\int_\Omega \left(|\nabla (\chi \partial^\alpha u_A)|^2 + |\nabla (\chi \partial^\alpha u_B)|^2\right) + \sum_{|\alpha| = l}\frac{2}{\eps} \int_0^T\int_\Omega \chi^2 |\partial^\alpha Q|^2 \\
    & \lesssim \Ecal(0) + (\eta+\delta)\sum_{|\alpha| = l}\int_0^T\int_\Omega \left(|\nabla (\chi \partial^\alpha u_A)|^2 + |\nabla (\chi \partial^\alpha u_B)|^2 \right)\nonumber \\
    & + \delta \sum_{|\alpha| = l}\left(\|\chi\partial^\alpha u_A \|_{\fsL^{\infty}([0, T], \fsL^{2}(\Omega))}^2 + \|\chi\partial^\alpha u_B \|_{\fsL^{\infty}([0, T], \fsL^{2}(\Omega))}^2\right)+ C_{\eta, \delta}.\nonumber
  \end{align}

  To close the loop, we notice that for $0<t<T$, (using \eqref{eq:boundvlm1} and \eqref{eq:HRu} for $ 0 < \gamma < \a$)
  \begin{align*}
    \|\chi\partial^\alpha u_A (t)\|_{\fsL^{2}(\Omega)}^2
     & \lesssim \Ecal_{A, \alpha}(t) + \sum_{\gamma + \beta =\alpha \text{ and } \gamma > 0} \|\chi\partial^\gamma \tilde k\partial^\beta u \|_{\fsL^{\infty}([0, T], \fsL^{2}(\Omega_2))}^2 \nonumber\\
     & \lesssim \Ecal_{A, \alpha}(t) + 1 + \|\chi u\partial^\a v\|_{\fsL^{\infty}([0, T], \fsL^{2}(\Omega))}^2\nonumber \\
     & \lesssim \Ecal_{A, \alpha}(t) + 1 + \|\chi\partial^\a v \|_{\fsL^\infty([0, T], \fsL^{2+}(\Omega_2))}^2\|u\|_{\fsL^\infty([0, T], \fsL^{\infty-}(\Omega))}^2.
  \end{align*}

  We can now take the supremum over $0 < t < T$ to obtain thanks to \eqref{eq:initial-estimates-uLq}
  \begin{align}\label{eq:intermediate}
    \|\chi\partial^\alpha u_A\|_{\fsL^\infty([0, T], \fsL^{2}(\Omega))}^2
     & \ioe C\left(\sup_{0 < t < T} \Ecal_{A, \alpha}(t) + 1 + \|\chi\partial^\a v \|_{\fsL^\infty([0, T], \fsL^{2+}(\Omega_2))}^2\right).
  \end{align}

  By \eqref{eq:boundvlinf2+}, we have for any $\delta' > 0$
  \begin{align*}
    \|\chi\partial^\a v \|_{\fsL^\infty([0, T], \fsL^{2+}(\Omega_2))}^2
     & \ioe \delta' \|\chi\partial^\alpha u\|_{\fsL^{\infty}([0, T], \fsL^2(\Omega_2))}^2 + C_{\delta'} (1 + \|\nabla(\chi\partial^\alpha u)\|_{\fsL^2(\Omega_2\times[0, T])}^2).
  \end{align*}

  We can sum \eqref{eq:intermediate} with the equivalent expression for $u_B$ and choose $\delta' > 0$ small enough to obtain
  \begin{align}\label{b7}
    \|\chi\partial^\alpha u\|_{\fsL^\infty([0, T], \fsL^{2}(\Omega))}^2
    & \lesssim \|\chi\partial^\alpha u_A\|_{\fsL^\infty([0, T], \fsL^{2}(\Omega))}^2 +  \|\chi\partial^\alpha u_B\|_{\fsL^\infty([0, T], \fsL^{2}(\Omega))}^2\nonumber\\
    & \ioe C(\sup_{0 < t < T}\Ecal_{A, \alpha}(t) + \sup_{0 < t < T}\Ecal_{B, \alpha}(t) + \|\nabla(\chi\partial^\alpha u)\|_{\fsL^2(\Omega_2\times[0, T])}^2 + 1).
  \end{align}

  We can now inject \eqref{b7} in \eqref{fge} to obtain
  \begin{align*}
     & \sup_{0 < t < T }\Ecal_l(t) + \sum_{|\alpha| = l} \left(\int_0^T\int_\Omega |\nabla (\chi \partial^\alpha u_A)|^2 + |\nabla (\chi \partial^\alpha u_B)|^2\right) + \frac{1}{\eps} \int_0^T\int_\Omega \chi^2 |\nabla^{\otimes l} Q|^2 \\
     & \lesssim \Ecal_l(0) + \sum_{|\alpha| = l}\eta\int_0^T\int_\Omega |\nabla (\chi \partial^\alpha u_A)|^2 + |\nabla (\chi \partial^\alpha u_B)|^2 \nonumber           \\
     & + \delta \sum_{|\alpha| = l}C(\sup_{0 < t < T}\Ecal_{l}(t) + \|\nabla(\chi\partial^\alpha u)\|_{\fsL^2(\Omega_2 \times [0, T])}^2) + C_{\eta, \delta}.\nonumber
  \end{align*}
  We choose $\eta, \delta > 0$ small enough to get
  \begin{align*}
    \sup_{0 < t < T}\Ecal_l(t) + \sum_{|\alpha| = l} \left(\int_0^T\int_\Omega |\nabla (\chi \partial^\alpha u_A)|^2 + |\nabla (\chi \partial^\alpha u_B)|^2\right) + \frac{1}{\eps} \int_0^T\int_\Omega \chi^2 |\nabla^{\otimes l} Q|^2\ioe C + \Ecal_l(0).
  \end{align*}
  Using again \eqref{b7}, we derive \eqref{eq:HRu1}, which enables to finish the induction and concludes the proof of \cref{thm:regu2}.
\end{proof}

\section{Rate of convergence : proof of \texorpdfstring{\cref{thm:conv}}{Theorem 1}}\label{sec:convergence}
In this section we prove \cref{thm:conv}. Similarly as previously we cut the proof in two parts, one where we consider convergence in the whole domain in $\fsL^\infty\fsL^2$ and $\fsL^2\fsH^1$ and with a rate $O(\eps^\frac 12)$ and one where we restrict ourself to the interior but we enhance the spaces in which the convergence holds as well as the dependency with respect to $\eps_{\init, l}$.

We recall \eqref{eq:def} and we also define
\begin{equation}\label{def:maj}
  \left\{
  \begin{aligned}
     & U_B^\eps := u_B^\eps - u_B, \quad U_A^\eps := u_A^\eps - u_A,\\
     & H^\eps := h(v^\eps) - h(v), \quad K^\eps := k(v^\eps) - k(v),\\
     & F_u^\eps := f_u(u^\eps, v^\eps)u^\eps - f_u(u, v)u, \quad F_v^\eps := f_v(u^\eps, v^\eps)v^\eps - f_v(u, v)v.
  \end{aligned}
  \right.
\end{equation}
We also recall the definitions $u_A := \frac{k(v)}{S}u$ and $u_B := \frac{h(v)}{S}u$.

\subsection{Regularity of the limit problem}
We first start by showing some estimates on the limit problem \eqref{eq:SKT} in the spirit of \cite{desvillettes2024a}.

\begin{lemm}\label{thm:improved_regu}
  Let $d \ioe 4$ and $\Omega \subset \R^d$ be a smooth bounded domain, and let $d_u,d_v, \sigma >0$, $r_u,r_v\ge 0$, $d_{ij} > 0$ for $i,j=1,2$. Let $\a > 0$, and suppose that $u_\init, v_\init \ge 0$ are initial data which lie in $\Ccal^{2, \a}(\overline{\Omega})$ and are compatible with the Neumann boundary conditions.\medskip 

  Then, the global solutions $(u, v)$ to \eqref{eq:SKT}--\eqref{eq:SKTbc} constructed in \cref{thm:globalSKT} belong to $\Ccal^{2, \beta}(\overline{\Omega}\times[0, T])$ for some $\beta > 0$.\medskip
  
  In particular we have $u, \nabla u \in \fsL^\infty(\Omega_T)$.  
\end{lemm}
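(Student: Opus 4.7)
The plan is to bootstrap the $\fsL^q$--Sobolev bounds of \cref{thm:globalSKT} into $\Ccal^{2,\beta}$ Hölder estimates by successively treating the two equations as linear parabolic problems with coefficients of improving regularity. Throughout, the $v$-equation is essentially a heat equation, while the $u$-equation, after expanding the cross-diffusion, becomes a linear parabolic equation whose coefficients are controlled by $v$.

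\textbf{Step 1 ($\fsL^\infty$ bound on $u$).} By \cref{thm:globalSKT}, $v \in \fsL^\infty([0,T], \fsW^{1,\infty}(\Omega))$ and $\partial_t v, \Delta v \in \fsL^q(\Omega_T)$ for all $q<\infty$. Expanding the cross-diffusion in the $u$-equation yields
\begin{equation*}
\partial_t u - (d_u + \sigma v)\Delta u - 2\sigma \nabla v \cdot \nabla u = \bigl(f_u(u,v) + \sigma \Delta v\bigr)\,u,
\end{equation*}
a linear parabolic equation with uniformly elliptic and bounded principal coefficient, bounded drift, and zero-order coefficient in $\fsL^q(\Omega_T)$ for every $q<\infty$ (recalling that $u$ itself is already in $\fsL^\infty([0,T],\fsL^q(\Omega))$). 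Since $u_\init \in \fsL^\infty$, a Moser/Aronson-Serrin iteration with Neumann data, or a direct adaptation of the De Giorgi-Nash-Moser argument underlying \cref{thm:DGNM-bound}, produces $u \in \fsL^\infty(\Omega_T)$.

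\textbf{Step 2 (initial Hölder regularity).} Plugging $u \in \fsL^\infty$ into the $v$-equation, its right-hand side is bounded; parabolic maximal regularity plus a Morrey embedding then yields $v \in \Ccal^{0,\alpha_0}(\overline{\Omega}\times[0,T])$ for some $\alpha_0>0$. The coefficient $b := d_u + \sigma v$ is thus bounded below by $d_u>0$, bounded above and Hölder, so De Giorgi-Nash-Moser applied to the divergence-form equation $\partial_t u - \Delta(bu) = f_u(u,v)\,u$ with bounded right-hand side and Neumann boundary condition gives $u \in \Ccal^{0,\alpha_1}(\overline{\Omega}\times[0,T])$ for some $\alpha_1 \in (0,\alpha_0]$.

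\textbf{Step 3 (Schauder bootstrap).} With $u,v \in \Ccal^{0,\alpha_1}$, the Hölder continuity of $f_v(u,v)v$ together with the compatibility of $v_\init \in \Ccal^{2,\alpha}$ with the Neumann condition allows parabolic Schauder theory to upgrade $v$ to $\Ccal^{2+\alpha_1,1+\alpha_1/2}(\overline{\Omega}\times[0,T])$. Hence $b \in \Ccal^{2+\alpha_1}$, and since $\nabla v \cdot n = 0$ on $\partial\Omega$ reduces the original boundary condition $\nabla(bu)\cdot n = 0$ to the pure Neumann condition $\partial_n u = 0$, the equation
\begin{equation*}
\partial_t u - b\Delta u - 2\sigma \nabla v\cdot \nabla u + (d_{11}u + d_{12}v - r_u - \sigma\Delta v)\,u = 0
\end{equation*}
has Hölder coefficients (treating the $u^2$ term as a $\Ccal^{\alpha_1}$ semilinear right-hand side). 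Parabolic Schauder with Neumann data then gives $u \in \Ccal^{2,\beta}(\overline{\Omega}\times[0,T])$ for some $\beta>0$, from which $u, \nabla u \in \fsL^\infty(\Omega_T)$ is immediate. The main obstacle is Step 1: the cross-diffusion blocks a direct maximum-principle argument for $u$, but the above linearisation — treating the already-good regularity of $v$ as input — reduces the question to a standard linear parabolic $\fsL^\infty$ bound.
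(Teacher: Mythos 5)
Your proof is correct in substance but follows a genuinely different route from the paper. The paper never linearises the $u$-equation directly: it introduces the auxiliary quantity $w:=(d_u+\sigma v)u$, observes that $w$ solves $\frac{1}{d_u+\sigma v}\partial_t w-\Delta w=\frac{(d_u+\sigma v)f_uu+\sigma u\partial_t v}{d_u+\sigma v}\in \fsL^{\infty-}(\Omega_T)$, and applies the rough-coefficient De Giorgi result \cref{thm:DGNM} to $w$; Hölder continuity of $u$ then follows by dividing by $d_u+\sigma v\soe d_u$, and the $\Ccal^{2,\beta}$ regularity is recovered by Schauder estimates for $v$, then for $w$, and finally by differentiating the identity $w=(d_u+\sigma v)u$ twice. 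A practical consequence is that the paper obtains boundedness and Hölder continuity of $u$ in one stroke (from the Hölder bound on $w$), whereas your argument must first establish $u\in\fsL^\infty(\Omega_T)$ as a separate step. That step is the only place where your write-up is imprecise: as stated, you treat a non-divergence-form equation with leading coefficient $d_u+\sigma v$ that is merely bounded and a zero-order coefficient $f_u+\sigma\Delta v$ that is only in $\fsL^q(\Omega_T)$, for which neither Aronson--Serrin (a divergence-form theory) nor \cref{thm:DGNM} (which requires the special structure $a\partial_t w-\Delta w=f$ together with $\partial_t w\soe0$) applies directly. The fix is immediate and you essentially use it in Step 2: rewrite $\Delta[(d_u+\sigma v)u]=\nabla\cdot\big((d_u+\sigma v)\nabla u+\sigma u\nabla v\big)$, so that the equation is in divergence form with bounded elliptic coefficient, bounded drift $\sigma\nabla v$ (from \cref{thm:globalSKT}), conormal boundary condition coinciding with \eqref{eq:SKTbc}, and right-hand side $f_uu\in\fsL^q(\Omega_T)$ for $q>\frac{d+2}{2}$; the standard Moser iteration then yields $u\in\fsL^\infty(\Omega_T)$ and the subsequent Hölder bound. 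With that correction, Steps 2 and 3 (Schauder bootstrap on $v$, reduction of the boundary condition to $\partial_nu=0$ via $\nabla v\cdot n=0$, then Schauder for the non-divergence form of the $u$-equation with Hölder coefficients) are sound and deliver the same conclusion as the paper.
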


\begin{proof}
  We first compute
  \begin{align*}
    \partial_t \left[(d_u + \sigma v)u\right] - (d_u + \sigma v)\Delta((d_u + \sigma v) u ) = (d_u + \sigma v)f_u u + \sigma u \partial_t v \in \fsL^{\infty -}(\Omega_T),
  \end{align*}
  the last estimate coming from \cref{thm:globalSKT}.\medskip
  
  Hence, by \cref{thm:DGNM} applied on $w := (d_u + \sigma v)u$, we deduce that $w\in\Ccal^{0, \beta}(\overline{\Omega}\times[0, T])$ for some $\beta > 0$.\medskip

  Then, we use $\partial_t v - d_v\Delta v = f_v v\in\fsL^{\infty -}(\Omega_T)$ to deduce that $v\in\Ccal^{0, \beta}(\overline{\Omega}\times[0, T])$. Let $x, x'\in\Omega, t, t'\in [0, T]$, for any function $g(t, x)$, we denote $g:= g(t, x)$ and $g' := g(t', x')$ hence
  \begin{align*}
    \frac{|u - u'|}{|x - x'|^\beta + |t - t'|^\frac{\beta}{2}}
    &= \frac{|w(d_u + \sigma v') - w'(d_u + \sigma v)|}{(d_u + \sigma v)(d_u + \sigma v')}\frac{1}{|x - x'|^\beta + |t - t'|^\frac{\beta}{2}}\\
    &\ioe \frac{|(w - w')(d_u + \sigma v')| + w'\sigma |v - v'|}{d_u^2(|x - x'|^\beta + |t - t'|^\frac{\beta}{2})}\\
    &\ioe C.
  \end{align*}
  Hence, we deduce that $u \in\Ccal^{0, \beta}(\overline{\Omega}\times[0, T])$.\medskip

  If we go back to $v$, we see that $\partial_t v - d_v\Delta v \in\Ccal^{0, \beta}(\overline{\Omega}\times[0, T])$ and thus using $v_\init \in \Ccal^{2, \alpha}(\Omega_T)$, we deduce by Schauder estimates (up to changing $\beta$) that $v \in \Ccal^{2, \beta}(\overline{\Omega}\times[0, T])$, in particular that $\partial_t v \in \Ccal^{0, \beta}(\overline{\Omega}\times[0, T])$.\medskip

  We deduce from this that $\partial_t \left[(d_u + \sigma v)u\right] - (d_u + \sigma v)\Delta((d_u + \sigma v) u ) \in \Ccal^{0, \beta}(\overline{\Omega}\times[0, T])$, with  $v \in \Ccal^{2, \beta}(\overline{\Omega}\times[0, T])$. Thus, using Schauder estimates (for instance \cite[Theorem 4.31]{lieberman1996}), we obtain that $w = (d_u + \sigma v) u\in\Ccal^{2, \beta}(\overline{\Omega}\times[0, T])$. Let $1\ioe i \ioe d$, we have
  \begin{equation*}
    \partial_i w = (d_u + \sigma v) \partial_i u + \sigma u\partial_i v.
  \end{equation*}
  Thus $(d_u + \sigma v) \partial_i u\in \Ccal^{0, \beta}(\overline{\Omega}\times[0, T])$. Using that $d_u + \sigma v\soe d_u$, we see that $\partial_i u\in \Ccal^{0, \beta}(\overline{\Omega}\times[0, T])$. Then, let $1\ioe i,j  \ioe d$, we have
  \begin{equation*}
    \partial_{ij} w = (d_u + \sigma v)\partial_{ij} u + \sigma (\partial_i u\partial_j v + \partial_j u\partial_i v) + \sigma u\partial_{ij} v.
  \end{equation*}
  Thus, $(d_u + \sigma v)\partial_{ij} u\in \Ccal^{0, \beta}(\overline{\Omega}\times[0, T])$, and consequently $\partial_{ij} u\in \Ccal^{0, \beta}(\overline{\Omega}\times[0, T])$. Using \eqref{eq:SKT}, we see that $\partial_t u \in\Ccal^{0, \beta}(\overline\Omega)$, so $u\in\Ccal^{2, \beta}(\overline{\Omega}\times[0, T])$.
\end{proof}

\subsection{Part I: convergence in \texorpdfstring{$\fsL^\infty\fsL^2$ and $\fsL^2\fsH^1$}{LinftyL2 and L2H1}}
In this section we prove the 
\begin{lemm}\label{thm:conv1}
  Let $T>0$, $d\ioe 4$, $\Omega$ be a smooth  bounded domain of $\R^d$,
  $d_1,d_2,\sigma >0$, $r_u,r_v\ge 0$, $d_{ij} > 0$ for $1\ioe i, j \ioe 2$ and $d_A, d_B, h, k$ satisfying \eqref{eq:hk-relation}. Let $u_{A, \init}, u_{B, \init}, v_\init \soe 0$ belong to $\Ccal^{2, \alpha}(\overline\Omega)$ for some $\a > 0$ and compatible with the homogeneous Neumann boundary conditions. \medskip
  
 Let $u^\eps_A, u^\eps_B, v^\eps$ be the unique global strong solution (in $\Ccal^2(\R_+\times\overline\Omega)$) to \eqref{eq:SKT-eps} (with initial data $u_{A, \init}, u_{B, \init}, v_\init$), and $u, v$ be a solution to \eqref{eq:SKT} given by \cref{thm:conv} (with initial data $u_\init := u_{A, \init} + u_{B, \init}, v_\init$).\medskip

  Then there exists a constant $C_0$ that depends only on the parameters of \eqref{eq:SKT-eps}, $d, T, \Omega$ and the norm of $u_{A, \init}, u_{B, \init}, v_\init$ in $\Ccal^{2, \alpha}(\overline\Omega)$, but not on $\eps$, such that
  \begin{equation*}
    \|U^\eps\|_{\fsL^\infty([0, T], \fsL^2(\Omega))} + \|U^\eps\|_{\fsL^2([0, T], \fsH^1(\Omega))} \ioe C_0\eps^\frac12 ,
  \end{equation*}
  and 
  \begin{equation*}
    \|V^\eps\|_{\fsL^\infty([0, T], \fsL^2(\Omega))} + \|V^\eps\|_{\fsL^2([0, T], \fsH^1(\Omega))} \ioe C_0\eps^\frac12.
  \end{equation*}
\end{lemm}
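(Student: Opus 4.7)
The plan is to derive coupled energy estimates for $U^\eps$ and $V^\eps$ and close them via Gronwall, feeding in the gradient estimate \eqref{eq:estimateQ} on $Q^\eps$. Summing the first two equations of \eqref{eq:SKT-eps}, and using the decomposition $u_B^\eps = \frac{h(v^\eps)}{S}u^\eps + \frac{1}{S}Q^\eps$ together with \eqref{eq:hk-relation}, I obtain
\begin{equation*}
\partial_t u^\eps - \Delta\bigl[(d_u+\sigma v^\eps)u^\eps\bigr] = f_u(u^\eps,v^\eps)u^\eps + \frac{d_B}{S}\Delta Q^\eps.
\end{equation*}
Subtracting the limit equation gives $\partial_t U^\eps - \Delta W^\eps = F_u^\eps + \frac{d_B}{S}\Delta Q^\eps$ with
\begin{equation*}
W^\eps := (d_u+\sigma v^\eps)u^\eps - (d_u+\sigma v)u = (d_u+\sigma v^\eps)U^\eps + \sigma\, u\, V^\eps,
\end{equation*}
together with $\partial_t V^\eps - d_v\Delta V^\eps = F_v^\eps$. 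The Neumann boundary conditions $\nabla u^\eps\cdot n = \nabla u\cdot n = \nabla v^\eps\cdot n = \nabla v\cdot n = 0$ all hold (the two for $u$ and $u^\eps$ follow from \eqref{eq:SKTbc} combined with the vanishing of $\nabla v\cdot n$), and the initial data vanish, $U^\eps(0)\equiv V^\eps(0)\equiv 0$.

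I then test the $U^\eps$-equation against $U^\eps$ and the $V^\eps$-equation against $M\,V^\eps$, for a constant $M\ge 1$ to be tuned. Writing $\nabla W^\eps = (d_u+\sigma v^\eps)\nabla U^\eps + \sigma(U^\eps\nabla v^\eps + u\,\nabla V^\eps + V^\eps\nabla u)$ and integrating by parts yields the dissipation $d_u\|\nabla U^\eps\|_2^2 + M d_v\|\nabla V^\eps\|_2^2$ on the left-hand side. The cross term $\sigma\int_\Omega u\,\nabla V^\eps\cdot\nabla U^\eps$ is the central analytical issue; Young's inequality bounds it by $\frac{d_u}{4}\|\nabla U^\eps\|_2^2 + \frac{\sigma^2\|u\|_\infty^2}{d_u}\|\nabla V^\eps\|_2^2$, and the second piece is absorbed into $M d_v\|\nabla V^\eps\|_2^2$ by choosing $M>\sigma^2\|u\|_\infty^2/(d_u d_v)$, which is legitimate since $u\in\fsL^\infty(\Omega_T)$ by \cref{thm:improved_regu}. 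The two remaining first-order terms $\sigma\int U^\eps\nabla v^\eps\cdot\nabla U^\eps$ and $\sigma\int V^\eps\nabla u\cdot\nabla U^\eps$ are controlled by Young together with the uniform $\fsL^\infty$ bound on $\nabla v^\eps$ from \eqref{eq:initial-estimates-vinf} and on $\nabla u$ from \cref{thm:improved_regu}. Expanding $F_u^\eps = r_u U^\eps - d_{11}(u^\eps+u)U^\eps - d_{12}(v^\eps U^\eps + u V^\eps)$ and similarly for $F_v^\eps$, the quadratic contributions $-d_{11}\int (u^\eps+u)(U^\eps)^2$, $-d_{12}\int v^\eps (U^\eps)^2$, $-d_{21}\int u^\eps (V^\eps)^2$, $-d_{22}\int (v^\eps+v)(V^\eps)^2$ are sign-helpful and are discarded, while the leftover cross pieces $\int u V^\eps U^\eps$ and $\int v U^\eps V^\eps$ are bounded by $C(\|U^\eps\|_2^2+\|V^\eps\|_2^2)$ using $u,v\in\fsL^\infty$. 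The source term is handled by $\frac{d_B}{S}\bigl|\int_\Omega\nabla Q^\eps\cdot\nabla U^\eps\bigr|\le \frac{d_u}{4}\|\nabla U^\eps\|_2^2 + C\|\nabla Q^\eps\|_2^2$.

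Collecting these estimates produces a differential inequality of the form
\begin{equation*}
\frac{d}{dt}\bigl(\|U^\eps\|_2^2 + M\|V^\eps\|_2^2\bigr) + c\bigl(\|\nabla U^\eps\|_2^2 + \|\nabla V^\eps\|_2^2\bigr) \ioe C\bigl(\|U^\eps\|_2^2+\|V^\eps\|_2^2\bigr) + C\|\nabla Q^\eps\|_2^2,
\end{equation*}
and since $U^\eps(0)=V^\eps(0)=0$, Gronwall together with $\int_0^T\|\nabla Q^\eps\|_2^2\,dt\le C\eps$ (which is the rescaling of \eqref{eq:estimateQ} from \cref{thm:SKTeps-regu}) gives
\begin{equation*}
\|U^\eps\|_{\fsL^\infty([0,T],\fsL^2(\Omega))}^2+\|\nabla U^\eps\|_{\fsL^2(\Omega_T)}^2+\|V^\eps\|_{\fsL^\infty([0,T],\fsL^2(\Omega))}^2+\|\nabla V^\eps\|_{\fsL^2(\Omega_T)}^2 \ioe C\eps,
\end{equation*}
from which the announced $C_0\eps^{1/2}$ rates follow by taking square roots and using $\|U^\eps\|_{\fsL^2\fsL^2}^2\ioe T\|U^\eps\|_{\fsL^\infty \fsL^2}^2$. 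The main obstacle is precisely the cross term $\sigma u\,\nabla V^\eps\cdot\nabla U^\eps$: without any smallness assumption on $\sigma$ or $d_B$, the only way to handle it is the weighted energy with $M$ large, which is why the preliminary $\fsL^\infty$ regularity of the limit $u$ provided by \cref{thm:improved_regu} is essential here.
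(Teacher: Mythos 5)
Your proof is correct and follows essentially the same strategy as the paper: an $\fsL^2$ energy estimate on $U^\eps$ and $V^\eps$ closed by Gronwall, using $u,\nabla u\in\fsL^\infty$ from \cref{thm:improved_regu}, $\nabla v^\eps\in\fsL^\infty$ from \eqref{eq:initial-estimates-vinf}, and $\int_0^T\|\nabla Q^\eps\|_2^2\ioe C\eps$ from \eqref{eq:estimateQ}. Your decomposition $d_A U^\eps+d_B U_B^\eps=(d_u+\sigma v^\eps)U^\eps+\sigma uV^\eps+\frac{d_B}{S}Q^\eps$ is algebraically the same as the paper's splitting $U_B^\eps=(Q^\eps+H^\eps u+h(v^\eps)U^\eps)/S$ (your full dissipation $(d_u+\sigma v^\eps)|\nabla U^\eps|^2$ is the paper's $d_A|\nabla U^\eps|^2$ plus its good-sign term $I_4$), and your weight $M$ plays the role of the paper's factor $2C_1$ when summing the two estimates.
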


\begin{proof}
One can check that $u^\eps$ respects the following equation :
\begin{equation*}
  \left\{\begin{aligned}
     & \partial_t u^\epsilon - d_A \,\Delta_x u^\epsilon = d_B\Delta_x u_B^\eps + f_u(u^\eps, v^\eps)\,u^\epsilon, \\
     & \nabla_x u^\epsilon \cdot n = 0  \quad \text{on } [0, T]\times\partial\Omega.
  \end{aligned}
  \right.
\end{equation*}

Furthermore, using \eqref{eq:hk-relation} and the definition of $u_B$, we deduce that
\begin{equation*}
  \partial_t u = d_A \Delta u + d_B \Delta u_B + f_u(u, v)u.
\end{equation*}
Hence, one has the following equations (using the notations introduced in \eqref{def:maj})
  \begin{equation}\label{eq:U}
    \left\{
    \begin{aligned}
       & \partial_t U^\eps = \Delta(d_A U^\eps + d_BU_B^\eps) + F_u^\eps,        \\
       & \nabla_x U^\epsilon \cdot n = 0 \quad \text{on } [0, T]\times\partial\Omega,
    \end{aligned}
    \right.
  \end{equation}

  and
  \begin{equation}\label{eq:V}
    \left\{
    \begin{aligned}
       & \partial_t V^\eps = d_v\Delta V^\eps + F_v^\eps,\\
       & \nabla_x V^\epsilon \cdot n = 0 \quad \text{on } [0, T]\times\partial\Omega.
    \end{aligned}
    \right.
  \end{equation}

  Let $0 < t < T$, we first multiply \eqref{eq:U} by $U^\eps$ and integrate over $\Omega_t$ to obtain
  \begin{equation*}
    \int_\Omega U^\eps(t)^2 + d_A \int_0^t\int_\Omega |\nabla U^\eps|^2 \ioe \int_\Omega U^\eps(0)^2 - d_B \int_0^t\int_\Omega \nabla U_B^\eps\cdot \nabla U^\eps + \int_0^t \int_\Omega F_u^\eps U^\eps.
  \end{equation*}

  We recall that \cref{thm:improved_regu} shows that $u$ is bounded in $\fsL^\infty(\Omega_T)$, thus an immediate computation shows that
  \begin{align*}
    \int_0^t \int_\Omega F_u^\eps U^\eps
     & = r_u \int_0^t \int_\Omega |U^\eps|^2 - d_{11}\int_0^t \int_\Omega |U^\eps|^2(u + u^\eps)\\
     &- d_{12}\int_0^t \int_\Omega |U^\eps|^2v^\eps -d_{12} \int_0^t \int_\Omega U^\eps V^\eps u \\
     & \ioe  r_u \int_0^t \int_\Omega |U^\eps|^2-d_{12} \int_0^t \int_\Omega U^\eps V^\eps u \\
     & \lesssim (1 + \|u\|_{\fsL^\infty(\Omega_t)}) \left( \int_0^t \int_\Omega |U^\eps|^2 + \int_0^t \int_\Omega |V^\eps|^2\right).
  \end{align*}

  Then, one has
  \begin{equation*}
    U_B^\eps = \frac{Q^\eps + H^\eps u + h(v^\eps) U^\eps}{S}.
  \end{equation*}
  Indeed one can compute
  \begin{align*}
     & Q^\eps + H^\eps u + h(v^\eps) U^\eps                                                        \\
     & = k(v^\eps)u_B^\eps - h(v^\eps)u_A^\eps + h(v^\eps)u - h(v)u + h(v^\eps)u^\eps - h(v^\eps)u \\
     & = k(v^\eps)u_B^\eps - h(v^\eps)u_A^\eps  - h(v)u + h(v^\eps)(u_A^\eps + u_B^\eps) \\
     & = (h(v^\eps) + k(v^\eps))u_B^\eps  - h(v)u \\
     & = S(u^\eps_B - u_B).
  \end{align*}

  Hence, one obtains
  \begin{align*}
     & - d_B \int_0^t\int_\Omega \nabla U_B^\eps\cdot \nabla U^\eps\\
     & = - \frac{d_B}{S} \left(\int_0^t\int_\Omega \nabla Q^\eps\cdot\nabla U^\eps + u\nabla H^\eps \cdot\nabla U^\eps + H^\eps \nabla u \cdot \nabla U^\eps + h(v^\eps) |\nabla U^\eps|^2 + U^\eps \nabla U^\eps\cdot \nabla (h(v^\eps))\right) \\
     & =: I_1 + I_2 + I_3 + I_4 + I_5.
  \end{align*}

  We can bound all those terms using Young's inequality. Let $\eta > 0$ be a small parameter to be fixed later, then
  \begin{align*}
    |I_1|
     & \ioe \eta \int_0^t\int_\Omega |\nabla U^\eps|^2 + C_\eta\int_0^t\int_\Omega |\nabla Q^\eps|^2.
  \end{align*}

  For the next term, we recall \cref{thm:improved_regu}, in particular we have that $\|u\|_{\fsL^\infty(\Omega_T)} \ioe C$. Hence, we obtain
  \begin{align*}
    |I_2|
     & \ioe \eta  \int_0^t\int_\Omega |\nabla U^\eps|^2 + C_\eta\int_0^t\int_\Omega |\nabla H^\eps|^2.
  \end{align*}

  But
  $\nabla H^\eps = \nabla(h(v^\eps) - h(v)) =  h'(v^\eps)\nabla v^\eps  - h'(v)\nabla v = h'(v^\eps)\nabla V^\eps  - (h'(v^\eps) - h'(v^\eps))\nabla v$. But $\nabla v$ is bounded, and $h'$ is bounded and Lipschitz, thus one concludes that
  \begin{align*}
    |I_2| & \ioe \eta  \int_0^t\int_\Omega |\nabla U^\eps|^2 + C_\eta \int_0^t\int_\Omega |\nabla V^\eps|^2 + |V^\eps|^2.
  \end{align*}

  We recall that by \cref{thm:improved_regu}, we have $\nabla u \in\fsL^\infty(\Omega_T)$, thus for the same reasons as previously, one obtains
  \begin{align*}
    |I_3|
     & \ioe \eta \int_0^t\int_\Omega |\nabla U^\eps|^2 + C_\eta\int_0^t\int_\Omega |V^\eps|^2.
  \end{align*}

  Then
  $I_4 = - \frac{d_B}{S}\int_0^t\int_\Omega h(v^\eps)|\nabla U^\eps|^2 \ioe 0$ has a good sign.

  Finally, using that $\nabla (h(v^\eps)) = h'(v^\eps)\nabla v^\eps$ is bounded in $\fsL^\infty(\Omega_T)$ thanks to \cref{thm:DGNM-bound}, one obtains that
  \begin{align*}
    |I_5|
     & \ioe \eta \int_0^t\int_\Omega |\nabla U^\eps|^2 + C_\eta\int_0^t\int_\Omega |U^\eps|^2.
  \end{align*}

  If we collect all our estimates and after choosing $\eta$ small enough, we have shown that for some constant $C_1 > 0$:

  \begin{align}\label{eq:boundU1}
    &\int_\Omega U^\eps(t)^2 + \int_0^t\int_\Omega |\nabla U^\eps|^2\\
    &\ioe C_1\left(\int_\Omega U^\eps(0)^2 +   \int_0^t \int_\Omega |U^\eps|^2 + \int_0^t\int_\Omega |\nabla Q^\eps|^2 + \int_0^t\int_\Omega |\nabla V^\eps|^2 + \int_0^t\int_\Omega |V^\eps|^2\right).\nonumber
  \end{align}

  We now multiply \eqref{eq:V} by $V^\eps$ to obtain similarly that for some constant $C > 0$,
  \begin{equation}\label{eq:boundV1}
    \int_0^t V^\eps(t)^2 + \int_0^t\int_\Omega |\nabla V^\eps|^2 \ioe C \left(\int_0^t V^\eps(0)^2 + \int_0^t\int_\Omega |U^\eps|^2 + |V^\eps|^2\right).
  \end{equation}

  We now sum \eqref{eq:boundU1} and \eqref{eq:boundV1} (multiplied by $2C_1$), to obtain
  \begin{align}\label{eq:estimateUV}
     & \int_0^t U^\eps(t)^2 + \int_0^t\int_\Omega |\nabla U^\eps|^2 + \int_0^t V^\eps(t)^2 + \int_0^t\int_\Omega |\nabla V^\eps|^2                                                \\
     & \ioe C \left(\int_0^t U^\eps(0)^2 + \int_0^t\int_\Omega |U^\eps|^2 + \int_0^t V^\eps(0)^2 + \int_0^t\int_\Omega |V^\eps|^2 + \int_0^t\int_\Omega |\nabla Q^\eps|^2\right).
  \end{align}

  Using that $U^\eps(0, \cdot) = V^\eps(0, \cdot) = 0$ and estimate \eqref{eq:estimateQ} of \cref{thm:SKTeps-regu}, we obtain
  \begin{align*}
    \int_\Omega U^\eps(t)^2 +  \int_\Omega V^\eps(t)^2
     & \ioe C \left(\int_0^t\int_\Omega |U^\eps|^2+ \int_0^t\int_\Omega |V^\eps|^2 + \eps\right).
  \end{align*}

  Gronwall's inequality, together with \eqref{eq:estimateUV} enables to conclude that for $0 < t <T$, we have
  \begin{equation*}
     \int_\Omega U^\eps(t)^2 + \int_0^t\int_\Omega |\nabla U^\eps|^2 +  \int_\Omega V^\eps(t)^2 + \int_0^t\int_\Omega |\nabla V^\eps|^2 \ioe \eps C.
  \end{equation*}
\end{proof}

\subsection{Part 2: convergence in a subdomain in \texorpdfstring{$\fsL^\infty\fsH^l$ for all $l$}{LinftyHl for all l}}
We now prove the
\begin{lemm}\label{thm:conv2}
  Let $T>0$, $d\ioe 3$, $\Omega$ be a smooth  bounded domain of $\R^d$,
  $d_1,d_2,\sigma >0$, $r_u,r_v\ge 0$, $d_{ij} > 0$ for $1\ioe i, j \ioe 2$ and $d_A, d_B, h, k$ satisfying \eqref{eq:hk-relation}. We assume that $h, k$ are affine on $[0, \sup_{\eps > 0} \|v^\eps\|_{\fsL^\infty(\Omega_T)}]$. Let $l>0$ and $u_{A, \init}, u_{B, \init}, v_\init \soe 0$ be smooth functions on $\overline{\Omega}$ which are compatible with the homogeneous Neumann boundary conditions.\medskip
  
  Let $u^\eps_A, u^\eps_B, v^\eps$ be the unique global strong solution (in $\Ccal^2(\R_+\times\overline\Omega)$) to \eqref{eq:SKT-eps} (with initial data $u_{A, \init}, u_{B, \init}, v_\init$), and $u, v$ be a solution to \eqref{eq:SKT} given by \cref{thm:conv} (with initial data $u_\init := u_{A, \init} + u_{B, \init}, v_\init$).\medskip

  Let $\tilde\Omega \Subset \Omega$. We recall the definition of $\eps_{\init, l}^2 := \sup_{k\ioe l+1} \int_\Omega |\nabla^{\otimes k} Q^\eps(0)|^2$. Then there exists a constant $C_l$ that depends only on the parameters of \eqref{eq:SKT-eps}, $d, T, l, \tilde\Omega$ and the norm of the initial data $u_{A, \init}, u_{B, \init}, v_\init$  in a Sobolev space $\fsH^k(\Omega)$ for $k$ large enough, but not on $\eps$, such that
  \begin{equation*}
    \|U^\eps\|_{\fsL^\infty([0, T], \fsH^l(\tilde{\Omega}))} + \|V^\eps\|_{\fsL^\infty([0, T], \fsH^l(\tilde{\Omega}))}
    \ioe C_l(\eps_{\init, l}\eps^\frac12 + \eps).
  \end{equation*}
\end{lemm}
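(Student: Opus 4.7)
The strategy upgrades the proof of \cref{thm:conv1} by coupling a sharp Duhamel analysis of the residual $Q^\eps$ with an inductive energy estimate for higher derivatives of $U^\eps$ and $V^\eps$.

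The first step is a relaxation equation for $Q^\eps$. From the equations for $u_A^\eps$ and $u_B^\eps$ together with the identity $\tilde h^\eps + \tilde k^\eps = S$, a direct computation yields
\[
  \partial_t Q^\eps + \frac{S}{\eps}\, Q^\eps \;=\; \Phi^\eps,
\]
where
\[
  \Phi^\eps \;:=\; \tilde k^\eps (d_A + d_B)\Delta u_B^\eps - \tilde h^\eps d_A \Delta u_A^\eps + f_u(u^\eps, v^\eps)\, Q^\eps + \bigl(k'(v^\eps) u_B^\eps - h'(v^\eps) u_A^\eps\bigr)\, \partial_t v^\eps.
\]
Fix a smooth cutoff $\chi_1$ identically $1$ on a neighborhood of $\tilde\Omega$ and compactly supported in $\Omega$. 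For every multi-index $\alpha$ with $|\alpha| \le l+1$, $\chi_1 \partial^\alpha Q^\eps$ satisfies the same damped ODE in $t$ with forcing $\chi_1 \partial^\alpha \Phi^\eps$, so Duhamel's formula combined with Cauchy--Schwarz in the convolution gives
\[
  \int_0^T \lVert \chi_1 \partial^\alpha Q^\eps(t)\rVert_{\fsL^2}^2 \, dt \;\lesssim\; \eps \lVert \partial^\alpha Q^\eps(0)\rVert_{\fsL^2}^2 + \eps^2 \lVert \chi_1 \partial^\alpha \Phi^\eps \rVert_{\fsL^2\fsL^2}^2.
\]
The smoothness of the initial data allows one to invoke \cref{thm:SKTeps-regu} at arbitrarily high order, so the last factor is bounded uniformly in $\eps$ by a constant $C_l$, yielding $\lVert \chi_1 \partial^\alpha Q^\eps \rVert_{\fsL^2\fsL^2}^2 \le C_l (\eps\, \eps_{\init, l}^2 + \eps^2)$ for all $|\alpha| \le l+1$.

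The second step is an inductive energy estimate. Using the identity $d_A + d_B \tilde h^\eps/S = d_u + \sigma v^\eps$ from \eqref{eq:hk-relation} and the algebraic relation $U_B^\eps = (Q^\eps + H^\eps u + \tilde h^\eps U^\eps)/S$ derived in the proof of \cref{thm:conv1}, one recasts
\[
  \partial_t U^\eps \;=\; \Delta\bigl[ (d_u + \sigma v^\eps) U^\eps \bigr] + \frac{d_B}{S}\, \Delta(Q^\eps + H^\eps u) + F_u^\eps,
\]
and proceeds by induction on $l \ge 0$. Given $|\alpha| = l$, apply $\partial^\alpha$ to this equation and to $\partial_t V^\eps = d_v \Delta V^\eps + F_v^\eps$, localize via a second cutoff $\chi$ equal to $1$ on $\tilde\Omega$ with $\supp \chi \subset \{\chi_1 \equiv 1\}$, and test against $\chi^2 \partial^\alpha U^\eps$ and $\chi^2 \partial^\alpha V^\eps$. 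Integration by parts on the principal Laplacians produces the coercive dissipation controlling $\lVert \chi \nabla \partial^\alpha U^\eps \rVert_{\fsL^2}$ and $\lVert \chi \nabla \partial^\alpha V^\eps \rVert_{\fsL^2}$, which absorbs the $\nabla \partial^\alpha Q^\eps$ contribution (of order at most $l+1$, hence controlled by Step~1). The Leibniz expansions of $\partial^\alpha F_u^\eps$, $\partial^\alpha F_v^\eps$ and $\partial^\alpha(H^\eps u)$ split into two kinds of terms: top-order pieces where all $l$ derivatives fall on $U^\eps$ or $V^\eps$, which carry bounded coefficients (thanks to the uniform interior $\fsL^\infty$ bounds from \cref{thm:improved_regu} and \cref{thm:SKTeps-regu}) and close a coupled Gronwall loop on $\lVert \chi \partial^\alpha U^\eps\rVert_{\fsL^2}^2 + \lVert \chi \partial^\alpha V^\eps\rVert_{\fsL^2}^2$; and strictly lower-order pieces, which are $\fsL^2\fsL^2$-bounded by $C(\eps^{1/2}\eps_{\init, l} + \eps)$ via the inductive hypothesis at order strictly less than $l$.

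Summing the energy inequalities over $|\alpha| \le l$, using $U^\eps(0) = V^\eps(0) = 0$, and applying Gronwall's lemma yields $\lVert \chi \partial^\alpha U^\eps \rVert_{\fsL^\infty \fsL^2}^2 + \lVert \chi \partial^\alpha V^\eps \rVert_{\fsL^\infty \fsL^2}^2 \le C_l (\eps\, \eps_{\init, l}^2 + \eps^2)$, from which the stated bound follows since $\chi \equiv 1$ on $\tilde\Omega$. The main technical difficulty I expect is the careful bookkeeping of the Leibniz expansions of the nonlinearities at top order $|\alpha| = l$: in particular, handling $\partial^\alpha(H^\eps u)$ requires the affineness of $h, k$ on the range of $v^\eps$, which makes $H^\eps$ linear in $V^\eps$ and ensures that $\partial^\alpha V^\eps$ appears in the $U^\eps$-inequality only through a bounded coefficient (an analogous statement holds for $\partial^\alpha U^\eps$ in the $V^\eps$-inequality), so that the coupled Gronwall argument actually closes.
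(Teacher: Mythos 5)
Your two-step architecture (interior Duhamel/energy estimates for $\partial^\alpha Q^\eps$ at rate $\eps_{\init,l}^2\eps+\eps^2$, then an induction on $l$ via localized energy estimates using $U_B^\eps=(Q^\eps+H^\eps u+\tilde h^\eps U^\eps)/S$) is exactly the paper's, and those parts are sound. The genuine gap is the base case of your induction. You induct on $l\soe 0$ and dispose of the ``strictly lower-order pieces'' by ``the inductive hypothesis at order strictly less than $l$'', but at $l=0$ no such hypothesis exists, and \cref{thm:conv1} cannot serve as one: it only yields $\|U^\eps\|_{\fsL^2(\Omega_T)}+\|V^\eps\|_{\fsL^2(\Omega_T)}\ioe C\eps^{1/2}$, not the improved rate $C(\eps_{\init,-1}\eps^{1/2}+\eps)$ that your scheme needs (these differ precisely in the well-prepared regime $\eps_{\init}\to 0$, which is the whole point of the refined statement). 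Moreover your localized energy estimate cannot produce the base case by itself: the cutoff commutators (terms like $\int|\nabla\chi|^2|U^\eps|^2$ and $\int\nabla(\chi U^\eps)\cdot(\nabla\chi)U^\eps$) involve $\|U^\eps\|_{\fsL^2(\Omega_2\setminus\Omega_1)}$, a quantity at the \emph{same} derivative order on a strictly larger set, which Gronwall on $\|\chi U^\eps\|_{\fsL^2}^2$ cannot absorb; and running the estimate globally instead would require $\|\nabla Q^\eps\|^2_{\fsL^2(\Omega_T)}\lesssim\eps_{\init}^2\eps+\eps^2$ on all of $\Omega$, which is not available (the global regularity of \cref{thm:SKTeps-regu} does not reach three derivatives of $u_C^\eps$ up to the boundary).

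The paper fills this gap with a separate lemma (\cref{thm:convlemma}): one tests the equations for $U^\eps,V^\eps$ against $L^{-1}U^\eps, L^{-1}V^\eps$ with $L=\Delta - I$ (a negative-Sobolev/duality energy). This produces the dissipation $\int_0^t\int_\Omega|U^\eps|^2$ directly, so the cross-diffusion term only requires the undifferentiated bound $\frac{1}{\eps}\int_0^t\int_\Omega|Q^\eps|^2\ioe\eps_{\init,-1}^2+C\eps$ (which \emph{is} available globally), and it works on the whole domain so no cutoff commutators arise. You need this lemma, or an equivalent substitute, before your induction can start; everything from $l\soe 1$ onward in your proposal then goes through along the paper's lines.
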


We will prove this result by induction on $l$. However, for the initial case, we cannot use \cref{thm:conv1}, because we need an improved convergence rate $\eps_{\init, l}\eps^\frac12 + \eps$. That's why we start with an intermediate lemma, that study convergence in the space $\fsL^2(\Omega_T)$, with a rate $\eps_{\init, l}\eps^\frac12 + \eps$. As a by-product this lemma shows the last estimate in the whole space in \cref{thm:conv}.

\begin{lemm}\label{thm:convlemma}
  Let $T>0$, $d\ioe 4$, $\Omega$ be a smooth  bounded domain of $\R^d$,
  $d_1,d_2,\sigma >0$, $r_u,r_v\ge 0$, $d_{ij} > 0$ for $1\ioe i, j \ioe 2$ and $d_A, d_B, h, k$ satisfying \eqref{eq:hk-relation}. Let $u_{A, \init}, u_{B, \init}, v_\init \soe 0$ belong to $\Ccal^{2, \alpha}(\overline\Omega)$ for some $\a > 0$ and compatible with the homogeneous Neumann boundary conditions. \medskip
  
 Let $u^\eps_A, u^\eps_B, v^\eps$ be the unique global strong solution (in $\Ccal^2(\R_+\times\overline\Omega)$) to \eqref{eq:SKT-eps} (with initial data $u_{A, \init}, u_{B, \init}, v_\init$), and $u, v$ be a solution to \eqref{eq:SKT} given by \cref{thm:conv} (with initial data $u_\init := u_{A, \init} + u_{B, \init}, v_\init$). We recall the definition of $\eps^2_{\init, -1} := \int_\Omega |Q(0)|^2$. \medskip

  Then there exists a constant $C_0$ that depends only on the parameters of \eqref{eq:SKT-eps}, $d, T, \Omega$ and the norm of $u_{A, \init}, u_{B, \init}, v_\init$ in $\Ccal^{2, \alpha}(\overline\Omega)$, but not on $\eps$, such that
  \begin{equation*}
    \|U^\eps\|_{\fsL^2(\Omega_T)} + \|V^\eps\|_{\fsL^2(\Omega_T)} \ioe C_0(\eps_{\init, -1}\eps^\frac12 + \eps).
  \end{equation*}
\end{lemm}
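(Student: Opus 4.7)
The key point is that while \cref{thm:SKTeps-regu} only yields $\|\nabla Q^\eps\|_{\fsL^2(\Omega_T)}^2 = O(\eps)$, the quantity $Q^\eps$ itself satisfies a stiff relaxation equation
\begin{equation*}
\partial_t Q^\eps + \tfrac{S}{\eps}\, Q^\eps = R^\eps,
\end{equation*}
where $R^\eps := (k'(v^\eps)u_B^\eps - h'(v^\eps)u_A^\eps)\partial_t v^\eps + k(v^\eps)[(d_A+d_B)\Delta u_B^\eps + f_u u_B^\eps] - h(v^\eps)[d_A\Delta u_A^\eps + f_u u_A^\eps]$ is uniformly bounded in $\fsL^2(\Omega_T)$ thanks to the $\fsL^2\fsH^2$ estimate of \cref{thm:SKTeps-regu}. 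Testing against $Q^\eps$ and applying Young's inequality yields $\tfrac{d}{dt}\|Q^\eps\|_{\fsL^2}^2 + \tfrac{S}{\eps}\|Q^\eps\|_{\fsL^2}^2 \ioe \tfrac{\eps}{S}\|R^\eps\|_{\fsL^2}^2$; integrating this linear differential inequality on $[0,T]$ gives
\begin{equation*}
\|Q^\eps\|_{\fsL^2(\Omega_T)}^2 \ioe \tfrac{\eps}{S}\|Q^\eps(0)\|_{\fsL^2}^2 + \tfrac{\eps^2}{S^2}\|R^\eps\|_{\fsL^2(\Omega_T)}^2 \ioe C(\eps\,\eps_{\init,-1}^2 + \eps^2).
\end{equation*}

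\textbf{Duality for $U^\eps$.} Using $u_B^\eps = S^{-1}(h(v^\eps)u^\eps + Q^\eps)$ together with \eqref{eq:hk-relation}, the equation for $U^\eps$ rewrites as
\begin{equation*}
\partial_t U^\eps = \Delta\bigl[a^\eps U^\eps + \sigma u V^\eps + \tfrac{d_B}{S} Q^\eps\bigr] + b^\eps U^\eps - d_{12}u V^\eps,
\end{equation*}
with $a^\eps := d_u + \sigma v^\eps$, $b^\eps := r_u - d_{11}(u+u^\eps) - d_{12}v^\eps$, Neumann conditions on the bracketed quantity (obtained from the Neumann data for $u^\eps,u,v^\eps,v$) and $U^\eps(0)=0$. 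For each $t\in(0,T]$, I consider the backward adjoint problem
\begin{equation*}
-\partial_t\psi - a^\eps\Delta\psi - b^\eps\psi = U^\eps,\quad \psi(t,\cdot)=0,\quad \nabla\psi\cdot n=0.
\end{equation*}
Standard $\fsL^2$-maximal regularity (valid since $a^\eps$ is continuous in time and bounded above and below via \cref{thm:DGNM-bound}, and $b^\eps$ is a lower-order perturbation) gives $\|\Delta\psi\|_{\fsL^2(\Omega_t)} + \|\psi\|_{\fsL^\infty([0,t],\fsL^2)} \ioe C\|U^\eps\|_{\fsL^2(\Omega_t)}$. After integrating by parts in time (boundary terms vanishing because $U^\eps(0)=0$ and $\psi(t,\cdot)=0$) and in space (using the Neumann conditions), substituting the dual equation makes the contributions involving $a^\eps U^\eps\Delta\psi$ and $b^\eps U^\eps\psi$ cancel, leaving
\begin{equation*}
\|U^\eps\|_{\fsL^2(\Omega_t)}^2 = \int_0^t\!\int_\Omega\!\bigl[\sigma u V^\eps + \tfrac{d_B}{S}Q^\eps\bigr]\Delta\psi \;-\; d_{12}\int_0^t\!\int_\Omega\!u V^\eps\,\psi.
\end{equation*}
Since $u\in\fsL^\infty(\Omega_T)$ by \cref{thm:improved_regu}, combining with the maximal-regularity bound on $\psi$ gives $\|U^\eps\|_{\fsL^2(\Omega_t)} \ioe C\bigl(\|Q^\eps\|_{\fsL^2(\Omega_T)} + \|V^\eps\|_{\fsL^2(\Omega_t)}\bigr)$.

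\textbf{Closure and main obstacle.} Testing \eqref{eq:V} against $V^\eps$ and absorbing the term $d_{21}\int u^\eps(V^\eps)^2$ by Hölder and interpolation (using $\fsH^1\hookrightarrow\fsL^{4}$, valid in every dimension $d\ioe 4$) together with $V^\eps(0)=0$, a standard differential inequality and Gronwall's lemma yield $\|V^\eps(s)\|_{\fsL^2}^2 \ioe C\int_0^s\|U^\eps(r)\|_{\fsL^2}^2\,dr$, and hence $\|V^\eps\|_{\fsL^2(\Omega_t)}^2 \ioe C\int_0^t\|U^\eps\|_{\fsL^2(\Omega_s)}^2\,ds$. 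Injecting this into the duality bound gives $\|U^\eps\|_{\fsL^2(\Omega_t)}^2 \ioe 2C^2\|Q^\eps\|_{\fsL^2(\Omega_T)}^2 + C'\int_0^t\|U^\eps\|_{\fsL^2(\Omega_s)}^2\,ds$, and a second Gronwall application in $t$ provides $\|U^\eps\|_{\fsL^2(\Omega_T)}^2 \ioe C\|Q^\eps\|_{\fsL^2(\Omega_T)}^2 \ioe C(\eps\,\eps_{\init,-1}^2 + \eps^2)$; the matching bound for $V^\eps$ then follows at once. The heart of the argument is the duality step: one has to identify the adjoint operator $-\partial_t - a^\eps\Delta - b^\eps$ so that both the principal-part coefficient $a^\eps$ and the linear zeroth-order coefficient $b^\eps$ get annihilated in the test pairing, leaving only the genuinely small $Q^\eps$ and $V^\eps$ on the right-hand side. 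This is what upgrades the $\|\nabla Q^\eps\|_{\fsL^2}$-dependence of Lemma \ref{thm:conv1} (which would only yield the rate $O(\eps^{1/2})$) into a $\|Q^\eps\|_{\fsL^2}^2$-dependence, and it rests on $\fsL^2$-maximal regularity for a backward parabolic equation with the $\eps$-dependent, time-variable coefficient $a^\eps$, itself underpinned by the uniform-in-$\eps$ regularity of $v^\eps$ from \cref{thm:DGNM-bound}.
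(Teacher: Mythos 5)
Your proposal is correct in substance, and the first half (the stiff relaxation equation $\partial_t Q^\eps + \frac{S}{\eps}Q^\eps = R^\eps$ with $R^\eps$ bounded in $\fsL^2(\Omega_T)$ via the $\fsL^2\fsH^2$ bound, then testing against $Q^\eps$ to get $\|Q^\eps\|_{\fsL^2(\Omega_T)}^2\lesssim \eps_{\init,-1}^2\eps+\eps^2$) is exactly the paper's argument. For the second half you diverge: the paper (following Morgan--Soresina) works with $Y^\eps:=(\Delta-I)^{-1}U^\eps$ and $Z^\eps:=(\Delta-I)^{-1}V^\eps$, runs a forward energy estimate on $\|Y^\eps\|_{\fsH^1}^2+\|Z^\eps\|_{\fsH^1}^2$, and uses $U_B^\eps=S^{-1}(Q^\eps+H^\eps u+h(v^\eps)U^\eps)$ so that the cross-diffusion term produces the good-signed $-\int h(v^\eps)|U^\eps|^2$ plus terms controlled by $\|Q^\eps\|_{\fsL^2}$ and $\|V^\eps\|_{\fsL^2}$; you instead recombine $d_Au^\eps+d_Bu_B^\eps=(d_u+\sigma v^\eps)u^\eps+\frac{d_B}{S}Q^\eps$ and run the classical backward-adjoint duality of M.~Pierre. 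Both routes achieve the same thing --- only $\|Q^\eps\|_{\fsL^2}$, never $\|\nabla Q^\eps\|_{\fsL^2}$, appears on the right --- and your recombination via \eqref{eq:hk-relation} is arguably cleaner since the full elliptic operator $a^\eps\Delta$ is uniformly nondegenerate, whereas the paper keeps $d_A\Delta U^\eps$ as the principal part and absorbs $h(v^\eps)U^\eps$ separately. The paper's version is self-contained at the level of elementary energy estimates on the fixed operator $L^{-1}$, while yours requires solving an $\eps$-dependent backward problem for each $t$.

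Two points in your duality step deserve more care. First, the maximal-regularity bound $\|\Delta\psi\|_{\fsL^2}\lesssim\|U^\eps\|_{\fsL^2}$ should not be attributed to continuity of $a^\eps$ in time: for a merely measurable coefficient with $0<d_u\le a^\eps\le C$ it follows from the standard computation of multiplying $-\partial_t\psi-a^\eps\Delta\psi=f$ by $-\Delta\psi$, which gives $\int a^\eps|\Delta\psi|^2\le\int f^2/a^\eps$ uniformly in $\eps$; this is the correct (and necessary) justification. Second, $b^\eps=r_u-d_{11}(u+u^\eps)-d_{12}v^\eps$ is \emph{not} uniformly bounded in $\fsL^\infty(\Omega_T)$, since \cref{thm:DGNM-bound} only gives $u^\eps\in\fsL^\infty([0,T],\fsL^q(\Omega))$ for finite $q$; calling it ``a lower-order perturbation'' therefore needs the absorption $|\int b^\eps\psi\Delta\psi|\le\|b^\eps\|_{\fsL^\infty_t\fsL^4_x}\|\psi\|_{\fsL^4}\|\Delta\psi\|_{\fsL^2}$ together with $\fsH^1\hookrightarrow\fsL^4$ (valid for $d\le4$) and a backward Gronwall, or alternatively $b^\eps U^\eps$ should be kept as a source term. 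These are repairable gaps of justification, not flaws in the strategy.
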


The proof of this lemma follows the lines of \cite{morgansoresina2026}.

\begin{proof}
  We start by improving the results of \cref{thm:SKTeps-regu}, concerning $Q$. We compute
  \begin{align*}
  \partial_t Q^\eps
     & = \partial_t \tilde k^\eps u_B^\eps + \tilde k^\eps \partial_t u_B^\eps - \partial_t \tilde h^\eps u_A^\eps - \tilde h^\eps \partial_t u_A^\eps\\
    & = \partial_t \tilde k^\eps u_B^\eps +  (d_A + d_B) \tilde k^\eps \Delta  u_B^\eps +\tilde k^\eps(f_u^\eps u_B^\eps - \frac{1}{\eps}Q^\eps)\\
    &- \partial_t \tilde h^\eps u_A^\eps - d_A\tilde h^\eps \Delta u_A^\eps - \tilde h^\eps (f_u^\eps u_A^\eps + \frac{1}{\eps}Q^\eps)\\
     & =: - \frac{S}{\eps}Q^\eps + R^\eps,
  \end{align*}
  where $R^\eps := \partial_t \tilde k^\eps u_B^\eps +  (d_A + d_B) \tilde k^\eps \Delta  u_B^\eps +\tilde k^\eps f_u^\eps u_B^\eps - \partial_t \tilde h^\eps u_A^\eps - d_A\tilde h^\eps \Delta u_A^\eps - \tilde h^\eps f_u^\eps u_A^\eps $. We deduce thanks to \eqref{eq:initial-estimates-uLq}-\eqref{eq:initial-estimates-vinf} and \cref{thm:regu1} (to bound the terms with $\Delta u_C^\eps$) that $R^\eps$ is bounded in $\fsL^{2}(\Omega_T)$. \medskip

  We multiply this equation by $Q^\eps$ and integrate over $\Omega_t$, to deduce that
  \begin{align*}
    \frac12\int_\Omega |Q^\eps(t)|^2 + \frac{S}{\eps}\int_0^t\int_\Omega |Q^\eps|^2
    &=\frac12 \int_\Omega |Q^\eps(0)|^2 + \int_0^t\int_\Omega Q^\eps R^\eps\\
    &\ioe \frac12\eps^2_{\init, -1} + \frac{S}{2\eps} \int_0^t\int_\Omega |Q^\eps|^2 + \eps C \int_0^t\int_\Omega |R^\eps|^2.
  \end{align*}

  We deduce that
  \begin{align}\label{eq:boundQ}
    \frac{S}{\eps}\int_0^t\int_\Omega |Q^\eps|^2 \ioe  \eps^2_{\init, -1} + \eps C.
  \end{align}

  We can now bound $U^\eps, V^\eps$. Let $\eta > 0$ be a small parameter to be fixed later. We consider the operator $L := \Delta - I$, and we define the operator $L^{-1}$, that maps any $f\in\fsL^2(\Omega)$ to the unique solution to
  \begin{equation*}
    \begin{cases}
      L(L^{-1} f) = f\quad\text{in}\quad \Omega,\\
      \nabla (L^{-1} f)\cdot n = 0 \quad \text{on}\quad \partial\Omega.
    \end{cases}    
  \end{equation*}
By classical elliptic theory, this operator is self-adjoint and well-defined from $\fsL^2(\Omega)$ to $\fsH^{2}(\Omega)$. \medskip
  
  Let $Y^\eps := L^{-1} U^\eps$, $Z^\eps := L^{-1} V^\eps$, by \eqref{eq:U} and \eqref{eq:V}, we deduce
  \begin{equation}\label{eq:Y}
    \left\{
    \begin{aligned}
       & \partial_t Y^\eps = (I + L^{-1})(d_A U^\eps + d_BU_{B}^\eps) + L^{-1}F_{u, }^\eps,        \\
       & \nabla_x Y^\epsilon \cdot n = 0 \quad \text{on } [0, T]\times\partial\Omega,
    \end{aligned}
    \right.
  \end{equation}
    and
  \begin{equation}\label{eq:Z}
    \left\{
    \begin{aligned}
       & \partial_t Z^\eps = d_v(I + L^{-1}) V^\eps + L^{-1}F_{v}^\eps,\\
       & \nabla_x Z^\epsilon \cdot n = 0 \quad \text{on } [0, T]\times\partial\Omega.
    \end{aligned}
    \right.
  \end{equation}

  We can now multiply \eqref{eq:Y} by $- U^\eps = - L Y^\eps$ and integrate over $[0, t]$ to deduce that
  \begin{align*}
    &\frac12\int_\Omega |Y^\eps(t)|^2 +\frac12 \int_\Omega |\nabla Y^\eps(t)|^2
    + d_A\int_0^t \int_\Omega |U^\eps|^2\\
    &\ioe \frac12\int_\Omega |Y^\eps(0)|^2 + \frac12\int_\Omega |\nabla Y^\eps(0)|^2 - d_A \int_0^t \int_\Omega U^\eps Y^\eps - \int_0^t \int_\Omega F_u^\eps Y^\eps - d_B\int_0^t \int_\Omega (U_B^\eps U^\eps + U_{B}^\eps Y^\eps).
  \end{align*}

  First of all, we can bound thanks to Young's inequality
  \begin{equation*}
    - \int_0^t \int_\Omega U^\eps Y^\eps \ioe \eta \int_0^t \int_\Omega |U^\eps|^2 + C_\eta\int_0^t \int_\Omega |Y^\eps|^2.
  \end{equation*}

  We then recall that
  \begin{align*}
    F_u^\eps
    &= r_u U^\eps - d_{11}U^\eps (u + u^\eps) - d_{12}U^\eps v^\eps - d_{12}V^\eps u.
  \end{align*}

  Hence, using that $u, v^\eps$ are bounded in $\fsL^\infty(\Omega_T)$ (thanks to \cref{thm:improved_regu} and \cref{thm:DGNM-bound}), we obtain thanks to Young's inequality that
  \begin{align*}
    &- \int_0^t \int_\Omega F_u^\eps Y^\eps\\
    &\ioe \eta  \int_0^t \int_\Omega (|U^\eps|^2 + |V^\eps|^2) + C_\eta \int_0^t \int_\Omega |Y^\eps|^2 + d_{11}\int_0^t \int_\Omega u^\eps U^\eps Y^\eps.
  \end{align*}

  But by Hölder's inequality, we first have
  \begin{equation*}
    \int_0^t \int_\Omega u^\eps U^\eps Y^\eps
    \ioe \|u^\eps\|_{\fsL^\infty([0, T], \fsL^q(\Omega))}\|U^\eps\|_{\fsL^2([0, T], \fsL^2(\Omega))}\|Y^\eps\|_{\fsL^2([0, T], \fsL^{2^*}(\Omega))},
  \end{equation*}
  where $2^* := \frac{2d}{d-2}$ is the Sobolev exponent for $d=3$ (and any $p>2$ large enough in dimension $d=1, 2$) and $2 < q < + \infty$ is such that $\frac{1}{q} + \frac{1}{2} + \frac{1}{2^*} = 1$. By \cref{thm:SKTeps-regu}, $u^\eps$ is bounded in $\fsL^\infty([0, T], \fsL^q(\Omega))$, thus we obtain thanks to Young and Sobolev inequalities
  \begin{align*}
    \int_0^t \int_\Omega u^\eps U^\eps Y^\eps
    &\lesssim \eta\|U^\eps\|_{\fsL^2([0, T], \fsL^2(\Omega))}^2 + C_\eta\left(\| Y^\eps\|^2_{\fsL^2([0, T], \fsL^2(\Omega))} + \|\nabla Y^\eps\|^2_{\fsL^2([0, T], \fsL^2(\Omega))}\right).
  \end{align*}

  Hence, we have shown that
  \begin{align*}
    - \int_0^t \int_\Omega F_u^\eps Y^\eps
    &\ioe \eta  \int_0^t \int_\Omega (|U^\eps|^2 + |V^\eps|^2) + C_\eta \int_0^t \int_\Omega \left(| Y^\eps|^2 + |\nabla Y^\eps|^2\right).
  \end{align*}

  Then, we recall that
  \begin{equation*}
    U_B^\eps = \frac{Q^\eps + H^\eps u + h(v^\eps) U^\eps}{S},
  \end{equation*}
  thus, using the bound $u\in\fsL^\infty(\Omega_T)$ from \cref{thm:improved_regu}, we get
  \begin{align*}
    - \int_0^t \int_\Omega U_{B}^\eps U^\eps
    &= - \frac{1}{S}\int_0^t \int_\Omega (Q^\eps + H^\eps u + h(v^\eps) U^\eps) U^\eps\\
    &\ioe \eta \int_0^t \int_\Omega |U^\eps|^2 + C_\eta \int_0^t \int_\Omega \left(|Q^\eps|^2 + |H^\eps|^2\right)- \frac{1}{S}\int_0^t \int_\Omega h(v^\eps) |U^\eps|^2.
  \end{align*}

  We note that $- \int_0^t \int_\Omega h(v^\eps) |U^\eps|^2 \ioe 0$ and since $h$ is Lipschitz, we also have $\int_0^t \int_\Omega |H^\eps|^2\lesssim \int_0^t \int_\Omega |V^\eps|^2$, thus we obtain that

  \begin{align*}
    - \int_0^t \int_\Omega U_{B}^\eps U^\eps
    &\ioe \eta \int_0^t \int_\Omega |U^\eps|^2 + C_\eta \int_0^t \int_\Omega \left(|Q^\eps|^2 + |V^\eps|^2\right).
  \end{align*}

  We can similarly bound
  \begin{align*}
    - \int_0^t \int_\Omega U_{B}^\eps Y^\eps
    &\ioe \eta \int_0^t \int_\Omega |U^\eps|^2 + C \int_0^t \int_\Omega \left(|Q^\eps|^2 + |V^\eps|^2\right) + C_\eta \int_0^t \int_\Omega |Y^\eps|^2.
  \end{align*}

  Collecting all our estimates and using one last time Young inequality, we have shown that up to choosing $\eta$ small enough

  \begin{align}\label{eq:finalY}
    &\frac12\int_\Omega |Y^\eps(t)|^2 + \frac12\int_\Omega |\nabla Y^\eps(t)|^2
    + \frac{d_A}{2}\int_0^t \int_\Omega |U^\eps|^2\\
    &\ioe \frac12\int_\Omega |Y^\eps(0)|^2 + \frac12\int_\Omega |\nabla Y^\eps(0)|^2\nonumber\\
    &+ C_1\int_0^t \int_\Omega(|Y^\eps|^2 + |\nabla Y^\eps|^2 + |Q^\eps|^2 + |V^\eps|^2).\nonumber
  \end{align}

  We can now multiply \eqref{eq:Z} by $-V^\eps = - L Z^\eps$ and integrate over $[0, t]$ to obtain
  \begin{align*}
    &\frac12\int_\Omega |Z^\eps(t)|^2 + \frac12\int_\Omega  |\nabla Z^\eps(t)|^2
    + d_v\int_0^t \int_\Omega |V^\eps|^2\\
    &\ioe \frac12\int_\Omega |Z^\eps(0)|^2 + \frac12\int_\Omega  |\nabla Z^\eps(0)|^2 - d_v \int_0^t \int_\Omega V^\eps Z^\eps - \int_0^t \int_\Omega F_v^\eps Z^\eps.
  \end{align*}

  Thanks to similar computations as previously, one can bound
  \begin{align*}
    - \int_0^t \int_\Omega F_{v}^\eps Z^\eps
    &\ioe \eta \int_0^t \int_\Omega (|U^\eps|^2 + |V^\eps|^2) + C_\eta \int_0^t \int_\Omega \left(|Z^\eps|^2 + |\nabla Z^\eps|^2\right).
  \end{align*}

  Finally, we obtain
  \begin{align}\label{eq:finalZ}
    &\frac12\int_\Omega |Z^\eps(t)|^2 + \frac12\int_\Omega |\nabla Z^\eps(t)|^2
    + d_v\int_0^t \int_\Omega |V^\eps|^2\\
    &\ioe\frac12 \int_\Omega |Z^\eps(0)|^2 + \frac12\int_\Omega |\nabla Z^\eps(0)|^2
    + \eta \int_0^t \int_\Omega \left(|V^\eps|^2 + |U^\eps|^2\right)\nonumber\\
    &+ C_\eta\int_0^t \int_\Omega(|Z^\eps|^2 + |\nabla Z^\eps|^2).\nonumber
  \end{align}

  Then, combining \eqref{eq:finalY} and \eqref{eq:finalZ} (after choosing $\eta$ small enough in \eqref{eq:finalZ}), we get
  \begin{align*}
    &\int_\Omega \left(|Z^\eps(t)|^2 + |\nabla Z^\eps(t)|^2 + |Y^\eps(t)|^2 + |\nabla Y^\eps(t)|^2\right)
    + \int_0^t \int_\Omega \left(|V^\eps|^2 + |U^\eps|^2\right)\\
    &\lesssim \int_\Omega \left(|Z^\eps(0)|^2 + |\nabla Z^\eps(0)|^2 + |Y^\eps(0)|^2 + \nabla |Y^\eps(0)|^2\right)\\
    &+ \int_0^t \int_\Omega\left(|Z^\eps|^2 + |\nabla Z^\eps|^2 + |Y^\eps|^2 + |\nabla Y^\eps|^2\right)\\
    &+ \int_0^t \int_\Omega|Q^\eps|^2.
  \end{align*}

  Since $U^\eps(0, \cdot) = V^\eps(0, \cdot) = 0 $, we deduce that $Y^\eps(0, \cdot) = Z^\eps(0, \cdot) = 0$, and using \eqref{eq:boundQ}, we deduce that
    \begin{align*}
    &\int_\Omega \left(|Z^\eps(t)|^2 + |\nabla Z^\eps(t)|^2 + |Y^\eps(t)|^2 + |\nabla Y^\eps(t)|^2\right)
    + \int_0^t \int_\Omega \left(|V^\eps|^2 + |U^\eps|^2\right)\\
    &\lesssim  \int_0^t \int_\Omega\left(|Z^\eps|^2 + |\nabla Z^\eps|^2 + |Y^\eps|^2 + |\nabla Y^\eps|^2\right)\\
    &+ \eps_{\init, -1}^2\eps + \eps^2.
  \end{align*}

  Using Gronwall's inequality, we derive the sought conclusion.

\end{proof}

\begin{proof}[Proof of \cref{thm:conv2}]
  The proof contains two parts. We first improve (with respect to $\eps_{\init, l}$) the results of \cref{thm:SKTeps-regu} on $\int_0^t\int_\Omega |\chi\nabla^{\otimes l} Q|^2$. We then use this refined estimate similarly as in the proof of \cref{thm:conv1}.\\

  We first claim that the solutions of \eqref{eq:SKT} constructed in \cref{thm:globalSKT} enjoy enough space smoothness. It is possible to provide an independent proof without using a compactness argument, but for our purpose, using the uniform boundedness of $u_C^\eps$ and $v^\eps$ in $\fsL^\infty([0, T], \fsH^{k}(\tilde{\Omega}))$ for any $\tilde\Omega\Subset\Omega$ and all $k\in\N$ proved in \cref{thm:SKTeps-regu} and the convergence result proved in \cref{thm:conv1}, we can immediately deduce that $u,v$ are bounded in  $\fsL^\infty([0, T], \fsH^{k}(\tilde{\Omega}))$ for all $k\in\N$.\medskip

  Let $0 < t < T$ and let $\tilde{\Omega} \Subset \hat{\Omega} \Subset \Omega$. Let $\chi$ be a cutoff function between $\tilde{\Omega}$ and $\hat\Omega$.

  We recall that
  \begin{align}\label{eq:dtQ}
    \partial_t Q^\eps
     & = \partial_t \tilde k^\eps u_B^\eps + \tilde k^\eps \partial_t u_B^\eps - \partial_t \tilde h^\eps u_A^\eps - \tilde h^\eps \partial_t u_A^\eps\\
     & = \partial_t v^\eps k'(v^\eps) u_B^\eps + \tilde k^\eps ((d_A + d_B)\Delta u_B^\eps + f_u^\eps u_B^\eps- \frac{1}{\eps}Q^\eps)\nonumber\\
     &- \partial_t v^\eps h'(v^\eps) u_A^\eps - \tilde h^\eps (d_A\Delta u_A^\eps + f_u^\eps u_A^\eps + \frac{1}{\eps}Q^\eps)\nonumber\\
     & =: - \frac{S}{\eps}Q^\eps + R^\eps.\nonumber
  \end{align}
  By \cref{thm:SKTeps-regu}, $R^\eps$ is bounded (uniformly in $\eps$) in $\fsL^{2}([0, T], \fsH^{k}(\hat\Omega))$ for any $k\in\N$ (for the terms with $\partial_t v^\eps$ we use the equation \eqref{eq:SKT-eps}).

  Let $\beta \in\N^d$, we can compute
  \begin{align*}
    \frac{\dd}{\dd t}\int_\Omega \chi^2 |\partial^\beta Q^\eps|^2
     & = 2\int_\Omega \chi^2 \partial^\beta Q^\eps \partial^\beta(- \frac{S}{\eps}Q^\eps + R^\eps).
  \end{align*}

  Hence, by integrating the previous estimate (and using Young's inequality)
  \begin{align*}
    &\int_\Omega \chi^2 |\partial^\beta Q^\eps|^2(t) + 2\int_0^t\int_\Omega \chi^2 \frac S\eps|\partial^\beta Q^\eps|^2\\
     & \ioe \int_\Omega \chi^2 |\partial^\beta Q^\eps|^2(0) + 2\int_0^t\int_\Omega \chi^2 \partial^\beta Q^\eps\partial^\beta R^\eps\\
     & \ioe \int_\Omega \chi^2 |\partial^\beta Q^\eps|^2(0) +\frac{S}{2\eps}\int_0^t\int_\Omega |\chi^2 \partial^\beta Q^\eps|^2 + C\eps \int_0^t\int_\Omega |\chi\partial^\beta R^\eps|^2.
  \end{align*}

  Since $\int_0^t\int_\Omega |\chi\partial^\beta R^\eps|^2\ioe C$, we deduce that
  \begin{equation*}
    \frac{1}{\eps}\int_0^t\int_\Omega \chi^2 |\partial^\beta Q^\eps|^2
    \ioe C(\eps_{\init, |\beta| - 1}^2 + \eps).
  \end{equation*}

  In particular, we deduce that for all $\tilde\Omega\Subset\Omega$
  \begin{equation}\label{eq:Qconv}
    \int_0^t\int_{\tilde\Omega}|\partial^\beta Q^\eps|^2
    \ioe C(\eps_{\init, |\beta| - 1}^2\eps + \eps^2).
  \end{equation}

  We can now mimic the proof of \cref{thm:conv1}. More precisely, we will prove by induction that for any $l\soe 0$ and $\Omega_1\Subset \Omega$, there exists some constant $C$ that depends only on $T, d, l, \Omega_1$, the parameters of \eqref{eq:SKT-eps} and the norm of the initial data $u_{A, \init}, u_{B, \init}, v_\init$ in some $\fsH^k(\Omega)$ for $k$ large enough such that
  \begin{equation}\label{eq:HRconv1}
    \int_0^T\int_\Omega |\nabla^{\otimes (l+1)} U^\eps|^2 + \int_0^T\int_\Omega |\nabla^{\otimes (l+1)} V^\eps|^2 \ioe (\eps_{\init, l}^2\eps + \eps^2) C.
  \end{equation}

  In the proof of this result, we will also deduce that for $0 < t <T$
  \begin{equation}\label{eq:HRconv1infinity}
    \int_{\Omega_1} |\nabla^{\otimes l} U^\eps(t)|^2  + \int_{\Omega_1} |\nabla^{\otimes l}V^\eps(t)|^2 \ioe (\eps_{\init, l}^2\eps + \eps^2) C.
  \end{equation}

  The initial case $l=-1$ is provided by \cref{thm:convlemma}. For $l\soe 0$, we assume that the induction hypothesis holds for some $l - 1\soe 0$ and we prove it for $l$, hence we assume that
  \begin{equation}\label{eq:HRconv}
    \int_0^T\int_{\Omega_1} |\nabla^{\otimes l} U^\eps|^2 + \int_0^T\int_{\Omega_1} |\nabla^{\otimes l} V^\eps|^2 \ioe (\eps_{\init, l}^2\eps + \eps^2) C.
  \end{equation}

  Since we assumed that $h, k$ are affine on $[0, \sup_{\eps > 0} \|v^\eps\|_{\fsL^\infty(\Omega_T)}]$, we won't make any difference between $H^\eps, K^\eps$ and $V^\eps$ in the estimates. \medskip

  Using \eqref{eq:U} and \eqref{eq:V}, we deduce that
  \begin{equation}\label{eq:a}
    \left\{
    \begin{aligned}
       & \partial_t \partial^\a U^\eps = \partial^\a\Delta(d_A U^\eps + d_BU_B^\eps) + \partial^\a F_u^\eps, \\
       & \partial_t  \partial^\a V^\eps =  d_v \partial^\a \Delta V^\eps +  \partial^\a F_v^\eps.
    \end{aligned}
    \right.
  \end{equation}

  Let $0 <t <T$, $\Omega_1\Subset\Omega_2\Subset\Omega$ and let $\chi$ be a smooth cutoff function between $\Omega_1$ and $\Omega_2$. Let $\a$ be such that $|\a| = l$, we multiply \eqref{eq:a} by $\chi^2 \partial^\a U$ and use the fact that
  \begin{align*}
    - \int_\Omega \chi^2 \partial^\a U^\eps \partial^\a \Delta U^\eps
     & = \int_\Omega \left(\nabla(\chi\partial^\a U^\eps) + (\nabla\chi) \partial^\a U^\eps\right)\left(\nabla (\chi \partial^\a U^\eps) - (\nabla\chi)\partial^\a U^\eps\right) \\
     & = \int_\Omega |\nabla(\chi\partial^\a U^\eps)|^2\\
    &- \int_\Omega \nabla(\chi \partial^\a U^\eps)(\nabla\chi)\partial^\a U^\eps + \int_\Omega (\nabla\chi)\partial^\a U^\eps\left(\nabla (\chi \partial^\a U^\eps) - (\nabla\chi)\partial^\a U^\eps\right),
  \end{align*}

  to obtain
  \begin{align*}
     &\frac12 \int_\Omega  |\chi\partial^\a U^\eps|^2(t) + d_A \int_0^T\int_\Omega |\nabla(\chi\partial^\a U^\eps)|^2 \\
     &\ioe\frac12\int_\Omega|\chi\partial^\a U^\eps|^2(0) + d_A\int_0^T\int_\Omega \nabla(\chi \partial^\a U^\eps)(\nabla\chi)\partial^\a U^\eps\\
     &- d_A\int_0^T\int_\Omega (\nabla\chi)\partial^\a U^\eps\left(\nabla (\chi \partial^\a U^\eps) - (\nabla\chi)\partial^\a U^\eps\right) \\
     & + d_B \int_0^T\int_\Omega \chi^2\partial^\a U^\eps \Delta\partial^\a U_B^\eps + \int_0^T \int_\Omega \partial^\a F_u^\eps \chi^2 \partial^\a U^\eps \\
     & =: \frac12\int_\Omega|\chi\partial^\a U^\eps|^2(0) + I_1 + I_2 + I_3 + I_4.
  \end{align*}

  We will now bound all the terms appearing in this sum. In the rest of the proof, $\eta$ is a small parameter to be chosen later.
  We start with $I_1$:
  \begin{align*}
     I_1
     & = d_A\int_0^T\int_\Omega |\nabla(\chi \partial^\a U^\eps)(\nabla\chi)\partial^\a U^\eps|\\
     & \ioe \eta \int_0^T\int_\Omega |\nabla(\chi \partial^\a U^\eps)|^2 + C_\eta \int_0^T\int_\Omega |\nabla\chi|^2|\partial^\a U^\eps|^2 \\
     & \ioe \eta\int_0^T\int_\Omega |\nabla(\chi \partial^\a U^\eps)|^2 + C_\eta(\eps_{\init, l}^2\eps + \eps^2),
  \end{align*}
  the last line comes from the induction hypothesis (applied on $\Omega_2$).

  We use the same kind of computations to bound $I_2$
  \begin{align*}
    I_2
     & \ioe \eta\int_0^T\int_\Omega |\nabla(\chi \partial^\a U^\eps)|^2 + C_\eta(\eps_{\init, l}^2\eps + \eps^2).
  \end{align*}

  We now consider $I_3 = d_B \int_0^T\int_\Omega \chi^2\partial^\a U^\eps \Delta\partial^\a U_B^\eps$. As previously we start by rewriting
  \begin{align*}
    \int_0^t \int_\Omega \chi^2\partial^\a U^\eps \Delta\partial^\a U_B^\eps
     & = - \int_0^t\int_\Omega \nabla(\chi\partial^\a U^\eps)\nabla(\chi\partial^\a U^\eps_B)
    -\int_0^t\int_\Omega (\nabla\chi)\partial^\a U^\eps\nabla(\chi\partial^\a U^\eps_B)       \\
     & +\int_0^t\int_\Omega \nabla(\chi\partial^\a U^\eps)(\nabla\chi)\partial^\a U^\eps_B
    +\int_0^t\int_\Omega (\nabla\chi)\partial^\a U^\eps(\nabla\chi)\partial^\a U^\eps_B       \\
     & =: I_{31} + I_{32} + I_{33} + I_{34}.
  \end{align*}

  Hence, by the induction hypothesis \eqref{eq:HRconv} (on $\Omega_2$), we first obtain
  \begin{align*}
    I_{32}
    &\ioe \eta \|\nabla(\chi\partial^\a U^\eps_B)\|_{\fsL^2(\Omega\times[0, T])}^2 + C_\eta \|(\nabla\chi)\partial^\a U^\eps\|_{\fsL^2(\Omega\times[0, T])}^2\\
    &\ioe \eta \|\nabla(\chi\partial^\a U^\eps_B)\|_{\fsL^2(\Omega\times[0, T])}^2 + C_\eta (\eps_{\init, l}^2\eps + \eps^2).
  \end{align*}
  Similarly,
  \begin{align*}
    I_{33} \ioe \eta \|\nabla(\chi\partial^\a U^\eps)\|_{\fsL^2(\Omega_2\times[0, T])}^2 +C_\eta \|\partial^\a U^\eps_B\|_{\fsL^2(\Omega_2\times[0, T])}^2.
  \end{align*}
  We also obtain
  \begin{align*}
    I_{34} \lesssim  \|\partial^\a U^\eps_B\|_{\fsL^2(\Omega_2\times[0, T])}^2 +\eps_{\init, l}^2\eps + \eps^2.
  \end{align*}

  We recall that
  \begin{equation*}
    U_B^\eps = \frac{Q^\eps + H^\eps u + h(v^\eps) U^\eps}{S}.
  \end{equation*}
  We notice that
  \begin{equation}\label{eq:Ku}
    \partial^\alpha(H^\eps u)= \sum_{\beta + \gamma = \alpha}\binom{\a}{\beta}\partial^\beta H^\eps \partial^\gamma u.
  \end{equation}
  Since $\partial^\gamma u \in\fsL^\infty([0, T], \fsH^2(\Omega_2))$, we deduce by a Sobolev injection that $\partial^\gamma u\in\fsL^\infty(\Omega_2\times[0, T])$. Hence,
  $$\|\partial^\beta H^\eps \partial^\gamma u \|_{\fsL^2(\Omega_2\times[0, T])}\ioe C\|\partial^\beta H^\eps \|_{\fsL^2(\Omega_2\times[0, T])}\ioe C(\eps_{\init, l}\eps^{\frac 12} + \eps),$$
  by the induction hypothesis. \medskip

  Using the space-smoothness of $v^\eps$, one can similarly show that
  $$\|\partial^\a (U^\eps \tilde{h}^\eps) \|_{\fsL^2(\Omega_2\times[0, T])} \ioe C\sum_{\beta \ioe \a}\|\partial^\beta U^\eps \|_{\fsL^2(\Omega_2\times[0, T])}\ioe C(\eps_{\init, l}\eps^{\frac 12} + \eps).$$
  Thus, we can use \eqref{eq:Qconv} to deduce that
  \begin{equation}\label{eq:UBl}
    \|\partial^\a U^\eps_B\|_{\fsL^2(\Omega_2\times[0, T])}^2 \ioe C(\eps_{\init, l}^2\eps + \eps^2).
  \end{equation}
  
  We can now bound $\nabla(\chi\partial^\a U^\eps_B)$. By \eqref{eq:Qconv}, we have
  $$\|\nabla(\chi\partial^\a Q^\eps)\|_{\fsL^2(\Omega_2\times[0, T])}^2 \ioe C(\eps_{\init, l}^2\eps + \eps^2).$$ Using the previous computations we can similarly bound
  \begin{equation*}
    \|\nabla(\chi\partial^\a (H^\eps u ))\|_{\fsL^2(\Omega_2\times[0, T])}^2 \ioe C(\eps_{\init, l}^2\eps + \eps^2 + \|\chi H^\eps \nabla\partial^\a u\|_{\fsL^2(\Omega_2\times[0, T])}^2 + \|u\nabla(\chi\partial^\a H^\eps)\|_{\fsL^2(\Omega_2\times[0, T])}^2).
  \end{equation*}
  
  As previously, a Sobolev injection yields that $\nabla\partial^\a u \in \fsL^\infty(\Omega_2\times[0, T])$, thus we deduce that
  \begin{align}\label{eq:boundHu}
    \|\nabla(\chi\partial^\a (H^\eps u ))\|_{\fsL^2(\Omega_2\times[0, T])}^2
    &\ioe C(\eps_{\init, l}^2\eps + \eps^2 +  \|\nabla(\chi\partial^\a H^\eps)\|_{\fsL^2(\Omega_2\times[0, T])}^2 \|u\|_{\fsL^\infty(\Omega_2\times[0, T])}^2)\nonumber\\
    &\ioe C(\eps_{\init, l}^2\eps + \eps^2 +  \|\nabla(\chi\partial^\a V^\eps)\|_{\fsL^2(\Omega_2\times[0, T])}^2 ).
  \end{align}
    
  Similarly, we have by a Sobolev injection that $\nabla^{\otimes k} \tilde h^\eps \in\fsL^\infty(\Omega_2\times[0, T])$ for any $k>0$, thus by the recursion hypothesis and Leibnitz formula, we obtain
  $$ \|\nabla(\chi\partial^\a (U^\eps \tilde{h}^\eps ))\|_{\fsL^2(\Omega_2\times[0, T])}^2 \ioe C(\eps_{\init, l}^2\eps + \eps^2 +  \|\nabla(\chi\partial^\a U^\eps)\|_{\fsL^2(\Omega_2\times[0, T])}^2).$$

  Finally, we have proven that
  \begin{equation}\label{eq:UBlp1}
    \|\nabla(\chi\partial^\a U^\eps_B)\|_{\fsL^2(\Omega_2\times[0, T])}^2 \lesssim (\eps_{\init, l}^2\eps + \eps^2) + \|\nabla(\chi\partial^\a U^\eps)\|_{\fsL^2(\Omega_2\times[0, T])}^2 + \|\nabla(\chi\partial^\a V^\eps)\|_{\fsL^2(\Omega_2\times[0, T])}^2.
  \end{equation}

  We can now deal with the last term
  \begin{align*}
    SI_{31}
     & = - \int_0^t\int_\Omega \nabla(\chi\partial^\a U^\eps)\nabla(\chi\partial^\a (Q^\eps + H^\eps u + h(v^\eps) U^\eps)) \\
     & = - \int_0^t\int_\Omega h(v^\eps)|\nabla(\chi\partial^\a U^\eps)|^2 - \int_0^t\int_\Omega \nabla(\chi\partial^\a U^\eps)\nabla(\chi\partial^\a (Q^\eps + H^\eps u))\\
     & - \sum_{\beta + \gamma =\alpha \text{ and } \beta > 0}\binom{\a}{\beta}\int_0^t\int_\Omega \nabla(\chi\partial^\a U^\eps)\nabla(\chi\partial^\beta \tilde{h}^\eps\partial^\gamma U^\eps)- \int_0^t\int_\Omega \nabla(\chi\partial^\a U^\eps)(\nabla\tilde{h}^\eps)\chi\partial^\a U^\eps.
  \end{align*}

  We first notice that $- \int_0^t\int_\Omega h(v^\eps)|\nabla(\chi\partial^\a U^\eps)|^2\ioe 0$, then one has thanks to \eqref{eq:Qconv} and \eqref{eq:boundHu}
  \begin{align*}
    - \int_0^t\int_\Omega \nabla(\chi\partial^\a U^\eps)\nabla(\chi\partial^\a (Q^\eps + H^\eps u))
    &\ioe \eta\|\nabla(\chi\partial^\a U^\eps)\|_{\fsL^2(\Omega_2\times[0, T])}^2 \\
     &  + C_\eta\left(\eps_{\init, l}^2\eps + \eps^2 + \|\nabla(\chi\partial^\a V^\eps)\|_{\fsL^2(\Omega_2\times[0, T])}^2\right).
  \end{align*}

  We notice that $\tilde{h}^\eps \in\fsL^\infty([0, T], \fsH^{k+2}(\Omega_2))\hookrightarrow\fsL^\infty([0, T], \fsW^{k, \infty}(\Omega_2))$ for all $k\soe 0$ in dimension $d\ioe 3$. Hence, thanks to the induction hypothesis \eqref{eq:HRconv}, we deduce that for any $\beta, \gamma$ such that $\beta + \gamma = \a$ and $\beta >0$ we have
  \begin{align*}
    -\int_0^t\int_\Omega \nabla(\chi\partial^\a U^\eps)\nabla(\chi\partial^\beta \tilde{h}^\eps\partial^\gamma U^\eps) \ioe \eta\|\nabla(\chi\partial^\a U^\eps)\|_{\fsL^2(\Omega_2\times[0, T])}^2 + C_\eta\left(\eps_{\init, l}^2\eps + \eps^2\right).
  \end{align*}

  Similarly, we obtain
  \begin{align*}
    -\int_0^t\int_\Omega \nabla(\chi\partial^\a U^\eps)(\nabla\tilde{h}^\eps)\chi\partial^\a U^\eps\ioe \eta\|\nabla(\chi\partial^\a U^\eps)\|_{\fsL^2(\Omega_2\times[0, T])}^2 + C_\eta\left(\eps_{\init, l}^2\eps + \eps^2\right).
  \end{align*}

  Finally, using \eqref{eq:UBl} and \eqref{eq:UBlp1} in the bounds obtained for $I_{31}, I_{32}, I_{33}, I_{34}$, we have shown that
  \begin{equation*}
    I_3\ioe \eta\|\nabla(\chi\partial^\a U^\eps)\|_{\fsL^2(\Omega_2\times[0, T])}^2 +  C_\eta(\eps_{\init, l}^2\eps + \eps^2 + \|\nabla(\chi\partial^\a V^\eps)\|_{\fsL^2(\Omega_2\times[0, T])}^2).
  \end{equation*}

  For $I_4$, we first notice that
  \begin{equation*}
    F^\eps_u = r_u U^\eps- d_{11} U^\eps (u + u^\eps) - d_{12}uV^\eps -d_{12} U^\eps v^\eps.
  \end{equation*}

  We use Leibnitz formula to get $\partial^\a (U^\eps (u + u^\eps)) = \sum_{\gamma + \beta = \a}\binom{\a}{\beta}\partial^\gamma U^\eps\partial^\beta(u + u^\eps)$. By the result of \cref{thm:SKTeps-regu}, we have $u, u^\eps, v^\eps \in\fsL^\infty([0, T], \fsH^{l+2}(\Omega_2))$ and thus by a Sobolev injection (in dimension $d\ioe 3$), we have a uniform bound $\partial^\beta u, \partial^\beta u^\eps, \partial^\beta v^\eps \in\fsL^\infty(\Omega_2\times[0, T])$.

  We can similarly bound the other terms to deduce that
  \begin{align}\label{eq:boundF}
    \|\chi\partial^\a F^\eps_u\|_{\fsL^2(\Omega_2\times[0, T])}^2
     & \lesssim \sum_{\beta \ioe \alpha }\left(\|\chi\partial^\beta U^\eps\|_{\fsL^2(\Omega_2\times[0, T])}^2 + \|\chi\partial^\beta V^\eps\|_{\fsL^2(\Omega_2\times[0, T])}^2\right) \nonumber\\
     & \lesssim C(\eps_{\init, l}^2\eps + \eps^2).
  \end{align}
  Using again the induction hypothesis, we deduce that
  \begin{equation*}
    I_4 \ioe C(\eps_{\init, l}^2\eps + \eps^2).
  \end{equation*}

  If we gather all our estimates and choose $\eta$ small enough, we see that
  \begin{align}\label{eq:finalUa}
    &\int_\Omega  |\chi\partial^\a U^\eps|^2(t) + \int_0^T\int_\Omega |\nabla(\chi\partial^\a U^\eps)|^2\\
    &\lesssim \int_\Omega|\chi\partial^\a U^\eps|^2(0) + \eps_{\init, l}^2\eps + \eps^2 + \|\nabla(\chi\partial^\a V^\eps)\|_{\fsL^2(\Omega_2\times[0, T])}^2.\nonumber
  \end{align}

  We now multiply the second part of \eqref{eq:a} by $\chi^2 \partial^\a V^\eps$ to obtain that
  \begin{equation*}
    \int_\Omega |\chi\partial^\a V^\eps|(t)^2 = \int_\Omega |\chi\partial^\a V^\eps|(0)^2 + d_v \int_0^t\int_\Omega \chi^2 \partial^\a V^\eps \partial^\a \Delta V^\eps +  \int_0^t\int_\Omega\chi^2\partial^\a V^\eps\partial^\a F_v^\eps.
  \end{equation*}

  Similar computations as in \eqref{eq:boundF} show that $\|\chi\partial^\a F^\eps_v\|_{\fsL^2(\Omega_2\times[0, T])}^2 \lesssim C(\eps_{\init, l}^2\eps + \eps^2)$ and thus we get
  \begin{equation*}
    \int_0^T\int_\Omega\chi^2\partial^\a V^\eps\partial^\a F_v^\eps \ioe C(\eps_{\init, l}^2\eps + \eps^2).
  \end{equation*}

  Then, one notices again that
  \begin{align*}
    \int_\Omega \chi^2 \partial^\a V^\eps \partial^\a \Delta V^\eps
    & = - \int_\Omega |\nabla(\chi\partial^\a V^\eps)|^2\\
    & +\int_\Omega \nabla(\chi \partial^\a V^\eps)(\nabla\chi)\partial^\a V^\eps - \int_\Omega (\nabla\chi)\partial^\a V^\eps\left(\nabla (\chi \partial^\a V^\eps) - (\nabla\chi)\partial^\a V^\eps\right).
  \end{align*}

  Following similar computations as previously, one obtains
  \begin{align*}
    &\int_0^T\int_\Omega \nabla(\chi \partial^\a V^\eps)(\nabla\chi)\partial^\a V^\eps - \int_0^T\int_\Omega (\nabla\chi)\partial^\a V^\eps\left(\nabla (\chi \partial^\a V^\eps) + (\nabla\chi)\partial^\a V^\eps\right)\\
    &\ioe \eta\|\nabla(\chi \partial^\a V^\eps)\|^2_{\fsL^2(\Omega_t)} + C_\eta (\eps_\init^2\eps + \eps^2).
  \end{align*}

  Hence by choosing $\eta$ small enough, we obtain for some constant $C>0$ that
  \begin{equation}\label{eq:finalV}
    \int_\Omega |\chi\partial^\a V^\eps(t)|^2 + \int_0^t\int_\Omega |\nabla(\chi \partial^\a V^\eps)|^2
    \ioe C\left(\int_\Omega |\chi\partial^\a V^\eps|(0)^2 + \eps_{\init, l}^2\eps + \eps^2\right).
  \end{equation}

  As previously, using \eqref{eq:finalUa} and \eqref{eq:finalV} and recalling that $V^\eps(0, \cdot) = 0$ and $U^\eps(0, \cdot) = 0$, we obtain \eqref{eq:HRconv1} and \eqref{eq:HRconv1infinity}, which enables to finish the induction and concludes the proof of \cref{thm:conv2}.
\end{proof}

\section{Initial layer}\label{sec:initial}
In this section, we give a proof of \cref{thm:initiallayer}.

\begin{proof}[Proof of \cref{thm:initiallayer}]
  We first start with the initial layer in the whole domain $\Omega$. We recall the computations in the beginning of the proof of \cref{thm:convlemma}
\begin{align*}
    \partial_t Q^\eps =: - \frac{S}{\eps}Q^\eps + R^\eps,
  \end{align*}
  where $R^\eps := \partial_t \tilde k^\eps u_B^\eps +  (d_A + d_B) \tilde k^\eps \Delta  u_B^\eps +\tilde k^\eps f_u^\eps u_B^\eps - \partial_t \tilde h^\eps u_A^\eps - d_A\tilde h^\eps \Delta u_A^\eps - \tilde h^\eps f_u^\eps u_A^\eps $ is bounded in $\fsL^2([0, T], \fsL^{2}(\Omega))$ (cf. \eqref{eq:initial-estimates-uLq}-\eqref{eq:initial-estimates-vinf} and \cref{thm:regu1}).\medskip

  We now define for $0 < t < T$, $\tilde{Q}^\eps(t, \cdot) := Q^\eps(t, \cdot) - e^{-\frac{St}{\eps}}Q^\eps(0, \cdot)$. An immediate computation shows that
  \begin{equation}\label{eq:boundtQ}
    \partial_t \tilde{Q}^\eps + \frac{S}{\eps}\tilde{Q}^\eps = R^\eps,
  \end{equation}
  with $\tilde{Q}^\eps(0, \cdot) = 0$.

  Let $0< t < T$, we multiply the previous expression by $\tilde{Q}^\eps$ and integrate over $\Omega\times[0, t]$ to deduce (after the use of Young's inequality)
  \begin{align*}
    \frac12\int_\Omega |\tilde{Q}^\eps(t)|^2 + \frac{S}{\eps}\int_0^t\int_\Omega |\tilde{Q}^\eps|^2
    &= \int_0^t\int_\Omega \tilde{Q}^\eps R^\eps\\
    &\ioe \frac{S}{2\eps}\int_0^t\int_\Omega |\tilde{Q}^\eps|^2 + C\eps \int_0^t\int_\Omega |R^\eps|^2 \\
    &\ioe \frac{S}{2\eps}\int_0^t\int_\Omega |\tilde{Q}^\eps|^2 + C\eps.
  \end{align*}

  Hence, we obtain
  \begin{align*}
    \int_\Omega |\tilde{Q}^\eps(t)|^2 \ioe C\eps.
  \end{align*}

  We then have:
  \begin{equation*}
    \|Q^\eps(t)\|^2_{\fsL^2(\Omega)} \ioe 2\|\tilde{Q}^\eps(t)\|^2_{\fsL^2(\Omega)} + 2\|e^{-\frac{St}{\eps}}Q_\init\|^2_{\fsL^2(\Omega)} \ioe C(\eps + e^{-\frac{2St}{\eps}}).
  \end{equation*}
  With $t_\eps := \frac{\eps|\ln\eps|}{2S}$ we obtain the desired estimate
  \begin{equation*}
    \|Q^\eps(t_\eps)\|^2_{\fsL^2(\Omega)} \ioe C\eps.
  \end{equation*}

  We now consider the initial layer in a subdomain $\tilde\Omega\Subset\Omega$, and we make the assumptions of the second part of \cref{thm:initiallayer}. Let $\tilde\Omega\Subset\hat\Omega\Subset\Omega$ and let $\chi$ be a smooth cutoff function between $\tilde\Omega$ and $\hat\Omega$. Thanks to \cref{thm:SKTeps-regu} (applied with $l=2$), we deduce that $u_C^\eps \in\fsL^\infty([0, T], \fsH^{2}(\hat\Omega))$. Thus, we have $(d_A + d_B) \tilde k^\eps \Delta  u_B^\eps +\tilde k^\eps f_u^\eps u_B^\eps- d_A\tilde h^\eps \Delta u_A^\eps - \tilde h^\eps f_u^\eps u_A^\eps \in\fsL^\infty([0, T], \fsL^{2}(\hat\Omega))$.\medskip

  We now perform similar computations as those in \eqref{eq:boundvl}. Thanks to \eqref{eq:SKT}, we deduce that
  \begin{equation*}
    \partial_t(\Delta v^\eps) - d_v\Delta (\Delta v^\eps) = \Delta (f_v(u^\eps, v^\eps)v^\eps).
  \end{equation*}

  Hence, we obtain
  \begin{align*}
    \partial_t(\chi\Delta v^\eps) - d_v\Delta (\chi\Delta v^\eps)
    &= \chi\Delta (f_v(u^\eps, v^\eps)v^\eps) - d_v \Delta v^\eps\Delta \chi - 2d_v\nabla\Delta v^\eps\cdot \nabla \chi\\
    &=: T^\eps.
  \end{align*}

  Using again \cref{thm:SKTeps-regu}, we deduce that $T^\eps \in\fsL^2(\Omega_T)$. Indeed, concerning the term $\Delta(u^\eps v^\eps) = u^\eps\Delta v^\eps + v^\eps\Delta u^\eps + 2 \nabla u^\eps\cdot\nabla v^\eps$, by \cref{thm:SKTeps-regu} (we use the first part of the Proposition), we have $\Delta u^\eps, \nabla u^\eps \in \fsL^{2}(\Omega_T)$ and by \eqref{eq:initial-estimates-vinf}, we have $v^\eps, \nabla v^\eps \in\fsL^\infty(\Omega_T)$. Then, one has $u^\eps, \Delta v^\eps \in \fsL^{q}(\Omega_T)$ for all $q<\infty$ by \eqref{eq:initial-estimates-uLq} and \eqref{eq:initial-estimates-nablau}, which suffices to conclude that $\Delta(u^\eps v^\eps) \in\fsL^2(\Omega_T)$. The other terms in $\Delta (f_v(u^\eps, v^\eps)v^\eps)$ can be treated in the same way. Similarly, by \cref{thm:SKTeps-regu} (we apply it with $l=2$), we have $\Delta v^\eps, \nabla\Delta v^\eps\in\fsL^{\frac{10}{3}}(\hat\Omega\times[0, T])$. We now multiply the previous equation by $\chi\Delta v^\eps$ and integrate over $\Omega\times[0, t]$ to deduce that (after standard manipulations)
  \begin{equation*}
    \frac{1}{2}\int_\Omega |\chi\Delta v^\eps|^2 + \frac{d_v}{2}\int_0^t\int_\Omega |\nabla(\chi\Delta v^\eps)|^2 \lesssim \int_0^t\int_\Omega (T^\eps)^2.
  \end{equation*}
  
  Thus, we deduce that $\chi \Delta v^\eps \in\fsL^\infty([0, T], \fsL^{2}(\Omega))$. We can now consider $\partial_t \tilde k^\eps u_B^\eps = u_B^\eps k'(v^\eps) (d_v \Delta v^\eps + f_v v^\eps)$. Since $u_B^\eps\in\fsL^\infty([0, T], \fsH^{2}(\hat\Omega))$, a Sobolev injection in dimension $d\ioe 3$, yields that $u_B^\eps \in\fsL^\infty(\hat\Omega\times[0, T])$, thus $\partial_t \tilde k^\eps u_B^\eps \in\fsL^\infty([0, T], \fsL^{2}(\hat\Omega))$. Thus we deduce that $\chi R^\eps$ is bounded in $\fsL^\infty([0, T], \fsL^{2}(\Omega))$. \medskip
  
  We multiply again \eqref{eq:boundtQ} by $\chi^2\tilde{Q}^\eps$ and integrate over $\Omega\times[0, t]$. Using that $\tilde{Q}^\eps(0, \cdot) = 0$, we obtain 
  \begin{align*}
    \frac12\int_\Omega |\chi \tilde{Q}^\eps(t)|^2 + \frac{S}{\eps}\int_0^t\int_\Omega |\chi \tilde{Q}^\eps|^2
    &\ioe \int_0^t\int_\Omega \chi^2 \tilde{Q}^\eps R^\eps\\
    &\ioe \frac{S}{2\eps}\int_0^t\int_\Omega |\chi \tilde{Q}^\eps|^2 + C\eps \int_0^t\int_\Omega |\chi R^\eps|^2 \\
    &\ioe \frac{S}{2\eps}\int_0^t\int_\Omega |\chi \tilde{Q}^\eps|^2 + C\eps t\|\chi R^\eps\|^2_{\fsL^\infty([0, T], \fsL^2(\Omega))}.
  \end{align*}

  Hence, we deduce that
  \begin{equation*}
    \|\chi \tilde{Q}^\eps(t)\|^2_{\fsL^2(\Omega)} \ioe C\eps t.
  \end{equation*}

  We then have:
  \begin{equation*}
    \|\chi Q^\eps(t)\|^2_{\fsL^2(\Omega)} \ioe 2\|\chi \tilde{Q}^\eps(t)\|^2_{\fsL^2(\Omega)} + 2\|\chi e^{-\frac{St}{\eps}}Q_\init\|^2_{\fsL^2(\Omega)} \ioe C(\eps t + e^{-\frac{2St}{\eps}}).
  \end{equation*}
  With $t'_\eps := \frac{\eps|\ln\eps^2|}{2S}$, we obtain the desired estimate
  \begin{equation*}
    \|Q^\eps(t'_\eps)\|^2_{\fsL^2(\tilde \Omega)} \ioe C\eps^2 \lvert\ln\eps\rvert.
  \end{equation*}
\end{proof}

\subsection*{Acknowledgments}
Part of this article has been produced during a master internship, the author is deeply thankful to his advisors Laurent Desvillettes and Helge Dietert for many fruitful discussions and for having proofread earlier versions of this article.

\printbibliography
\end{document}